\theoremstyle{plain}
\newtheorem{thm}{Theorem}[section]
\newaliascnt{lem}{thm}
\newtheorem{lem}[lem]{Lemma}
\newaliascnt{pro}{thm}
\newtheorem{pro}[pro]{Proposition}
\newaliascnt{cor}{thm}
\newtheorem{cor}[cor]{Corollary}
\newaliascnt{que}{thm}
\newtheorem{que}[que]{Question}
\newaliascnt{rem}{thm}
\newaliascnt{con}{thm}
\newtheorem{con}[con]{Conjecture}
\newaliascnt{clm}{thm}
\newtheorem*{clmnn}{Claim}
\newtheorem{thm0}{Theorem}
\theoremstyle{definition}
\newaliascnt{defi}{thm}
\newtheorem{defi}[defi]{Definition}
\newaliascnt{exm}{thm}
\newtheorem{exm}[exm]{Example}
\newcommand{\mcm}[3]{\newcommand{#1}[#2]{{\ensuremath{#3}}}} 
\mcm{\tuple}{1}{\langle #1 \rangle}
\mcm{\name}{1}{\ulcorner #1 \urcorner}
\mcm{\Fbb}{0}{\mathbb{F}}
\mcm{\Nbb}{0}{\mathbb{N}}
\mcm{\Zbb}{0}{\mathbb{Z}}
\mcm{\Rbb}{0}{\mathbb{R}}
\mcm{\Cbb}{0}{\mathbb{C}}
\mcm{\Acal}{0}{\cal A}
\mcm{\Bcal}{0}{\cal B}
\mcm{\Ccal}{0}{\cal C}
\mcm{\Dcal}{0}{\cal D}
\mcm{\Ecal}{0}{\cal E}
\mcm{\Fcal}{0}{\cal F}
\mcm{\Gcal}{0}{\cal G}
\mcm{\Hcal}{0}{\cal H}
\mcm{\Ical}{0}{\cal I}
\mcm{\Lcal}{0}{\cal L}
\mcm{\Mcal}{0}{\cal M}
\mcm{\Wcal}{0}{\cal W}
\mcm{\Ncal}{0}{\cal N}
\mcm{\Pcal}{0}{{\cal P}}
\mcm{\Qcal}{0}{{\cal Q}}
\mcm{\Rcal}{0}{{\cal R}}
\mcm{\Scal}{0}{{\cal S}}
\mcm{\Tcal}{0}{{\cal T}}
\mcm{\Ucal}{0}{{\cal U}}
\mcm{\Vcal}{0}{{\cal V}}
\mcm{\Mfrak}{0}{\mathfrak M}
\mcm{\restric}{0}{\upharpoonright}
\mcm{\upset}{0}{\uparrow}
\mcm{\onto}{0}{\twoheadrightarrow}
\mcm{\smallNbb}{0}{{\small \mathbb{N}}}
\DeclareMathOperator{\preop}{op}
\mcm{\op}{0}{^{\preop}}
\renewcommand{\-}{\setminus}
\newcommand{\Ter}{\mbox{\rm Ter}}
\newcommand{\ter}{\mbox{\rm Ter}}
\newcommand{\Ini}{\mbox{\rm Ini}}
\newcommand{\ini}{\mbox{\rm Ini}}
\newcommand{\cev}[1]{\reflectbox{\ensuremath{\vec{\reflectbox{\ensuremath{#1}}}}}}
\begin{document}
\author{S. Hadi Afzali Borujeni \and Hiu Fai Law \and Malte Müller}
\title{Infinite Gammoids: Minors and Duality}
\date{\today}
\maketitle
\setcounter{section}{-1}

\begin{abstract}
This sequel to \cite{ALM} considers minors and duals of infinite gammoids. We prove that a class of gammoids definable by digraphs not containing a certain type of substructure, called an outgoing comb, is minor-closed. Also, we prove that finite-rank minors of gammoids are gammoids. 
Furthermore, the topological gammoids \cite{Car14+} are proved to coincide, as matroids, with the finitary gammoids. A corollary is that topological gammoids are minor-closed. 

It is a well-known fact that the dual of any finite strict gammoid is a transversal matroid. 
The class of alternating-comb-free strict gammoids, introduced in \cite{ALM}, contains examples which are not dual to any transversal matroid.
However, we describe the duals of matroids in this class as a natural extension of transversal matroids. 
While finite gammoids are closed under duality,
we construct a strict gammoid that is not dual to any gammoid. 
\end{abstract}

\section{Introduction}
Infinite matroid theory has recently seen a surge of activity (e.g. \cite{AB12}, \cite{BC12}, \cite{BW12}), after Bruhn et al \cite{BDKPW} found axiomatizations of infinite matroids with duality, solving a long-standing problem of Rado \cite{Rad66}. As part of this re-launch of the subject, we initiated an investigation of infinite gammoids in \cite{ALM}.

A {\em dimaze} is a digraph equipped with a specific set of sinks, the (set of) {\em exits}. 
A dimaze {\em contains} another dimaze, if, in addition to digraph containment, the exits of the former include those of the latter. 
In the context of digraphs, any path or ray (i.e.~one-way infinite path) is forward oriented. 
A {\em linking fan} is a dimaze obtained from the union of infinitely many disjoint non-trivial paths by identifying their first vertices and declaring their last vertices as the exits. 
An {\em outgoing comb} is a dimaze obtained from a ray, called the \emph{spine}, by adding infinitely many non-trivial disjoint paths, that meet the ray precisely at their initial vertices, and declaring the sinks of the resulting digraph to be the exits.
     
A {\em strict gammoid} is a matroid isomorphic to the one defined on the vertex set of (the digraph of) a dimaze, whose independent sets are those subsets that are linkable to the exits by a set of disjoint paths (\cite{Mas72, Per68}). Any dimaze defining a given strict gammoid is a \emph{presentation} of that strict gammoid.   
A \emph{gammoid} is a matroid restriction of a strict gammoid; so, by definition, the class of gammoids is closed under matroid deletion.

A pleasant property of the class of finite gammoids is that it is also closed under matroid contractions, and hence, under taking minors. In contrast, whether the class of all gammoids, possibly infinite, is minor-closed is an open question. 
Our aim in the first part of this paper is to begin to address this and related questions.

A standard proof of the fact that the finite gammoids are minor-closed as a class of matroids proceeds
via duality. 
The proof of this fact can be extended to infinite dimazes whose underlying (undirected) graph does not contain any ray, but it breaks down when rays are allowed. However, by developing an infinite version of a construction in \cite{AR10}, we are able to prove the following (\autoref{thm:gammoidOCMinors}).

\begin{thm0}\label{thm:OutgoingCombFree}
The class of gammoids that admit a presentation not containing any outgoing comb is minor-closed. 
\end{thm0}

If we do allow outgoing combs, combining the tools developed to prove the above theorem with a proof of Pym's linkage theorem \cite{Pym69}, we can still establish the following  (\autoref{thm:gammoidFiniteMinors}).

\begin{thm0} 
Any finite-rank minor of an infinite gammoid is a gammoid.
\end{thm0}

Outgoing combs first appeared in \cite{Car14+} where Carmesin used a topological approach to extend finite gammoids to the infinite case.
Given a dimaze $(D, B_0)$, call a set $I$ of vertices {\em topologically linkable} if there are disjoint rays or paths starting from $I$, such that the rays are the spines of outgoing combs (contained in $(D, B_0)$), and the paths end at exits or at centres of linking fans.\footnote{With a suitable topology on the undirected underlying graph, this notion of linkability has a topological interpretation in which these sets of rays and paths are precisely those that link $I$ to the topological closure of the set of the exits \cite{Car14+}. } 

		Carmesin proved that the topologically linkable sets of a dimaze form the independent sets of a finitary\footnote{
	Finitary means that a set is independent if and only if all of its finite subsets are.} matroid on the vertex set of the digraph. Any matroid isomorphic to such a matroid is called a \emph{strict topological gammoid}. A matroid restriction of a strict topological gammoid is called a \emph{topological gammoid}.   
Making use of this result together with Theorem \autoref{thm:OutgoingCombFree}, we prove the following
(\autoref{thm:characterizationFinitaryGammoids} and \autoref{thm:Top-minor-closed}).

\begin{thm0}
A matroid is a topological gammoid if and only if it is a finitary gammoid. Moreover, the class of topological gammoids is minor-closed. 
\end{thm0}

\medskip

In the second part of the paper, we turn to duality. Recall that a \emph{transversal matroid} can be defined by taking a fixed vertex class of a bipartite graph as the ground set and its matchable subsets as the independent sets. Ingleton and Piff \cite{IP73} proved constructively that finite strict gammoids and finite transversal matroids are dual to each other, a key fact to the result that the class of finite gammoids is closed under duality. 
In contrast, an infinite strict gammoid need not be dual to a transversal matroid, and vice versa (Examples \ref{thm:ARnotCotransversal} and \ref{thm:transversalNotDSG}). Despite these examples, it might still be possible that the class of infinite gammoids is closed under duality. However, we will see in \autoref{sec:discussionDuality} that there is a gammoid, which is not dual to any gammoid.

In \cite{ALM}, we introduced infinite strict gammoids that can be defined by dimazes that do not contain a type of dimaze, called an \emph{alternating comb} (see \autoref{sec:linkabilitySystem}). 
The class formed by these gammoids is rich in the sense that it contains, for example, highly connected gammoids which are neither nearly finitary nor dual to such matroids (see \cite{ALM} for the definitions and the examples). We remark that constructing a gammoid 
outside of this class is non-trivial \cite{ALM}.
There, it was also proved that,
for a dimaze, containing an alternating comb is the unique obstruction to a characterization of maximally linkable sets; in a finite dimaze, a set is maximally linkable if and only if it is linkable onto the exits.

In \autoref{sec:gammoidsTransversalDuality}, we aim to describe the dual of the strict gammoids that admit a presentation not containing any alternating comb. 
It turns out that there exists a strict gammoid in this class that is not dual to any transversal matroid. For this reason, we first extend transversal matroids to \emph{path-transversal matroids.} 
Then, using the above characterization of maximal independent sets, we prove the following (\autoref{thm:legalDuality}).

\begin{thm0}
A strict gammoid that admits a presentation not containing any alternating comb is dual to a path-transversal matroid. 
\end{thm0}

This theorem is sharp in the sense that there is a strict gammoid that is not dual to any path-transversal matroid (\autoref{thm:infinitetree-legal}). We remark that the theorem is used in \cite{ALM14} to characterize cofinitary transversal matroids and cofinitary strict gammoids. 

\section{Preliminaries}\label{sec:pre}
In this paper, digraphs do not have any loops or parallel edges.
We collect definitions, basic results and examples. For notions not found here, we refer to \cite{BDKPW} and \cite{Oxl92} for matroid theory, and \cite{Die10} for graph theory.

\subsection{Infinite matroids}
Given a set $E$ and a family of subsets $\Ical\subseteq 2^E$, let $\Ical^{\max}$ denote the maximal elements of $\Ical$ with respect to set inclusion. For a set $I\subseteq E$ and $x\in E$, we write $x, I+x, I-x$ for $\{x\}, I\cup \{x\}$ and $I\-\{x\}$ respectively. 
\begin{defi}\cite{BDKPW}
A \emph{matroid} $M$ is a pair $(E, \Ical)$, where $E$ is a set and $\Ical\subseteq 2^E$, which satisfies the following:
\begin{samepage}
\begin{enumerate}
\item[(I1)] $\emptyset\in \Ical$.
\item[(I2)] If $I\subseteq I'$ and $I'\in\Ical$, then $I\in \Ical$.
\item[(I3)] For all $I\in \Ical \setminus\Ical^{\max}$ and $I'\in \Ical^{\max}$, there is an $x\in I'\setminus I$ such that $I+x\in \Ical$.
\item[(IM)] Whenever $I\in \Ical$ and $I\subseteq X\subseteq E$, the set $\{I'\in \Ical : I\subseteq I'\subseteq X\}$ has a maximal element. 
\end{enumerate}
\end{samepage}
\end{defi}

For $M=(E, \Ical)$ a matroid, $E$ is the ground set, a subset of which is independent if it is in $\Ical$; otherwise dependent. A base is a maximal independent set, while a circuit is a minimal dependent set. Let $\Ccal(M)$ be the set of circuits of $M$. 
A circuit of size one is called a loop. 
The dual matroid $M^*$ of $M$ has as bases precisely the complements of bases of $M$. 
We usually identify a matroid with its set of independent sets, and so write an independent set $I$ is in $M$. 

Let $M=(E, \Ical)$ be a set system. The set $\Ical^{\rm fin}$ consists of the sets which have all their finite subsets in $\Ical$. $M^{\rm fin}=(E, \Ical^{\rm fin})$ is called {\em finitarisation} of $M$. $M$ is called {\em finitary} if $M = M^{\rm fin}$. Applying Zorn's Lemma one see that finitary set systems always satisfy (IM). 
\medskip

Minors are matroids obtained by deleting and contracting disjoint subsets of the ground set. We write $M.X$ for the matroid obtained by contracting the complement of a set $X\subseteq E$. 
The following standard fact simplifies investigations of minors. 

\begin{lem} \label{thm:wlogContractIndependent} 
Let $M$ be a matroid, $C, D \subseteq E$ with $C \cap D = \emptyset$ and let $M' := M / C \setminus D$ be a minor.
Then there is an independent set $S$ and a coindependent set $R$ such that $M' = M / S \setminus R$.
\end{lem}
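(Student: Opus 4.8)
The plan is to peel off the contraction and the deletion one at a time: first I would replace $C$ by an independent set of $M$ at the cost of enlarging $D$, then, inside the resulting minor, replace the enlarged $D$ by a coindependent set at the cost of enlarging the contracted part, and finally reshuffle the leftover elements so that the two halves partition $C\cup D$. Throughout I would use the standard minor calculus for infinite matroids from \cite{BDKPW}: that $\setminus$ and $/$ commute and may be applied in any order, that $(M/X)^*=M^*\setminus X$ and $(M\setminus X)^*=M^*/X$, that deleting a set of loops coincides with contracting it and dually for coloops, and that for an independent set $S$ of $M$ the independent sets of $M/S$ are exactly the sets $X$, disjoint from $S$, with $X\cup S$ independent in $M$.

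First I would apply (IM) to $\emptyset\subseteq C$ to obtain a maximal independent set $S_0$ of $M\restric C$. Each $e\in C\setminus S_0$ lies in $\operatorname{cl}_M(S_0)$ (as $S_0+e$ is dependent), hence is a loop of $M/S_0$, so $M/C=M/S_0\setminus(C\setminus S_0)$. Writing $D':=(C\setminus S_0)\cup D$, which is disjoint from $S_0$, this gives
\[
M'=M/C\setminus D=M/S_0\setminus D',
\]
with $S_0$ independent in $M$. Next, set $N:=M/S_0$ and apply (IM) inside the matroid $N^*$ to $\emptyset\subseteq D'$, obtaining a maximal independent set $R$ of $N^*\restric D'$; equivalently, $R$ is a maximal coindependent subset of $D'$ in $N$. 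Then every element of $D'\setminus R$ is a loop of $N^*/R=(N\setminus R)^*$, i.e.\ a coloop of $N\setminus R$, so $N\setminus D'=(N\setminus R)\setminus(D'\setminus R)=(N\setminus R)/(D'\setminus R)$. Since $\setminus$ and $/$ commute,
\[
M'=N/(D'\setminus R)\setminus R=M/\bigl(S_0\cup(D'\setminus R)\bigr)\setminus R .
\]

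Finally I would set $S:=S_0\cup(D'\setminus R)$ and keep $R$. Then $S\cap R=\emptyset$ and $S\cup R=S_0\cup D'=C\cup D$, so $M'=M/S\setminus R$ has the correct ground set, and it remains only to verify the two membership claims. The set $D'\setminus R$ consists of coloops of $N\setminus R$, hence is independent in $N\setminus R$, hence in $N=M/S_0$, and therefore $S=S_0\cup(D'\setminus R)$ is independent in $M$. Dually, $R$ is coindependent in $N$, i.e.\ independent in $N^*=M^*\setminus S_0$, hence independent in $M^*$, i.e.\ coindependent in $M$. I expect the only genuine care to lie in this last bookkeeping step — making sure each of $S$ and $R$ has the required (co)independence in $M$ itself and not merely in an intermediate minor — whereas the manipulations with loops, coloops, and duality are routine applications of the infinite minor calculus.
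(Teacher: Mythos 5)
Your argument is correct, and in fact it ends up constructing the same pair as the paper: your $S_0$ is a base of $M|C$, and the elements of $C\setminus S_0$, being loops of $N$ and hence coloops of $N^*$, must all lie in $R$, so $D'\setminus R\subseteq D$ and one can check it is a base of $M.D$ — exactly the paper's $S$. What genuinely differs is the verification. The paper writes down $S$ and $R$ simultaneously and proves $M'=M/S\setminus R$ by a direct chain of equivalences on bases, obtaining coindependence of $R$ from the observation that $R$ is disjoint from a base extending $S$ inside $E\setminus(C\cup D)$. You instead peel off the two operations one at a time: passing from $C$ to $S_0$ turns $C\setminus S_0$ into loops of $M/S_0$, which may be deleted rather than contracted; passing from $D'$ to its maximal coindependent part $R$ turns $D'\setminus R$ into coloops of $N\setminus R$, which may be contracted rather than deleted. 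This buys a cleaner, purely formal derivation of the coindependence of $R$ (via $N^*=M^*\setminus S_0$) and avoids the base computation, at the price of invoking a bit more of the minor calculus for infinite matroids (commutation of $/$ and $\setminus$, the loop/coloop identities, and $(M/X)^*=M^*\setminus X$), all of which are indeed available from \cite{BDKPW}. Both proofs are complete.
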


\begin{proof}
Let $S$ be the union of a base of $M | C$ and a base of $M.D$ and let $R := (C \cup D) \setminus S$. In particular $S$ is independent (by \cite[Corollary 3.6]{BDKPW}). Since $R$ is disjoint from some base extending $S$ in $E\-(C\cup D)$, it is coindependent. In particular, any base of $M/S\- R$ spans $M/S$. 
For a set $B \subseteq E \setminus (C \cup D)$ we have: 
\begin{align*}
& B \in \Bcal(M \- D / C ) \\ 
\Leftrightarrow& B \cup (C \cap S) \in \Bcal(M \setminus D) \\
\Leftrightarrow& B \cup (C \cap S) \cup (D \cap S) \in \Bcal(M) \\
\Leftrightarrow& B \cup S \in \Bcal(M) \\
\Leftrightarrow& B \in \Bcal(M / S) \\
\Leftrightarrow& B \in \Bcal(M / S \setminus R). \qedhere
\end{align*}
\end{proof}

\subsection{Linkability system}\label{sec:linkabilitySystem}
Given a digraph $D$, let $V:=V(D)$ and $B_0\subseteq V$ be a set of sinks. Call the pair $(D,B_0)$ a {\em dimaze}\footnote{Dimaze is short for directed maze.} and $B_0$ the (set of) {\em exits}. Given a (directed) path or ray $P$, $\ini(P)$ and $\Ter(P)$ denote the initial and the terminal vertex (if exists) of $P$, respectively. For a set $\Pcal$ of paths and rays, then $\Ini(\Pcal) = \{ \ini(P) : P \in \Pcal \}$ and $\ter(\Pcal) = \{ \ter(P) : P \in \Pcal \}$. A \emph{linkage} $\Pcal$ is a set of (vertex disjoint) paths ending in $B_0$. A set $A\subseteq V$ is {\em linkable} if there is a linkage 
$\Pcal$ from $A$ to $B$, i.e.~$\Ini(\Pcal)=A$ and $\Ter(\Pcal)\subseteq B$; $\Pcal$ is \emph{onto} $B$ if $\Ter(\Pcal) = B$. 

\begin{defi}
Let $(D, B_0)$ be a dimaze. The pair of $V(D)$ and the set of linkable subsets is denoted by $M_L(D, B_0)$. 
A \emph{strict gammoid} is a matroid isomorphic to $M_L(D, B_0)$ for some $(D, B_0)$. A \emph{gammoid} is a restriction of a strict gammoid. Given a gammoid $M$, $(D, B_0)$ is called a \emph{presentation} of $M$ if $M = M_L(D, B_0)|X$ for some $X\subseteq V(D)$.
\end{defi}

In general, $M_L(D, B_0)$ satisfies (I1), (I2) and (I3) but not (IM); see \cite{ALM}. 

If $D'$ is a subdigraph of $D$ and $B_0'\subseteq B_0$, then $(D, B_0)$ contains $(D', B_0')$ as a \emph{subdimaze}. 
A dimaze $(D', B_0')$ is a \emph{subdivision} of $(D, B_0)$ if it can be obtained from $(D, B_0)$ as follows. We first add an extra vertex $b_0$ and the edges $\{ (b, b_0): b\in B_0\}$ to $D$. Then the edges of this resulting digraph are subdivided to define a digraph $D''$. Set $B_0'$ as the in-neighbourhood of $b_0$ in $D''$ and $D'$ as $D'' - b_0$. 
Note that this defaults to the usual notion of subdivision if $B_0 = \emptyset$. 

The following dimazes play an important role in our investigation. 
An undirected ray is a graph with an infinite vertex set $\{ x_i : i\geq 1\}$ and the edge set $ \{ x_i x_{i+1} : i \geq 1\}$. 
We orient the edges of an undirected ray in different ways to construct three dimazes: 
\begin{enumerate}
\item $R^A$ by orienting $(x_{i+1}, x_i)$ and $(x_{i+1}, x_{i+2})$ for each odd $i\geq 1$ and the set of exits is empty;
\item $R^I$ by orienting $(x_{i+1}, x_i)$ for each $i\geq 1$ and $x_1$ is the only exit;
\item $R^O$ by orienting $(x_i, x_{i+1})$ for each $i\geq 1$ and the set of exits is empty.
\end{enumerate}

A subdivision of $R^A$, $R^I$ and $R^O$ is called \emph{alternating ray}, \emph{incoming ray} and \emph{(outgoing) ray}, respectively. 

Let $Y = \{ y_i : i\geq 1\}$ be a set disjoint from $X$. 
We extend the above types of rays to combs by adding edges (and their terminal vertices) and declaring the resulting sinks to be the exits: 
\begin{enumerate}
\item $C^A$ by adding no edges to $R^A$;
\item $C^I$ by adding the edges $(x_i, y_i)$ to $R^I$ for each $i\geq 2$; 
\item $C^O$ by adding the edges $(x_i, y_i)$ to $R^O$ for each $i\geq 2$.
\end{enumerate}

Furthermore we define the dimaze $F^\infty$ by declaring the sinks of the digraph $(\{v, v_i : i \in \Nbb \}, \{ (v, v_i) : i \in \Nbb \})$ to be the exits. 

Any subdivision of $C^A$, $C^I$, $C^O$ and $F^\infty$ is called \emph{alternating comb}, \emph{incoming comb}, \emph{outgoing comb} and \emph{linking fan}, respectively. 
The subdivided ray in any comb is called the \emph{spine} and the paths to the exits are the \emph{spikes}. 

A dimaze $(D, B_0)$ is called \emph{$\Hcal$-free} for a set $\Hcal$ of dimazes if it does not have a subdimaze isomorphic to a subdivision of an element in $\Hcal$. A (strict) gammoid is called \emph{$\Hcal$-free} if it admits an $\Hcal$-free presentation. 

A main result in \cite{ALM} is the following.  
\begin{thm}
\label{thm:ACfree}
(i) Given a dimaze, the sets linkable onto the exits are maximally linkable if and only if the dimaze is $C^A$-free. 
(ii) Any $C^A$-free dimaze defines a strict gammoid. 
\end{thm}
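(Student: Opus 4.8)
The plan is to treat the two statements separately, proving the biconditional in (i) first and then deducing (ii) from it. Throughout, write $B_0$ for the exits and recall that $M_L(D,B_0)$ already satisfies (I1)--(I3), so ``maximally linkable'' means a maximal linkable set, i.e.\ a base. I will also use that each exit is covered by the trivial one-vertex path on itself, so onto-linkable sets always exist (e.g.\ $B_0$ itself); the content of (i) is therefore about the \emph{maximality} of such sets.

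For the direction of (i) asserting that a dimaze containing an alternating comb has an onto-linkable set that is not maximal, I would argue by an explicit shift. Let $K$ be a subdimaze that is a subdivision of $C^A$, with source corners $s_1,s_2,\dots$ and exit corners $e_1,e_2,\dots$, so that inside $K$ each $s_k$ is joined by internally disjoint paths to $e_k$ on its left and to $e_{k+1}$ on its right, with all $e_k\in B_0$. Take the linkage $\Pcal$ consisting of the trivial paths on $B_0\setminus\{e_k:k\ge1\}$ together with the left-going paths $s_k\to e_k$; then $\Pcal$ is onto $B_0$ and $\Ini(\Pcal)=(B_0\setminus\{e_k:k\ge1\})\cup\{s_k:k\ge1\}$. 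Now shift the $K$-part one step to the right: replace it by the trivial path on $e_1$ together with the right-going paths $s_k\to e_{k+1}$. This is again a valid linkage onto $B_0$ (the source corners have out-edges, hence are not exits, and internal vertices of comb-paths are never exits, so disjointness from the trivial paths is automatic), and its initial set is exactly $\Ini(\Pcal)+e_1$. Since $e_1$ is a sink it does not lie in $\Ini(\Pcal)$, so $\Ini(\Pcal)$ is onto but not maximal.

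The substantial direction of (i) is the converse: if some onto-linkable set $A$ is not maximal, then $(D,B_0)$ contains a subdivision of $C^A$. Fix a linkage $\Pcal$ with $\Ini(\Pcal)=A$ and $\Ter(\Pcal)=B_0$, and, using non-maximality, a linkage $\Qcal$ with $\Ini(\Qcal)=A+z$ for some $z\notin A$. I would trace an alternating walk in which each $\Qcal$-path runs forward from a source to the exit it reaches and each $\Pcal$-path runs backward from that exit to its start in $A$. Concretely, define $f$ on $A\cup\{z\}$ by $f(a)=\ini(P_{\Ter(Q_a)})$, where $Q_a\in\Qcal$ starts at $a$ and $P_b\in\Pcal$ ends at the exit $b$; this is well defined since $\Ter(\Qcal)\subseteq B_0=\Ter(\Pcal)$, and $f$ maps into $A$. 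As $a\mapsto Q_a$, $Q\mapsto\Ter(Q)$ and $b\mapsto\ini(P_b)$ are all injective (paths within a single linkage are disjoint), $f$ is injective; and since $\operatorname{range}(f)\subseteq A$ while $z\notin A$, the forward orbit $a_0:=z,\ a_1=f(z),\ a_2=f(a_1),\dots$ can neither terminate nor cycle. Hence the source corners $a_i$ and the exit corners $b_i:=\Ter(Q_{a_{i-1}})$ are all distinct, and the paths $P_{b_1},Q_{a_1},P_{b_2},Q_{a_2},\dots$ trace the spine $b_1\leftarrow a_1\to b_2\leftarrow a_2\to\cdots$ with exits $b_i\in B_0$, the skeleton of a subdivided $C^A$. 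I expect the main obstacle to be not the existence of this infinite non-repeating skeleton---which the injectivity of $f$ hands us cleanly---but the requirement that the chosen $\Pcal$- and $\Qcal$-paths be pairwise \emph{internally} disjoint, so that their union is genuinely a subdivision of $C^A$: distinct $\Pcal$-paths and distinct $\Qcal$-paths are automatically disjoint, but a $\Pcal$-path may cross a $\Qcal$-path. I would eliminate such crossings by choosing the pair $(\Pcal,\Qcal)$ as economically as possible---with $\Qcal$ agreeing with $\Pcal$ as far as possible, so that any crossing could be rerouted to reduce the discrepancy---and reading off the comb from the resulting clean alternating ray; making this minimisation legitimate in the infinite setting is the delicate technical point.

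Finally, (ii) reduces to verifying (IM), since the remaining axioms hold for every $M_L(D,B_0)$. Given linkable $I$ with $I\subseteq X\subseteq V$, I would produce a maximal element of $\{I':I\subseteq I'\subseteq X,\ I'\text{ linkable}\}$ by a Zorn's-lemma argument applied to linkages whose initial sets lie between $I$ and $X$, taking unions of \emph{paths} (not of sets) at limit stages. Part (i) certifies the outcome: at a maximal linkage the initial set must be onto the exits it can reach, for otherwise the augmenting-walk analysis above would either enlarge the linkage, contradicting maximality, or exhibit a subdivision of $C^A$, contradicting $C^A$-freeness; by (i) such a set is then maximal in its interval. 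The step requiring care---and where $C^A$-freeness is indispensable---is the upper-bound step of the Zorn argument, namely that an ascending chain of linkages admits a linkage as an upper bound rather than escaping to infinity, which is exactly the behaviour an alternating comb would encode.
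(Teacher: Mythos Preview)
The paper does not prove this theorem; it is quoted as ``a main result in \cite{ALM}'' and used as a black box, so there is no proof here to compare against. I can only comment on whether your sketch stands on its own.

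Your argument for the easy direction of (i) is correct and complete. For the converse of (i), your orbit map $f(a)=\ini(P_{\Ter(Q_a)})$ cleanly produces an infinite non-repeating sequence of ``corners'', but you correctly identify the real difficulty: the $\Pcal$-paths and $\Qcal$-paths along the orbit may intersect each other internally, and you do not actually carry out the promised minimisation. In the infinite setting there is no well-ordering of ``number of disagreements'' between $\Pcal$ and $\Qcal$, so the phrase ``as economically as possible'' hides exactly the work that needs to be done. You would need either a careful transfinite or compactness argument to extract an actual subdivision of $C^A$ from the tangled walk, and this is not sketched.

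Your proposal for (ii) has a more serious gap. The Zorn argument you describe is not well-posed: if you order linkages by inclusion of their sets of paths, chains have upper bounds (the union), but a maximal element in that order need not have a maximal \emph{linkable} initial set, because augmenting-walk arguments destroy existing paths rather than add to them. If instead you order linkages by inclusion of their initial sets, then chains have no obvious upper bound, and you say yourself that this ``is the step requiring care'' without supplying one. The appeal to (i) at the end does not help: (i) only tells you that onto-linkable sets are maximal, not that a linkage which is maximal in your poset is onto. The proof of (IM) in \cite{ALM} is genuinely nontrivial and does not proceed by a bare Zorn argument on linkages.
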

In general, a $\Hcal$-free gammoid may admit a presentation that is not $\Hcal$-free (see \autoref{fig:ARIC} for $\Hcal = \{C^A\}$). 
In \cite{ALM} it was shown that the class of $C^A$-free strict gammoids contains interesting examples including highly connected matroids which are not \emph{nearly finitary} (\cite{ACF}) matroids or the dual of such ones. 
This class is also a fruitful source for \emph{wild} matroids (\cite{BC12}). 

\medskip
 Let $(D, B_0)$ be a dimaze and $\Qcal$ a set of disjoint paths or rays (usually a linkage).
A \emph{$\Qcal$-alternating walk} is a sequence $W=w_0e_0w_1e_1 \ldots$
of vertices $w_i$ and distinct edges $e_i$ of $D$ not ending with an edge, such that
every $e_i\in W$ is incident with $w_i$ and $w_{i+1}$, and the following properties hold for each $i \geq 0$ (and $i < n$ in case $W$ is finite, where $w_n$ is the last vertex):

\begin{enumerate}
\item[(W1) \phantomsection \label{aw1}] $e_i=(w_{i+1}, w_i)$ if and only if $e_i\in E(\Qcal)$;
\item[(W2) \phantomsection \label{aw2}] if $w_i=w_j$ for any $j\neq i$, then $w_i\in V(\Qcal)$;
\item[(W3) \phantomsection \label{aw3}] if $w_i\in V(\Qcal)$, then $\{e_{i-1},e_i\} \cap E(\Qcal)\neq \emptyset$ (with $e_{-1}:=e_0$).
\end{enumerate}

Let $\Pcal$ be another set of disjoint paths or rays. A \emph{$\Pcal$-$\Qcal$-alternating walk} is a $\Qcal$-alternating walk whose edges are in $E(\Pcal) \Delta E(\Qcal)$, and such that any interior vertex $w_i$ satisfies
\begin{enumerate}
\item[(W4) \phantomsection \label{aw4}] if $w_i\in V(\Pcal)$, then $\{e_{i-1},e_i\} \cap E(\Pcal)\neq \emptyset$.
\end{enumerate}

Two $\Qcal$-alternating walks $W_1$ and $W_2$ are \emph{disjoint} if they are edge disjoint, 
$V(W_1) \cap V(W_2) \subseteq V(\Qcal)$ and $\ter(W_1) \neq \ter(W_2)$.

Suppose that a dimaze $(D,B_0)$, a set $X\subseteq V$ and a linkage $\Pcal$ from a subset of $X$ to $B_0$ are given. An $X$--$B_0$ (vertex) separator $S$ is a set of vertices such that every path from $X$ to $B_0$ intersects $S$, and $S$ is {\em on} $\Pcal$ if it consists of exactly one vertex from each path in $\Pcal$.

We recall a classical result due to Gr\"unwald \cite{Gru38}, which can be formulated as follows (see also \cite[Lemmas 3.3.2 and 3.3.3]{Die10}).

\begin{lem} \label{thm:AW2Linkage}
Let $(D,B_0)$ be a dimaze, $\Qcal$ a linkage, and $\Ini(\Qcal)\subseteq X\subseteq V$. 
\begin{enumerate}
\item[(i)] If there is a $\Qcal$-alternating walk from $X\- \Ini(\Qcal)$ to $B_0 \- \Ter(\Qcal)$, then there is a linkage $\Qcal'$ with $\Ini(\Qcal)
\subsetneq \Ini(\Qcal')\subseteq X$ onto $\Ter(\Qcal)\subsetneq \Ter(\Qcal')\subseteq B_0$.

\item[(ii)] If there is not any $\Qcal$-alternating walk from $X\- \Ini(\Qcal)$ to $B_0 \- \Ter(\Qcal)$, then there is a $X$--$B_0$ separator on $\Qcal$.
\end{enumerate}
\end{lem}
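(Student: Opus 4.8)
The plan is to read the lemma as the augmenting-walk dichotomy behind Menger's theorem, carried over to directed linkages, where a $\Qcal$-alternating walk plays the role of an augmenting path: part~(i) is the augmentation step, and part~(ii) extracts a cut from the absence of such a walk. I would prove them separately. Throughout I use that a walk ``from $X\-\Ini(\Qcal)$ to $B_0\-\Ter(\Qcal)$'' is finite, with a last vertex $w_n\in B_0\-\Ter(\Qcal)$ and finitely many (distinct) edges; hence it meets only finitely many paths of $\Qcal$, which confines all rerouting to a finite region and prevents the infinite ambient digraph from interfering.

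For (i), write $W=w_0e_0w_1\cdots e_{n-1}w_n$ with $w_0\in X\-\Ini(\Qcal)$, and consider the spanning subdigraph $\Qcal'$ with edge set $E(\Qcal)\,\Delta\,E(W)$. By (W1) the edges $W$ traverses backward are precisely its edges lying in $E(\Qcal)$, so the symmetric difference deletes exactly the backward edges of $W$ and adjoins its forward edges, none of which belonged to $\Qcal$. I would then check that every vertex has in- and out-degree at most one in $\Qcal'$; here (W3) is what makes the incident $W$-edges pair off correctly against the $\Qcal$-edges at each vertex of $V(\Qcal)$, while (W2) stops a vertex outside $V(\Qcal)$ from being visited twice. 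Since $W$ is finite and every path of $\Qcal$ is finite, each component of $\Qcal'$ is finite, hence a path or a cycle; the cycles meet neither a source nor a sink and may be discarded. Reading off degrees then gives sources $\Ini(\Qcal)+w_0$ and sinks $\Ter(\Qcal)+w_n\subseteq B_0$, so $\Qcal'$ is a linkage with $\Ini(\Qcal)\subsetneq\Ini(\Qcal')\subseteq X$ onto $\Ter(\Qcal)\subsetneq\Ter(\Qcal')\subseteq B_0$, as claimed.

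For (ii), let $U$ be the set of vertices reachable by a $\Qcal$-alternating walk starting in $X\-\Ini(\Qcal)$ (the trivial walk allowed, so $X\-\Ini(\Qcal)\subseteq U$); the hypothesis says exactly that $U\cap(B_0\-\Ter(\Qcal))=\emptyset$. A key structural fact is that $U\cap V(Q)$ is an initial segment of each $Q\in\Qcal$, because from any witnessing walk reaching a vertex of $Q$ one may continue backward along $Q$ through its $\Qcal$-edges, respecting (W1) and (W3). I therefore let $s_Q$ be the last vertex of $Q$ in $U$ when $U\cap V(Q)\neq\emptyset$ and $s_Q:=\Ini(Q)$ otherwise, and set $S:=\{s_Q:Q\in\Qcal\}$, a set meeting each path of $\Qcal$ exactly once. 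To verify that $S$ separates $X$ from $B_0$, I assume an $X$--$B_0$ path $P$ avoids $S$ and track where $P$ leaves $U$: its endpoints force $P$ to start in $U$ and end outside $U$ (otherwise $\Ini(P)$ or $\Ter(P)$ is itself the vertex $s_Q$ of its path), so $P$ has an edge $(u,u')$ with $u\in U$ and $u'\notin U$. If $(u,u')\in E(\Qcal)$ then $u'$ is the $\Qcal$-successor of $u$, forcing $u=s_Q$; and if $(u,u')\notin E(\Qcal)$ then extending a witnessing walk for $u$ by this edge would place $u'\in U$ unless $u=s_Q$. Either way $u\in S\cap V(P)$, a contradiction.

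The main obstacle, common to both parts, is the bookkeeping forced by the alternation conditions at vertices of $V(\Qcal)$: in (i), confirming through (W2) and (W3) that the symmetric difference is genuinely a family of disjoint paths with exactly the two prescribed new endpoints and no spurious components; and in (ii), checking that reachability spreads as claimed and that a witnessing walk can be chosen to arrive along a $\Qcal$-edge so as to satisfy (W3) upon extension, while the appended edge is not already used, keeping the edges of the walk distinct. Establishing the initial-segment property of $U\cap V(Q)$ together with this edge-disjoint extendability cleanly is where the argument needs the most care; once they are in place, both the augmentation and the separator fall out of the degree and crossing analyses sketched above.
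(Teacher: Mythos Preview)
The paper does not give its own proof of this lemma: it records the statement as a classical result of Gr\"unwald and points the reader to \cite{Gru38} and \cite[Lemmas~3.3.2 and~3.3.3]{Die10}. Your proposal is precisely the standard augmenting-walk argument that those references contain, and it is correct; note also that the paper's own \autoref{thm:aws2paths} later carries out essentially the same symmetric-difference degree count you sketch for part~(i), in a slightly more general setting.
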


A set $X\subseteq V$ in $(D, B_0)$ is \emph{topologically linkable} if $X$ admits a \emph{topological linkage}, which means that from each vertex $x\in X$, there is a \emph{topological path} $P_x$, i.e.~$P_x$ is the spine of an outgoing comb, a path ending in the centre of a linking fan, or a path ending in $B_0$, such that $P_x$ is disjoint from $P_y$ for any $y\neq x$. Clearly, a finite topologically linkable set is linkable. 
Denote by $M_{TL}(D, B_0)$ the pair of $V$ and the set of the topologically linkable subsets. 
Carmesin gave the following connection between dimazes (not necessarily defining a matroid) and topological linkages.

\begin{cor}\cite[Corollary 5.7]{Car14+}\label{thm:finitarizationOfGammoid}
Given a dimaze $(D, B_0)$, $M_{TL}(D, B_0) = M_L(D, B_0)^{\rm fin}$. In particular, $M_{TL}(D, B_0)$ is always a finitary matroid.
\end{cor}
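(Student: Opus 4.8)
The plan is to prove the set equality $M_{TL}(D,B_0)=M_L(D,B_0)^{\rm fin}$ by establishing the two inclusions separately; the ``in particular'' clause then follows at once, since $M_L(D,B_0)^{\rm fin}$ is by construction a finitary set system, and it is a matroid by Carmesin's theorem recalled in the introduction.

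\emph{The inclusion $M_{TL}(D,B_0)\subseteq M_L(D,B_0)^{\rm fin}$.} Let $I$ be topologically linkable, witnessed by pairwise disjoint topological paths $(P_x)_{x\in I}$, and let $F\subseteq I$ be finite. Then $(P_x)_{x\in F}$ is again a topological linkage, so $F$ is a finite topologically linkable set and hence linkable, as already observed in the text; thus every finite subset of $I$ is linkable, i.e.\ $I\in M_L(D,B_0)^{\rm fin}$. (That elementary fact is itself a Menger argument: if $F$ were not linkable then, applying \autoref{thm:AW2Linkage}(ii) to a maximum linkage from a subset of $F$, one would obtain an $F$--$B_0$ separator $S$ with $|S|<|F|$; but a finite $S$ must meet each $P_x$, since for the two unbounded types of topological path, following $P_x$ and then any spike of its associated outgoing comb or linking fan gives one of infinitely many pairwise disjoint $x$--$B_0$ paths, all meeting $S$, while for a $P_x$ already ending in $B_0$ this is trivial; as the $P_x$ are disjoint this forces $|S|\ge|F|$, a contradiction.)

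\emph{The inclusion $M_L(D,B_0)^{\rm fin}\subseteq M_{TL}(D,B_0)$.} This is the substantive direction: from the hypothesis that every finite subset of a set $I$ is linkable one must assemble a \emph{single} topological linkage of all of $I$ at once. If one may invoke Carmesin's result that $M_{TL}(D,B_0)$ is a finitary matroid, the inclusion is immediate, because a linkage of a finite set is in particular a topological linkage, so every finite subset of $I$ lies in $M_{TL}(D,B_0)$ and finitariness yields $I\in M_{TL}(D,B_0)$. For a self-contained argument one builds the topological linkage directly. In the countable case $I=\{x_1,x_2,\dots\}$ one chooses linkages $\mathcal P_n$ of $\{x_1,\dots,x_n\}$ and constructs topological paths $Q_1,Q_2,\dots$ one at a time along a decreasing chain $\Nbb\supseteq N_1\supseteq N_2\supseteq\cdots$ of infinite index sets: $Q_i$ is traced out from the initial segments that are common to cofinally many of the paths of $\{\mathcal P_n:n\in N_{i-1}\}$ starting at $x_i$; if those initial segments fail to stabilise on a common next vertex one commits instead to one entire finite $x_i$--$B_0$ path and moves on, and otherwise $Q_i$ is the ray obtained in the limit. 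For an arbitrary $I$ one runs a transfinite recursion that at each step routes the next vertex through a topological path chosen as short as possible while preserving the invariant that every finite subset of the untreated vertices remains linkable in the digraph with the used vertices deleted.

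I expect the main obstacle to be precisely this last inclusion, and within it two points. First, one must certify that a limiting ray $Q_i$ really is the spine of an \emph{outgoing comb} contained in $(D,B_0)$: the approximating paths that peel off $Q_i$ towards $B_0$ at later and later vertices supply the spikes, but these must be thinned to a pairwise disjoint family that also avoids $Q_i$ and all previously chosen $Q_j$'s — this is exactly where outgoing combs are forced to appear. Second, one must keep the whole family $(Q_i)$ pairwise disjoint and, in the uncountable case, keep ``enough room'' for the untreated vertices through the limit stages of the recursion; here the absence of any local-finiteness hypothesis on $D$ (vertices may have infinite out-degree, so K\"onig's Lemma is unavailable) makes the bookkeeping delicate and forces the discipline of never committing a vertex to a longer topological path than it needs.
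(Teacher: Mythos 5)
The first thing to note is that the paper does not prove this statement at all: it is quoted verbatim from Carmesin \cite[Corollary 5.7]{Car14+}, so there is no in-paper argument to compare against. Judged on its own terms, your main line of reasoning is a correct derivation of the stated equality \emph{from} the other result of Carmesin that the paper also quotes, namely that $M_{TL}(D,B_0)$ is a finitary matroid. The easy inclusion is fine: restricting a topological linkage to a finite subset and then running Menger via \autoref{thm:AW2Linkage} is a correct (and more careful than the paper's ``clearly'') proof that finite topologically linkable sets are linkable; the key point that a finite separator on a smaller linkage must meet every topological path, because each unbounded topological path carries infinitely many pairwise internally disjoint continuations to $B_0$, is exactly right. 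Given that, the reverse inclusion does follow in one line from the finitariness of $M_{TL}(D,B_0)$, since a linkage of a finite set is a topological linkage. So as a reduction of the corollary to the quoted theorem, your proof is complete.

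The gap is in what you offer as the ``self-contained argument'' for $M_L(D,B_0)^{\rm fin}\subseteq M_{TL}(D,B_0)$, and you should be clear that without it your proof has not established the substantive content: the ``in particular'' clause \emph{is} Carmesin's theorem, so invoking that theorem to prove the hard inclusion is circular if the corollary is meant to stand on its own. The compactness sketch you give does not close this. Without any local finiteness assumption the ``initial segments common to cofinally many $\mathcal P_n$'' need not stabilise, and your fallback of committing $x_i$ to one entire finite path taken from some $\mathcal P_n$ can destroy the invariant that all finite subsets of the remaining vertices stay linkable in what is left of the digraph (that path may be unavoidable for later vertices). Moreover, even when a limit ray is produced, certifying it as the spine of an outgoing comb requires extracting infinitely many pairwise disjoint spikes that also avoid all previously fixed $Q_j$ --- this is essentially \autoref{thm:OR2OC} but applied in a situation where no ambient topological linkage is yet available, and it is precisely the content of Carmesin's Menger-type theorem. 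These steps are named in your proposal as ``obstacles'' rather than resolved, so the self-contained version should be regarded as a plan, not a proof; the citation-based version is the one that works, and it matches what the paper itself does by quoting \cite{Car14+}.
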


A \emph{strict topological gammoid} is a matroid of the form $M_{TL}(D, B_0)$, and a restriction of which is called a \emph{topological gammoid}.

\subsection{Transversal system}
Let $G= (V, W)$ be a bipartite graph and call $V$ and $W$, respectively, the {\em left} and the {\em right} vertex class of $G$. 
A subset $I$ of $V$ is {\em matchable onto $W'\subseteq W$} if there is a matching $m$ of $I$ such that $m\cap V=I$ and $m\cap W=W'$; where we are identifying a set of edges (and sometimes more generally a subgraph) with its vertex set. Given a set $X\subseteq V$ or $X\subseteq W$, we write $m(X)$ for the set of vertices matched to $X$ by $m$ and $m\upharpoonright X$ for the subset of $m$ incident with vertices in $X$.

\begin{defi}
Given a bipartite graph $G=(V,W)$,  
the pair of $V$ and all its matchable subsets is denoted by $M_T(G)$. 
A \emph{transversal matroid} is a matroid isomorphic to $M_T(G)$ for some $G$. 
Given a transversal matroid $M$, $G$ is a \emph{presentation} of $M$ if $M = M_T(G)$.
\end{defi}

In general, a transversal matroid may have different presentations. The following is a well-known fact (see \cite{BS68}). 

\begin{lem}
\label{thm:coversRHS}
Let $G=(V, W)$ be a bipartite graph. Suppose there is a maximal element in $M_T(G)$, witnessed by a matching $m_0$. Then $M_T(G)=M_T(G\- (W-m_0))$, and $N(W - m_0)$ is a subset of every maximal element in $M_T(G)$. 
\end{lem}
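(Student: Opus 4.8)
Write $W_0:=W\cap m_0$ and $W_1:=W\setminus m_0$ (so $W_1=W-m_0$), let $I_0:=V\cap m_0$ be the maximal element of $M_T(G)$ witnessed by $m_0$, and set $G':=G\setminus W_1$. Since every matching in $G'$ is a matching in $G$ saturating the same set of left vertices, the inclusion $M_T(G')\subseteq M_T(G)$ is immediate; so for the first assertion the plan is to take an arbitrary $I\in M_T(G)$, witnessed by a matching $m$ with $V\cap m=I$, and to convert $m$ into a matching of $I$ that avoids $W_1$ entirely.

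The device for this is the symmetric difference $H:=m\,\triangle\,m_0$, every vertex of which has degree at most $2$. A vertex $w$ of $U:=(W\cap m)\cap W_1$ is saturated by $m$ but not by $m_0$, hence has degree exactly $1$ in $H$; therefore $w$ is an endpoint of a component $P_w$ of $H$ which is a finite path or a ray, and which starts with the $m$-edge at $w$. The one place the hypothesis is used is the claim: when $P_w$ is a finite path, its other endpoint lies in $W_0$. Indeed, $P_w$ alternates between $m$- and $m_0$-edges, so were it to end at a vertex $z\in V$ this $z$ would be incident with the final $m$-edge and, being an endpoint of $H$, would be unsaturated by $m_0$; then $m_0\,\triangle\,E(P_w)$ is a matching saturating exactly $I_0+z$, contradicting the maximality of $I_0$. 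So the other endpoint of $P_w$ is a vertex of $W$ saturated by $m_0$, i.e.\ a vertex of $W_0$. In particular every component of $H$ contains at most one vertex of $U$, so the paths and rays $P_w$ $(w\in U)$ are pairwise disjoint.

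Now put $P:=\bigcup_{w\in U}P_w$ and $m':=m\,\triangle\,E(P)$. On each $P_w$ this is the familiar shift along an alternating path or ray, so $m'$ is again a matching; every $V$-vertex of $P_w$ is an interior vertex of $P_w$ (both endpoints of $P_w$ lie in $W$, or $P_w$ is a ray) and hence stays saturated, while no $V$-vertex off $P$ is touched, so $V\cap m'=I$; and $m'$ avoids $W_1$, since each $w\in U$ loses its unique edge while every other $W$-vertex of $P$ is saturated by $m_0$ and so lies in $W_0$. Therefore $I\in M_T(G')$, and the first assertion follows. For the second, let $B$ be any maximal element of $M_T(G)$. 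By the first assertion $B\in M_T(G')$, so $B$ is witnessed by a matching $m_B$ that uses no vertex of $W_1$. If some $v\in N(W_1)$ were not in $B$, pick $w\in W_1$ adjacent to $v$; both $v$ and $w$ are unused by $m_B$, so $m_B+vw$ witnesses $B+v\in M_T(G)$, contradicting maximality of $B$. Hence $N(W_1)\subseteq B$.

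The step I expect to be delicate is legitimising the surgery $m':=m\,\triangle\,E(P)$ in the infinite setting: it is performed simultaneously over all the $P_w$, of which there may be infinitely many and each of which may itself be infinite (a ray), so one has to verify that this global edge-exchange still yields a matching whose set of saturated left vertices is exactly $I$. This is precisely what the degree-and-component analysis of $H$ is for; once it is in place, the maximality of $I_0$ enters only to rule out the single bad configuration in which $P_w$ terminates inside $V$, and the remainder is bookkeeping.
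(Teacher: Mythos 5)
Your proof is correct. The paper itself gives no argument for this lemma (it is quoted as a well-known fact from the Brualdi--Scrimger reference), and your symmetric-difference argument is the standard one: the component analysis of $m\,\triangle\,m_0$, the use of maximality of $I_0$ to exclude a finite alternating path ending in $V$, and the simultaneous shift along the pairwise disjoint paths and rays $P_w$ are all handled correctly in the infinite setting, so nothing further is needed.
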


In case $M_T(G)$ is a matroid, the second part states that $N(W-m_0)$ is a set of coloops. From now on, wherever there is a maximal element in $M_T(G)$, we assume that $W$ is covered by a matching. 

If $G$ is finite, Edmonds and Fulkerson \cite{EF65} showed that $M_T(G)$ satisfies (I3), and so is a matroid. When $G$ is infinite, $M_T(G)$ also satisfies (I3) (\cite{ALM}) but need not be a matroid. 
Given a matching $m$, an \emph{$m$-alternating walk} is a walk such that the consecutive edges alternate in and out of $m$ in $G$. Given another matching $m'$, an $m$-$m'$-{\it alternating walk} is a walk such that consecutive edges alternate between the two matchings. 

A standard compactness proof shows that a left locally finite bipartite graph $G=(V,W)$, i.e.~every vertex in $V$ has finite degree, defines a finitary transversal matroid. 
\begin{lem}[\cite{MP67}]
\label{thm:finitaryTransversal}
Every left locally finite bipartite graph defines a finitary transversal matroid.
\end{lem}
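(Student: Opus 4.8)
The plan is to establish all four matroid axioms for $M_T(G)$ together with finitariness, invoking left local finiteness only in the step that produces infinite matchings. Write $\Ical$ for the family of matchable subsets of $V$. Axioms (I1) and (I2) are immediate, since the empty matching witnesses $\emptyset\in\Ical$ and restricting a matching witnesses downward closure. Axiom (I3) holds for $M_T(G)$ of any bipartite graph, as recalled above from \cite{ALM}. Thus it remains to prove that $M_T(G)$ is finitary: once this is known, (IM) follows because finitary set systems satisfy (IM) by Zorn's Lemma (noted in Section~\ref{sec:pre}), and hence $M_T(G)$ is a matroid, necessarily a finitary transversal matroid.

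To prove finitariness, let $I\subseteq V$ be such that every finite subset of $I$ is matchable; the reverse implication being trivial, I must show $I\in\Ical$. If $I=\emptyset$ this is clear, so assume $I\neq\emptyset$. Since each singleton $\{v\}$, $v\in I$, is matchable, the neighbourhood $N(v)$ is nonempty, and it is finite because $G$ is left locally finite. I would then consider the product space $P:=\prod_{v\in I}N(v)$ carrying the product of the discrete topologies; by Tychonoff's Theorem $P$ is a nonempty compact space, and its points are exactly the functions $f$ assigning to each $v\in I$ a neighbour $f(v)\in N(v)$. Such an $f$ encodes a matching of $I$ precisely when it is injective. For distinct $v,v'\in I$ put $C_{v,v'}:=\{f\in P:f(v)\neq f(v')\}$, a clopen subset of $P$; the injective functions form $\bigcap_{v\neq v'}C_{v,v'}$. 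This family of closed sets has the finite intersection property: given distinct pairs $(v_1,v_1'),\dots,(v_k,v_k')$, the finite set $F:=\{v_1,v_1',\dots,v_k,v_k'\}$ is matchable, say by a matching $m$ with $m\cap V=F$; extend $m$ to a point $f\in P$ by choosing, for each $v\in I\setminus F$, an arbitrary element of the nonempty set $N(v)$, and observe that $f\in C_{v_1,v_1'}\cap\dots\cap C_{v_k,v_k'}$. By compactness $\bigcap_{v\neq v'}C_{v,v'}\neq\emptyset$, and any member of it is an injective choice function, i.e.\ a matching of $I$ onto a subset of $W$; hence $I\in\Ical$.

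The only substantial step is this compactness argument, and within it the single point that must be got right is the verification of the finite intersection property — which is exactly where the hypothesis that all finite subsets of $I$ are matchable enters, together with the finiteness of each $N(v)$ needed to make $P$ compact. Everything else is either immediate or a citation to results already recorded above.
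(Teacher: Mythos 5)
Your argument is correct and is precisely the ``standard compactness proof'' the paper alludes to (the paper itself only cites \cite{MP67} and does not write the proof out): Tychonoff compactness of $\prod_{v\in I}N(v)$ plus the finite intersection property of the clopen sets $C_{v,v'}$ yields an injective choice function, i.e.\ a matching of $I$, and the reduction of (IM) to finitariness via Zorn's Lemma matches the remark in \autoref{sec:pre}. No gaps.
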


The following corollary is a tool to show that a matroid is not transversal.
 
\begin{lem}\label{thm:TransversalInfiniteCircuit}
Any infinite circuit of a transversal matroid contains an element which does not lie in any finite circuit. 
\end{lem}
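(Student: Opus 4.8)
The statement to prove is \autoref{thm:TransversalInfiniteCircuit}: any infinite circuit $C$ of a transversal matroid $M = M_T(G)$, with $G = (V,W)$, contains an element lying in no finite circuit. The plan is to argue by contradiction: assume every element of $C$ lies in some finite circuit. Since $C$ is a circuit, $C - e$ is independent for each $e \in C$, so it is matchable; fix a matching $m$ witnessing that $C - e$ is matchable (for a fixed but arbitrary $e \in C$). The key structural fact I would use is that in a transversal matroid, if $I$ is independent but $I + x$ is dependent, then $I + x$ contains a unique circuit, and that circuit can be traced out by $m$-alternating paths in $G$ starting from the unmatched vertex $x$: the circuit through $x$ consists exactly of those vertices of $I + x$ reachable from $x$ by $m$-alternating paths that could ``steal'' the match. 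So the infinite circuit $C$ is an $m$-alternating structure in $G$ rooted at $e$.

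First I would set up the alternating-path picture precisely. With $m$ a matching of $C - e$ onto some $W' \subseteq W$, the element $e$ is unmatched. For any $x \in C - e$, since $C$ is a circuit, $(C - e) \cup \{e\}= C$ is dependent and minimally so; standard transversal-matroid theory (essentially the augmenting-path argument behind \autoref{thm:coversRHS} and the (I3) proof in \cite{ALM}) tells us that there is an $m$-alternating path in $G$ from $e$ to $m(x)$, i.e.\ $e$ can reach the matched edge at $x$ via an alternating path using only vertices of $C$ on the left. Indeed every vertex of $C$ is reachable this way, or else $C$ would split into a matchable part plus something not needed, contradicting minimality. So there is an infinite $m$-alternating ``tree'' (really we can extract a single infinite alternating ray) $e = v_0, w_0, v_1, w_1, v_2, \ldots$ with $v_i \in C \subseteq V$, $w_i \in W$, $v_{i+1} = m^{-1}(w_i)$, and $v_i w_i \notin m$, $v_{i+1} w_i \in m$.

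Next comes the main point. Take an element $v_k$ far along this ray. The claim is that if $v_k$ lies in a finite circuit $C'$, we derive a contradiction. A finite circuit $C'$ containing $v_k$ is, within the matroid, a minimal dependent set; restricting the matching $m$ appropriately, $C'$ must be ``closed'' under alternating reachability among its own vertices with respect to a matching of $C' - v_k$. The tension I want to exploit: the infinite ray forces $v_k$ to be linked, via alternating paths through infinitely many further vertices of $C$, to the ``source'' $e$, and this reachability cannot be confined to the finite set $C'$. More carefully: because $C$ is a circuit, for the finite subset $C' \subseteq C$ we have $C'$ dependent, hence $C'$ contains a circuit, which must be all of $C'$ only if $C'$ is itself minimal — but any circuit of $M$ contained in $C$ equals $C$ (circuits are incomparable and $C$ is a circuit), so no proper finite subset of $C$ is dependent. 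Thus $v_k$'s finite circuit $C'$ is \emph{not} a subset of $C$; it meets $V \setminus C$. Now I combine $C'$ with the infinite alternating ray in $C$: matching $m$ on $C - e$ together with a matching of $C' - v_k$ can be spliced along $v_k$ to produce a matching of $(C - e) \cup (C' - v_k) \cup \{e\}$ — i.e.\ we reroute: use the $C'$-matching to cover $v_k, v_{k-1}, \ldots$? — the precise splice is the technical heart. The outcome I aim for is a matching covering $C$ itself (by using the finite circuit's ``spare capacity'' on the right to absorb the deficiency at $e$), contradicting that $C$ is dependent.

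\textbf{Expected main obstacle.} The delicate step is making the splice rigorous: showing that a finite alternating cycle-type structure coming from the finite circuit $C'$ can be combined with the infinite alternating ray emanating from $e$ to genuinely increase the matching on $C$, rather than merely shuffling it. One must rule out that the alternating ray and the finite circuit share so many right-vertices that no net gain occurs. I would handle this by choosing $v_k$ with $k$ large enough that the initial segment $v_0 w_0 \cdots v_k$ of the ray is disjoint (on the right) from the finite circuit $C'$ except possibly near $v_k$ — possible since $C'$ is finite and the ray is infinite — and then the symmetric difference of $m$ with (ray-segment $\cup$ $C'$-matching) is a matching covering $e$ and all of $C$, the desired contradiction. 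Care with the case distinctions (whether $v_k$'s match $w_{k-1}$ or $w_k$ lies in $C'$, whether $C'$'s matching onto the right overlaps $W'$) is where the bookkeeping lives, but conceptually it is the standard ``two alternating structures with a common vertex merge'' phenomenon.
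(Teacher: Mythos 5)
Your proposal has two genuine gaps, and the second one is fatal to the whole strategy. First, the extraction of an infinite $m$-alternating ray from $e$ is not justified: the alternating reachability tree branches at each left vertex $v$ according to the number of non-matching edges at $v$, so it is locally finite (and hence yields a ray via K\"onig's lemma) only if every vertex of $C$ has finite degree in $G$. If some $v\in C$ has infinitely many neighbours in $W$, the reachability structure can be infinite without containing any ray (e.g.\ $e$ adjacent to infinitely many matched right vertices, all of $C$ reachable at distance two). This is not a removable technicality: a finite circuit $C'$ satisfies $|N(C')|=|C'|-1$, so every element of a finite circuit has finite degree, and therefore an infinite-degree vertex is precisely an element lying in no finite circuit --- the case your construction cannot handle is exactly the case in which the lemma's conclusion is witnessed. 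The paper's proof runs in the opposite direction: it restricts $M_T(G)$ to $C$, invokes \autoref{thm:finitaryTransversal} to conclude that some vertex of $C$ has infinite degree (otherwise the restriction would be finitary, and $C$, all of whose proper --- in particular finite --- subsets are independent, would itself be independent), and then observes that such a vertex lies in no finite circuit.

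Second, the splice rests on a misconception: a finite circuit has a deficiency, not ``spare capacity''. If $v_k\in C'$ with $C'$ a finite circuit, then $|N(C')|=|C'|-1$, every matching of $C'-v_k$ saturates $N(C')$, and $N(v_k)\subseteq N(C')$; so adjoining a matching of $C'-v_k$ makes it strictly harder, not easier, to cover $v_k$. More fundamentally, the outcome you aim for --- a matching covering all of $C$ --- cannot be produced by any bookkeeping, because $C$ really is dependent: shifting $m$ along the initial segment of the ray merely relocates the unmatched vertex from $e$ to $v_k$, and any augmenting path from $v_k$ would have all its interior left vertices matched, hence in $C$, and its symmetric difference with the matching would exhibit $C$ as matchable, which is absurd. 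Thus no choice of $k$ and no disjointness argument can make the splice succeed; the contradiction must come from compactness (as encapsulated in \autoref{thm:finitaryTransversal}), not from an explicit augmentation.
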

\begin{proof}
Let $C$ be an infinite circuit of some $M_T(G)$. Applying \autoref{thm:finitaryTransversal} on the restriction of $M_T(G)$ to $C$, we see that there is a vertex in $C$ having infinite degree. However, such a vertex does not lie in any finite circuit. 
\end{proof}

\section{Minor}\label{sec:minor}
The class of gammoids is closed under deletion by definition. In fact, finite gammoids are minor-closed. To see this, note that matroid deletion and contraction commute, so it suffices to show that a contraction minor $M/X$ of a strict gammoid $M$ is also a gammoid. Indeed, in \cite{IP73} it was shown that finite strict gammoids are precisely the dual of finite transversal matroid. Moreover, they provided a construction to turn a dimaze to a bimaze presentation of the dual, and vice versa (essentially Definitions \ref{def:dualityConstruction} and \ref{def:conDimaze}). Thus, we apply the construction to a presentation of $M$ and get one of $M^*$. 
By deleting $X$, we get a presentation of the transversal matroid $M^*\- X$. Reversing the construction with any base of $M^*\- X$ gives us  a dimaze presentation of $(M^*\setminus X)^* = M/X$. 

In case of general gammoids, we can no longer appeal to duality, since, as we shall see, strict gammoids need not be cotransversal (\autoref{thm:ARnotCotransversal}) and the dual of transversal matroids need not be strict gammoids (\autoref{thm:transversalNotDSG}). 
We will instead investigate the effect of the construction sketched above on a dimaze directly.  
We are then able to show that the class of $C^O$-free gammoids, i.e.~gammoids that admit a $C^O$-free presentation, is minor-closed. 
In combination with the linkage theorem, we can also prove that finite rank minors of gammoids are gammoids.
It remains open whether the class of gammoids is closed under taking minors.

\medskip
Topological gammoids are introduced in \cite{Car14+} in response to a question raised by Diestel. The independent set systems are always finitary and define matroids. It turns out that such matroids are precisely the finitary gammoids. By investigating the structure of dimaze presentations of such gammoids, we then show that finitary strict gammoids, or equivalently, topological gammoids, are also closed under taking minors. 

\subsection{Matroid contraction and shifting along a linkage}
Our aim is to show that a contraction minor $M/S$ of a strict gammoid $M$ is a strict gammoid. By \autoref{thm:wlogContractIndependent}, we may assume that $S$ is independent. 
The first case is that $S$ is a subset of the exits. 
\begin{lem}\label{thm:contractSubsetOfB_0}
Let $M = M_L(D, B_0)$ 
be a strict gammoid and $S \subseteq B_0$. Then a dimaze presentation of $M / S$ is given by $M_L(D - S, B_0 \setminus S)$.
\end{lem}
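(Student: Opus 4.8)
The claim is that if $M = M_L(D, B_0)$ is a strict gammoid and $S \subseteq B_0$, then $M/S = M_L(D - S, B_0 \setminus S)$. The plan is to verify directly that a set $I \subseteq V(D) \setminus S$ is independent in $M/S$ if and only if it is linkable onto (a subset of) $B_0 \setminus S$ in the dimaze $(D - S, B_0 \setminus S)$. Since $S \subseteq B_0$ consists of sinks, it is a set of coloops of $M$ (each $s \in S$ is linked to itself by the trivial path, and no path can leave $s$), hence $S$ is independent and, by the standard fact that contracting a coloop equals deleting it, $I \in M/S$ if and only if $I \cup S \in M$.

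First I would unfold the forward direction: if $I \cup S$ is linkable in $(D, B_0)$, take a witnessing linkage $\Pcal$. Since $S \subseteq \Ini(\Pcal)$ and each $s \in S$ is a sink, the path of $\Pcal$ starting at $s$ is the trivial one-vertex path $s$; hence these paths use no vertices other than $S$ itself. Removing them leaves a linkage $\Pcal'$ from $I$ to $B_0 \setminus S$ whose paths avoid $S$ entirely, so $\Pcal'$ is a valid linkage in $(D - S, B_0 \setminus S)$ witnessing $I \in M_L(D - S, B_0 \setminus S)$. For the converse, a linkage from $I$ to $B_0 \setminus S$ in $(D - S, B_0 \setminus S)$ is in particular a set of disjoint paths in $D$ ending in $B_0$ and avoiding $S$; adjoining the trivial paths $\{s : s \in S\}$ gives a linkage from $I \cup S$ onto $(B_0 \cap \Ter(\Pcal')) \cup S \subseteq B_0$, so $I \cup S \in M$.

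Combining the two directions with $I \in M/S \iff I \cup S \in M$ gives $M/S$ and $M_L(D - S, B_0 \setminus S)$ the same independent sets on the same ground set $V(D) \setminus S$, which is the claim. The only mildly delicate point is the bookkeeping that contracting the coloop-set $S$ coincides with deleting it; this follows from \autoref{thm:wlogContractIndependent} together with the elementary observation that every base of $M$ contains $S$ (alternatively one can cite the basic fact about contracting independent coloops from \cite{BDKPW}). There is no real obstacle here: the argument is a direct unwinding of definitions, and the essential content is simply that trivial paths at the exits in $S$ can be freely added or removed from any linkage without interacting with the rest of the digraph.
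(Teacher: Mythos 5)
Your overall strategy is the paper's: reduce to the equivalence $I \in \Ical(M/S) \iff I \cup S \in \Ical(M)$, and then convert linkages of $I \cup S$ in $(D, B_0)$ into linkages of $I$ in $(D - S, B_0 \setminus S)$ by adding or removing the trivial paths at the sinks in $S$. That second, combinatorial half of your argument is correct and is essentially verbatim what the paper does (the key point being that a sink $s \in S$ can occur on a path only as its terminal vertex, so a linkage of $I$ whose terminal vertices avoid $S$ avoids $S$ altogether).

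The problem is the justification you offer for the first half. Vertices of $B_0$ are in general \emph{not} coloops of $M_L(D,B_0)$, and your ``elementary observation that every base of $M$ contains $S$'' is false: let $D$ have vertices $a,b$, the single edge $(a,b)$, and $B_0=\{b\}$. Then $\{a\}$ and $\{b\}$ are both bases, so $b$ is not a coloop; moreover $M/b$ has only $\emptyset$ independent while $M\setminus b$ has $\{a\}$ independent, so ``contracting $S$ coincides with deleting $S$'' also fails. Worse, if you did use that identification literally, you would conclude $M/S = M_L(D,B_0)\setminus S$, whose independent sets are those $I\subseteq V\setminus S$ linkable in $(D,B_0)$ \emph{possibly via paths ending in $S$} --- which is not $M_L(D-S, B_0\setminus S)$ (in the example above it gives $\{a\}$ independent, wrongly). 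What you actually need, and what the paper invokes, is only that $S$ is independent --- which your trivial-path observation already establishes --- because for any independent set $S$ the definition of contraction gives directly that $I\in\Ical(M/S)$ if and only if $I\cup S\in\Ical(M)$ (here $S$ is the unique base of $M|S$). Replace the coloop argument by this one sentence and your proof is correct and coincides with the paper's.
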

\begin{proof}
Since $S\subseteq B_0$ is independent, $I\in \Ical(M/S) \iff I \cup S \in \Ical(M)$. Moreover, 
\begin{align*}
I\in \Ical(M/S)	&\iff I \cup S \mbox{ admits a linkage in $(D, B_0)$ } \\
					&\iff I \mbox{ admits a linkage $\Qcal$ with $\Ter(\Qcal)\cap S = \emptyset$ in $(D, B_0)$}\\
					&\iff I \in \Ical( M_L(D - S, B_0\- S).  \qedhere
\end{align*}
\end{proof}

Thus, it suffices to give a dimaze presentation of $M$ such that $S$ is a subset of the exits. For this purpose we consider the process of ``shifting along a linkage'', which replaces the previously discussed detour via duality.

\medskip 
Throughout the section, $(D, B_0)$ denotes a dimaze, $\Qcal$ a set of disjoint paths or rays, $S := \ini(\Qcal)$ and $T := \ter(\Qcal)$.  
Next, we define various maps which are dependent on $\Qcal$.  

Define a bijection between $V\- T$ and $V\- S$ as follows: $\vec{\Qcal}(v) : = v $ if $v\notin V(\Qcal)$; otherwise $\vec{\Qcal}(v) := u$ where $u$ is the unique vertex such that $(v, u)\in E(\Qcal)$. The inverse is denoted by $\cev{\Qcal}$. 

Construct the digraph $\vec{\Qcal}(D)$ from $D$ by replacing each edge $(v, u) \in E(D) \setminus E(\Qcal)$ with $(\vec{\Qcal}(v), u)$ and each edge $(v, u) \in \Qcal$ with $(u, v)$.
Set for the rest of this section 
\[D_1 := \vec{\Qcal}(D) \mbox{ and } B_1 := (B_0 \setminus T) \cup S\]
and call $(D_1, B_1)$ the \emph{$\Qcal$-shifted} dimaze.

Given a $\Qcal$-alternating walk $W = w_0 e_0 w_1 e_1 w_2 \ldots$ in $D$, let $\vec{\Qcal}(W)$ be obtained from $W$ by deleting all $e_i$ and each $w_i \in W$ such that $w_i \in V(\Qcal)$ but $e_i \notin E(\Qcal)$. 

For a path or ray $P = v_0 v_1 v_2 \ldots$ in $D_1$, let $\cev{\Qcal}(P)$ be  obtained from $P$ by inserting after each $v_i \in P \setminus \ter(P)$  the following:
\begin{description}
\item[$(v_i, v_{i+1})$] if $v_i \notin V(\Qcal)$;
\item[$(v_{i+1}, v_i)$] if $v_i \in V(\Qcal)$ and $(v_{i+1}, v_i) \in E(\Qcal)$;
\item[$(w, v_i) w (w, v_{i+1})$] with $w := \cev{\Qcal}(v_i)$ if $v_i \in V(\Qcal)$ but $(v_{i+1}, v_i) \notin E(\Qcal)$.
\end{description}

\begin{figure}
\begin{center}
\includegraphics[scale=0.9]{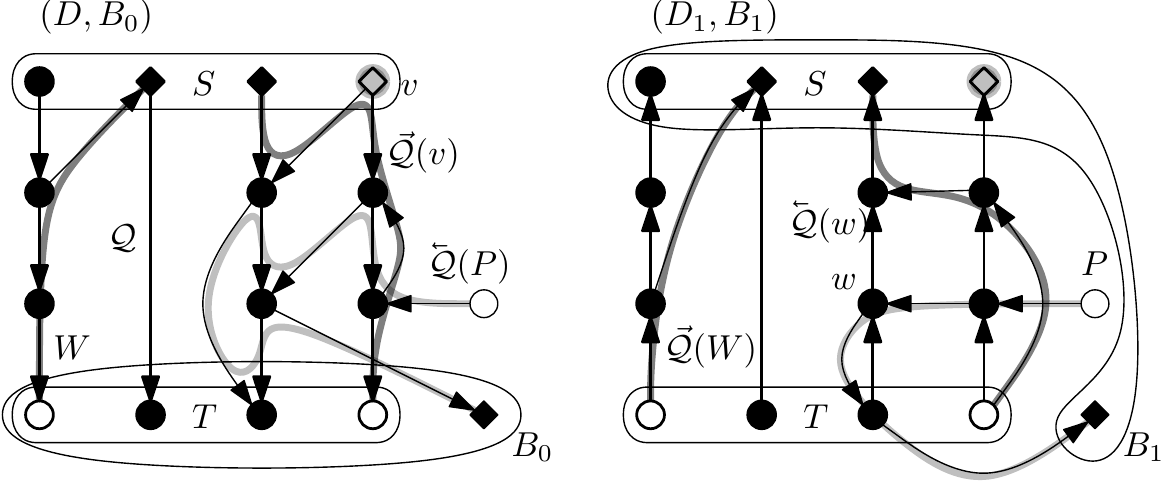}
\end{center}
\caption{A $\Qcal$-shifted dimaze: $D_1=\vec{\Qcal}(D)$, $B_1 = (B_0 \setminus T) \cup S$, where $\Qcal$ consists of the vertical downward paths. 
Outlined circles and diamonds are respectively initial and terminal vertices of $\Qcal$-alternating walks (left) and their $\vec{\Qcal}$-images (right). }
\label{fig:Qcal_shift}
\end{figure}

We examine the relation between alternating walks in $D$ and paths/rays in~$\vec{\Qcal}(D)$. 

\begin{lem}\label{thm:AW-path_shift}
\begin{enumerate}
\item[(i)] A $\Qcal$-alternating walk in $D$ that is infinite or ends in $t\in B_1$ is respectively mapped by $\vec{\Qcal}$ to a ray or a path ending in $t$ in $D_1$. Disjoint such walks are mapped to disjoint paths/rays. 
\item[(ii)]  A ray or a path ending in $t \in B_1$ in $D_1$ is respectively mapped by $\cev{\Qcal}$ to an infinite $\Qcal$-alternating walk or a finite $\Qcal$-alternating walk ending in $t$ in $D$. Disjoint such paths/rays are mapped to disjoint $\Qcal$-alternating walks.
\end{enumerate}
\end{lem}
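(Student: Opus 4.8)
The statement to prove is \autoref{thm:AW-path_shift}: the correspondence between $\Qcal$-alternating walks in $D$ and paths/rays in $D_1 = \vec{\Qcal}(D)$ via the maps $\vec{\Qcal}$ and $\cev{\Qcal}$.

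\medskip

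\textbf{The plan.} The proof is a direct verification that the two maps $\vec{\Qcal}$ and $\cev{\Qcal}$ do what they are supposed to do, using the defining properties (W1)--(W3) of $\Qcal$-alternating walks and the construction of $\vec{\Qcal}(D)$. For part (i), I would start with a $\Qcal$-alternating walk $W = w_0 e_0 w_1 e_1 \ldots$ in $D$ and trace through what happens to each edge under the edge-replacement rule defining $D_1$: an edge $e_i = (w_{i+1}, w_i) \in E(\Qcal)$ (the ``backward along $\Qcal$'' case, by (W1)) becomes $(w_i, w_{i+1})$ in $D_1$, so it is traversed forward; an edge $e_i \notin E(\Qcal)$ becomes $(\vec{\Qcal}(w_i), w_{i+1})$, again traversed forward from $\vec{\Qcal}(w_i)$ to $w_{i+1}$. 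The key bookkeeping point is that the deletion rule in the definition of $\vec{\Qcal}(W)$ — delete $w_i$ together with $e_i$ when $w_i \in V(\Qcal)$ but $e_i \notin E(\Qcal)$ — exactly splices these replaced edges into a coherent walk: when $w_i\in V(\Qcal)$ and the edge $e_{i-1}$ entering $w_i$ is not in $E(\Qcal)$, property (W3) forces $e_i\in E(\Qcal)$, hence $e_{i-1}$ was replaced by an edge ending at $w_i$ while... one has to check the head/tail match up after applying $\vec\Qcal$ to the tail of $e_i$. I would organize this as: (a) the image is a walk (consecutive edges share the right endpoint), (b) it is actually a path or ray, i.e.\ no repeated vertices, which is where (W2) enters: a repeated vertex of $W$ must lie on $\Qcal$, and one checks that the surviving copies in $\vec\Qcal(W)$ are distinct because $\vec\Qcal$ is a bijection on $V\setminus T$ and the deletion removes the ``duplicate visits''; (c) the terminal vertex is preserved, and an infinite walk stays infinite (it cannot collapse to a finite object since edges are distinct and only finitely many lie on any one path of $\Qcal$ between consecutive uses). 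For the disjointness claim I would use the definition of disjoint $\Qcal$-alternating walks (edge-disjoint, $V(W_1)\cap V(W_2)\subseteq V(\Qcal)$, distinct terminal vertices) and check that after applying $\vec\Qcal$ the images share no vertex: a shared vertex would have to come from $V(\Qcal)$, but the deletion rule removes precisely the $V(\Qcal)$-vertices that are ``passed through transversally,'' leaving only those entered/left along $\Qcal$, and edge-disjointness plus the structure of $\Qcal$ rules those out.

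\medskip

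For part (ii), which is essentially the inverse direction, I would take a path or ray $P = v_0 v_1 \ldots$ in $D_1$ and verify that $\cev{\Qcal}(P)$, built by the three-case insertion rule, is a $\Qcal$-alternating walk in $D$: check (W1) by going through the three cases (the inserted edge is in $E(\Qcal)$ exactly in the middle case, where it is traversed backward, matching the ``$e_i=(w_{i+1},w_i)$ iff $e_i\in E(\Qcal)$'' requirement, noting that in the third case the first inserted edge $(w,v_i)\in E(\Qcal)$ is traversed backward and the second $(w,v_{i+1})\notin E(\Qcal)$ forward); check (W2) and (W3) by noting that the only newly introduced vertices are the $w=\cev{\Qcal}(v_i)\in V(\Qcal)$, which are always flanked by an $E(\Qcal)$-edge, and that repeats can only occur at $V(\Qcal)$-vertices because $\cev\Qcal$ is a bijection off $S$. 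Then one observes $\vec\Qcal$ and $\cev\Qcal$ are mutually inverse on the relevant objects, so it suffices to have checked one direction carefully and deduce the other, and in particular infinite $\leftrightarrow$ ray, finite-ending-in-$t$ $\leftrightarrow$ path-ending-in-$t$, and disjointness transfer for free.

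\medskip

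\textbf{Main obstacle.} The genuinely fiddly part is the third case of the $\cev\Qcal$-insertion (and its mirror, the deletion step in $\vec\Qcal(W)$): making sure that when a path in $D_1$ uses an edge $(\vec\Qcal(v),u)$ that came from a non-$\Qcal$ edge $(v,u)$ of $D$ with $v\in V(\Qcal)$, the reinserted detour $(w,v)\,w\,(w,v_{i+1})$ along the $\Qcal$-path genuinely produces a valid alternating walk and — crucially — does not create an illegal vertex repetition or violate (W3) at $w$ or at $v$, and that these detours for different indices $i$, or for different disjoint paths, stay disjoint. Keeping the indexing straight (the off-by-one between $w_i$ in $W$ and $v_i$ in $P$, and the convention $e_{-1}:=e_0$) while checking the boundary/endpoint cases is where care is needed; the underlying ideas are all forced by (W1)--(W3) and the bijectivity of $\vec\Qcal$. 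A clean way to present it is to prove (i) in full and then obtain (ii) by verifying that $\cev\Qcal\circ\vec\Qcal$ and $\vec\Qcal\circ\cev\Qcal$ are the identity on the respective classes of objects, which localizes all the casework to one place.
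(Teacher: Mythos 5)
Your proposal is correct and follows essentially the same route as the paper: a direct verification of (i) using (W1)--(W3) — image edges exist in $D_1$ by the edge-replacement rule, repeated vertices are handled by the deletion rule together with (W2)/(W3), endpoints and infinitude are preserved, and disjointness follows because a shared vertex in $V(\Qcal)$ is left by a non-$\Qcal$-edge in one of the two walks and hence deleted — with (ii) obtained by reversing the construction (equivalently, the mutual-inverse property you cite). The paper likewise proves only (i) in detail and dispatches (ii) in one sentence, so no substantive difference.
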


\begin{proof}
We prove (i) since a proof of (ii) can be obtained by reversing the construction. 

Let $W = w_0 e_0 w_1 e_1 w_2 \ldots$ be a $\Qcal$-alternating walk in $D$.  
If a vertex $v$ in $W$ is repeated, then $v$ occurs twice and there is $i$ such that $v = w_i$ with $e_{i-1} = (w_i, w_{i-1}) \in E(\Qcal)$ and $e_i \notin E(\Qcal )$. Hence, $w_i$ is deleted in $P:=\vec{\Qcal}(W)$ and so $v$ does not occur more than once in $P$, that is, $P$ consists of distinct vertices.

By construction, the last vertex of a finite $W$ is not deleted, hence $P$ ends in $t$. In case $W$ is infinite, by \hyperref[aw3]{(W3)}, no tail of $W$ is deleted so that $P$ remains infinite.

Next, we show that $(v_i, v_{i+1})$ is an edge in $D_1$. Let $w_j = v_i$ be the non-deleted instance of $v_i$. 
If $w_{j+1}$ has been deleted, then the edge $(w_{j+1}, w_{j+2})$ (which exists since the last vertex cannot be deleted) in $D$ has been replaced by the edge $(\vec{\Qcal}(w_{j+1}), w_{j+2}) = (v_i, v_{i+1})$ in $D_1$.
If both $w_j$ and $v_{i+1} = w_{j+1}$ are in $V(\Qcal)$ then the edge $(w_{j+1}, w_j) \in E(\Qcal)$ has been replaced by $(v_i, v_{i+1})$ in $D_1$.
In the other cases $(w_j, w_{j+1}) = (v_i, v_{i+1})$ is an edge of $D$ and remains one in $D_1$.

Let $W_1, W_2$ be disjoint $\Qcal$-alternating walks. 
By construction, $\vec{\Qcal}(W_1) \cap \vec{\Qcal}(W_2) \subseteq W_1 \cap W_2 \subseteq V(\Qcal)$. 
By disjointness, at any intersecting vertex, one of $W_1$ and $W_2$ leaves with an edge not in $E(\Qcal)$. Thus, such a vertex is deleted upon application of $\vec{\Qcal}$. Hence, $\vec{\Qcal}(W_1)$ and $\vec{\Qcal}(W_2)$ are disjoint paths/rays. 
\end{proof}

Note that for a path $P$ in $D_1$ and a $\Qcal$-alternating walk $W$ in $D$, we have
\begin{eqnarray*}
\vec{\Qcal}(\cev{\Qcal}(P)) = P;&
&\cev{\Qcal}(\vec{\Qcal}(W)) = W.
\end{eqnarray*} 

This correspondence of sets of disjoint $\Qcal$-alternating walks in $(D, B_0)$ and sets of disjoint paths or rays in the $\Qcal$-shifted dimaze will be used in various situations in order to show that the independent sets associated with $(D, B_0)$ and the $\Qcal$-shifted dimaze are the same.

Given a set $\Wcal$ of $\Qcal$-alternating walks, define the graph $\Qcal \Delta \Wcal := (V(\Qcal)\cup V(\Wcal), E(\Qcal) \Delta E(\Wcal))$.

\begin{lem}\label{thm:aws2paths}
Let $J\subseteq V \setminus S$ and $\Wcal$ a set of disjoint $\Qcal$-alternating walks, each of which starts from $J$ and does not end outside of $B_1$. Then there is a set of disjoint rays or paths from $X := J \cup (S \setminus \ter(\Wcal))$ to $Y := T \cup (\ter(\Wcal) \cap B_0)$ in $\Qcal \Delta \Wcal$.
\end{lem}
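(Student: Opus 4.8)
The statement asks, given a set $\Wcal$ of disjoint $\Qcal$-alternating walks each starting in $J\subseteq V\setminus S$ and each either infinite or ending in $B_1 = (B_0\setminus T)\cup S$, to produce a set of disjoint paths/rays from $X := J\cup(S\setminus\ter(\Wcal))$ to $Y := T\cup(\ter(\Wcal)\cap B_0)$ inside the symmetric-difference graph $\Qcal\Delta\Wcal$. The natural approach is to pass through the $\Qcal$-shifted dimaze $(D_1,B_1)$ and use \autoref{thm:AW-path_shift}(i). Each walk $W\in\Wcal$ is, by hypothesis, either infinite or ends in some $t\in B_1$, so $\vec\Qcal(W)$ is a ray or a path ending in $t$ in $D_1$, and disjoint walks map to disjoint such rays/paths; call this family $\Pcal := \vec\Qcal(\Wcal)$ in $D_1$. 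Now I would apply $\cev\Qcal$ to each member of $\Pcal$ to come back to $D$: by \autoref{thm:AW-path_shift}(ii) this recovers exactly $\Wcal$ (using $\cev\Qcal(\vec\Qcal(W)) = W$), which is not directly what we want — we want paths, not alternating walks. So the shifted dimaze is a conceptual guide but the real work is combinatorial surgery on $\Qcal\Delta\Wcal$ directly.

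\textbf{Main construction.} The key observation is that $\Qcal\Delta\Wcal$ has bounded degrees: at every vertex the edges of $\Qcal$ contribute at most one in- and one out-edge, the edges of each walk in $\Wcal$ contribute at most one in- and one out-edge at a non-$V(\Qcal)$ vertex (by \hyperref[aw2]{(W2)} a vertex outside $V(\Qcal)$ lies on at most one walk and is used at most once there), and the alternation conditions \hyperref[aw1]{(W1)}--\hyperref[aw3]{(W3)} force cancellation of the backward $\Qcal$-edges at exactly the vertices where a walk enters along a $\Qcal$-edge and leaves along a non-$\Qcal$-edge. I would argue that in $\Qcal\Delta\Wcal$ every vertex has out-degree at most $1$ and in-degree at most $1$ \emph{except} possibly at the finitely-local branch points dictated by the $W_i$; more precisely, I would show that $\Qcal\Delta\Wcal$ is a disjoint union of (forward-oriented) paths, rays, and possibly double rays and cycles, by checking the degree bound vertex-by-vertex using \hyperref[aw1]{(W1)}--\hyperref[aw3]{(W3)} together with the disjointness of the walks. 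The sources of the path/ray components are then precisely the vertices of $X$: a vertex $j\in J$ is a source because the walk through it starts there with a non-$\Qcal$ (forward) edge, adding no in-edge; a vertex $s\in S$ not in $\ter(\Wcal)$ retains its $\Qcal$-out-edge and gains no $\Qcal$-in-edge (since $\Qcal$-edges at $s$ only go out) and no $\Wcal$-in-edge (since $s\notin\ter(\Wcal)$ and $s\notin J$ as $J\subseteq V\setminus S$). Dually, the sinks are exactly the vertices of $Y$: a vertex $t\in T$ loses its $\Qcal$-in-edge under $\Delta$ (or keeps it cancelled by a walk ending there — I need to track the two cases for whether $t\in\ter(\Wcal)$), and a vertex in $\ter(\Wcal)\cap B_0$ is a sink of the walk ending there. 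One then peels off, for each $x\in X$, the unique maximal forward path or ray starting at $x$ in $\Qcal\Delta\Wcal$; these are pairwise disjoint because the components are, and each ends either at a vertex of $Y$ or is infinite, which is exactly "from $X$ to $Y$" in the intended reading (rays being allowed).

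\textbf{The hard part.} The main obstacle is the careful bookkeeping at vertices of $V(\Qcal)$ that are visited by several walks or revisited by one walk — these are exactly where \hyperref[aw2]{(W2)} and \hyperref[aw3]{(W3)} permit collisions, and where edges of $\Qcal$ and edges of various $W_i$ all meet. I must verify that after taking $E(\Qcal)\Delta E(\Wcal)$ the degree still drops to at most one in each direction, ruling out genuine branch vertices, and in particular that no component is a cycle through such a vertex that would "eat" a potential source (a cycle in $\Qcal\Delta\Wcal$ would correspond to an alternating walk returning to its start, which the endpoint/source analysis must exclude, or at worst contributes nothing to the $X$-to-$Y$ count and can be discarded). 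I would also double-check the edge case $\ter(W)\in S$, i.e.\ a walk ending at a vertex of $S\subseteq B_1$: such a $\ter(W)$ is removed from $X$ (it lies in $\ter(\Wcal)$, hence not in $S\setminus\ter(\Wcal)$) and, since it is generally not in $B_0$, also not in $Y$, so it becomes an interior vertex of the concatenated structure — here the local picture is that the incoming walk-edge and the outgoing $\Qcal$-edge from $\ter(W)$ splice into a single path, which is consistent with the degree count. Once the degree bound and the source/sink identification are established, the conclusion is immediate by decomposing the bounded-degree graph $\Qcal\Delta\Wcal$ into its path/ray components and reading off those starting in $X$.
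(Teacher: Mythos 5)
Your proposal is correct and, after discarding the initial detour through the shifted dimaze, lands on exactly the argument the paper uses: a degree count in $\Qcal\Delta\Wcal$ showing that vertices outside $X\cup Y$ have equal in- and out-degree (both $0$ or both $1$), that vertices of $X$ are sources and vertices of $Y$ are sinks, so that every component meeting $X$ is a path ending in $Y$ or a ray. The bookkeeping you flag as the hard part (walk collisions on $V(\Qcal)$, walks ending in $S$, cycles and double rays not meeting $X$) is precisely what the paper's one-paragraph proof silently absorbs into that degree claim.
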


\begin{proof}
Every vertex in $\Qcal \Delta \Wcal  \setminus (X \cup Y)$ has in-degree and out-degree both $1$ or both $0$. Moreover, every vertex in $X$ has in-degree $0$ and out-degree $1$ (or $0$, if it is also in $Y$) and every vertex in $Y$ has out-degree $0$ and in-degree $1$ (or $0$, if it is also in $X$).
Therefore every (weakly) connected component of $\Qcal \Delta \Wcal$ meeting $X$ is either a path ending in $Y$ or a ray. 
\end{proof}

The following will be used to complete a ray to an outgoing comb in various situations.

\begin{lem} \label{thm:OR2OC}
Suppose $\Qcal$ is a topological linkage. Any ray $R$ that hits infinitely many vertices of $V(\Qcal)$ is the spine of an outgoing comb.
\end{lem}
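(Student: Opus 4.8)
The plan is to produce, on the ray $R$, an infinite family of pairwise vertex-disjoint finite paths to $B_0$, each meeting $R$ precisely in its initial vertex; together with $R$ as spine, such a family of ``spikes'' is exactly an outgoing comb contained in $(D,B_0)$, which is what the statement asks for. I record at the outset that no vertex of $R$ lies in $B_0$: every vertex of a forward ray has an outgoing edge, while $B_0$ consists of sinks. This will guarantee that the spikes produced below are non-trivial and that their attachment points are legitimate spine vertices.

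For each $Q\in\Qcal$ and $v\in V(Q)$, fix a finite \emph{completion} $\rho_Q(v)$ of $v$ to $B_0$: the terminal segment of $Q$ from $v$ if $Q$ ends in $B_0$; that segment followed by one spike of the fan if $Q$ ends at a fan centre; and, if $Q$ is the spine of an outgoing comb (the only case when $Q$ is infinite), the segment of $Q$ from $v$ up to the first attachment vertex of a comb-spike at or after $v$, followed by that spike. Each $\rho_Q(v)$ is finite and is contained in $V(Q)$ together with at most one spike. Two facts drive the argument: distinct members of $\Qcal$ are vertex-disjoint, so the ``along $Q$'' parts of completions on different members never meet; and for a fixed $Q$ and vertices $u$ before $v$ on $Q$, the completions are nested or disjoint, with $\rho_Q(v)\subseteq\rho_Q(u)$ in the nested case (never the reverse, since $\rho_Q(u)$ runs forward from $u$) --- one checks this by the three types of $Q$. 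Now suppose some $Q\in\Qcal$ meets $R$ in infinitely many vertices; then $Q$ is infinite, hence a comb spine. Enumerate $V(R)\cap V(Q)$ along $Q$ as $v_1,v_2,\dots$. Since the $v_k$ are infinitely many distinct vertices they cannot all lie in the finite set $\rho_Q(v_1)$, so by the nested-or-disjoint dichotomy a greedy selection works: having chosen pairwise disjoint completions $\rho_Q(v_{k_1}),\dots,\rho_Q(v_{k_m})$, their finite union omits all but finitely many of the $v_k$, and for any later omitted $v_k$ the completion $\rho_Q(v_k)$ is disjoint from all of them. Truncating each chosen completion at its last vertex on $R$ yields paths $\sigma_m$ meeting $R$ only in their initial vertex; they remain pairwise disjoint, hence have distinct initial vertices (a shared one would lie in the intersection), and those vertices, not being in $B_0$, make each $\sigma_m$ a non-trivial spike. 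So $R$ together with the $\sigma_m$ is the desired outgoing comb.

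In the remaining case every member of $\Qcal$ meets $R$ finitely, so $R$ meets infinitely many distinct members; enumerating $V(R)\cap V(\Qcal)$ along $R$ and thinning, I pick vertices $v_1,v_2,\dots$ occurring in this order on $R$ and lying on pairwise distinct members $Q^{(1)},Q^{(2)},\dots$, then build spikes greedily: at each stage take the next available $v_j$ whose completion can be routed to avoid the finite set of vertices already used --- exploiting, when $Q^{(j)}$ is a comb spine or ends at a fan, the freedom to swap among its infinitely many spikes --- and truncate its completion at its last vertex on $R$ as before. I expect the main obstacle to be precisely this last piece of bookkeeping: a completion may re-enter $R$, and if the auxiliary combs and fans of different members of $\Qcal$ overlap (only the topological paths themselves are required disjoint), a completion may meet another through a shared spike, so a single vertex could be hit by infinitely many completions. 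The truncate-at-the-last-bad-vertex device (which is also what forces the attachment vertices to be distinct), together with the finiteness of each completion, the disjointness of the spines, and the inexhaustible supply of spikes per comb or fan, should tame this; alternatively one first passes to a topological linkage whose auxiliary combs and fans are internally disjoint from the remaining topological paths. Verifying the two structural facts and that the final configuration literally matches the definition of an outgoing comb contained in $(D,B_0)$ is then routine.
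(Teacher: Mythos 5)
Your overall strategy coincides with the paper's: greedily produce infinitely many pairwise disjoint finite paths from $V(R)\cap V(\Qcal)$ to $B_0$, each chosen to avoid the finite set of vertices used so far, and truncate each at its last vertex on $R$ to obtain the spikes (the paper uses exactly this truncation device, writing $p_iP_i$ for the spike attached at the last vertex $p_i$ of $R$ on $P_i$). The difference is one of packaging: the paper certifies the existence of each new path by a single observation --- a vertex on a topological path cannot be separated from $B_0$ by any finite vertex set disjoint from that topological path --- and keeps the construction going by trimming, at each step, the finite initial segments of members of $\Qcal$ that meet the used vertices. That uniform formulation makes your case split (one member of $\Qcal$ meeting $R$ infinitely, versus infinitely many members each meeting it) and the nested-or-disjoint analysis of pre-assigned completions unnecessary. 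Your first case, as written, is complete.

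The one genuine gap is the step you flag yourself in the second case, and it is the crux of the lemma. It does close, and the worry about different members of $\Qcal$ sharing spikes or fan arms evaporates once the completions are chosen one at a time rather than fixed in advance: let $A$ be the finite union of the spikes already chosen. Since the members of $\Qcal$ are pairwise disjoint, at most $|A|$ of them meet $A$, so infinitely many of your $v_j$ lie on members $Q^{(j)}$ with $V(Q^{(j)})\cap A=\emptyset$; pick any such $v_j$. The terminal segment of $Q^{(j)}$ from $v_j$ avoids $A$ automatically, and because the spikes of an outgoing comb (respectively the arms of a linking fan) are infinitely many and pairwise disjoint, at most $|A|$ of them meet $A$, so some completion of $v_j$ through $Q^{(j)}$ avoids $A$ entirely. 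This is the missing sentence: with it the greedy selection never gets stuck, the chosen completions are pairwise disjoint by construction, and no preliminary ``cleaning'' of the auxiliary combs and fans is needed.
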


\begin{proof}
The first step is to inductively construct an infinite linkable subset of $V(R)$.
Let $\Qcal_0 :=\Qcal$ and $A_0 := \emptyset$. 
For $i \geq 0$, assume that $\Qcal_i$ is a topological linkage that intersects $V(R)$ infinitely but avoids the finite set of vertices $A_i$. 
Since it is not possible to separate a vertex on a topological path from $B_0$ by a finite set of vertices disjoint from that topological path, there exists a path $P_i$ from $V(R) \cap V(\Qcal_i)$ to $B_0$ avoiding $A_i$. 
Let $A_{i+1} := A_i \cup V(P_i)$ and $\Qcal_{i+1}$ obtained from $\Qcal_i$ by deleting from each of its elements the minimal initial segment that intersects $A_{i+1}$. 
As $\Qcal_{i+1}$ remains a topological linkage that intersects $V(R)$ infinitely, we can continue the procedure.
By construction $\{ P_i: i \in \Nbb\}$ is an infinite set of disjoint finite paths from a subset of $V(R)$ to $B_0$. Let $p_i \in P_i$ be the last vertex of $R$ on $P_i$, then $R$ is the spine of the outgoing comb:
$\displaystyle{R \cup \bigcup_{i \in \Nbb} p_i P_i}$.
\end{proof}

\begin{cor}\label{thm:ORinSymmetricDifference2OC}
Any ray provided by \autoref{thm:aws2paths} is in fact the spine of an outgoing comb if $\Qcal$ is a topological linkage, and the infinite forward segments of the walks in $\Wcal$ are the spines of outgoing combs.
\end{cor}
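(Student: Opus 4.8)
The plan is to reduce this corollary to \autoref{thm:OR2OC}. Let $R$ be a ray produced by \autoref{thm:aws2paths}, so $R$ is a connected component of $\Qcal \Delta \Wcal$ that meets $X = J \cup (S \setminus \ter(\Wcal))$. Since $R$ is infinite and $E(R) \subseteq E(\Qcal) \Delta E(\Wcal)$, the edge set of $R$ decomposes into segments alternately traversed along edges of $\Qcal$ and along edges of $\Wcal$ (after orienting $R$ away from its initial vertex). The key dichotomy is: either $R$ eventually follows a single tail of some walk $W \in \Wcal$ forever, or $R$ hits $V(\Qcal)$ infinitely often.

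In the first case, that tail of $W$ is an infinite forward segment of a walk in $\Wcal$, which by hypothesis is the spine of an outgoing comb $C \subseteq (D, B_0)$. Then a tail of $R$ coincides with a tail of the spine of $C$, and by discarding the finitely many spikes of $C$ whose feet lie on the initial (non-shared) part, we obtain an outgoing comb whose spine is $R$; one has to check that only finitely many spikes are lost, which holds because $R$ and the spine of $C$ share a common tail. In the second case, $R$ hits $V(\Qcal)$ infinitely often, and since $\Qcal$ is assumed to be a topological linkage, \autoref{thm:OR2OC} applies directly to give that $R$ is the spine of an outgoing comb in $(D, B_0)$.

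It remains to argue that these two cases are exhaustive and that the resulting comb is genuinely contained in $(D, B_0)$ (in particular that the spikes end at exits). Exhaustiveness follows by looking at the alternating structure: if $R$ does not have a tail inside a single walk of $\Wcal$, then infinitely many of its $\Qcal$-segments are nonempty, and each nonempty $\Qcal$-segment contributes a vertex of $V(\Qcal)$ to $R$. For containment, in the first case the comb is a subcomb of $C \subseteq (D, B_0)$, and in the second case \autoref{thm:OR2OC} already delivers a comb contained in $(D, B_0)$ since the paths $P_i$ constructed there end in $B_0$.

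The main obstacle I anticipate is the bookkeeping in the first case: one must verify carefully that a tail of $R$ really is a tail of the spine of the outgoing comb guaranteed for that walk in $\Wcal$ — i.e.\ that the forward direction of $R$ agrees with the forward direction of the spine on their common part — and that the spikes of $C$ meeting this common tail are, up to finitely many, still available as spikes for $R$. This is essentially a matter of tracking orientations through the symmetric difference $\Qcal \Delta \Wcal$ together with conditions \hyperref[aw1]{(W1)}--\hyperref[aw4]{(W4)}, but it is where the argument needs to be written out with some care rather than waved through.
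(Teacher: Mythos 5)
Your proposal is correct and follows essentially the same route as the paper's proof: the same decomposition of the ray into alternating forward segments of $\Wcal$ and $\Qcal$, the same dichotomy (a tail of $R$ lies in a single walk of $\Wcal$, or $R$ meets $V(\Qcal)$ infinitely often), and the same appeal to \autoref{thm:OR2OC} in the second case. The paper states this more tersely, leaving implicit the bookkeeping about discarding finitely many spikes that you spell out.
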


\begin{proof}
Observe that a ray $R$ constructed in \autoref{thm:aws2paths} is obtained by alternately following the forward segments of the walks in $\Wcal$ and the forward segments of elements in $\Qcal$. 

Either a tail of $R$ coincides with a tail of a walk in $\Wcal$, and we are done by assumption; or $R$ hits infinitely many vertices of $V(\Qcal)$, and \autoref{thm:OR2OC} applies.
\end{proof}

With \autoref{thm:aws2paths} we can transform disjoint alternating walks into disjoint paths or rays. A reverse transform is described as follows. 

\begin{lem}\label{thm:maximalPQAWsDisjoint}
Let $\Pcal$ and $\Qcal$ be two sets of disjoint paths or rays.
 Let $\Wcal$ be a set of maximal $\Pcal$-$\Qcal$-alternating walks starting in distinct vertices of $\ini(\Pcal)$. Then the walks in $\Wcal$ are disjoint and 
can only end in $(\ter(\Pcal) \setminus T) \cup S$. 
\end{lem}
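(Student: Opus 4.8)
The plan is to analyze each walk in $\Wcal$ locally, vertex by vertex, using the defining conditions of a $\Pcal$-$\Qcal$-alternating walk together with maximality. Recall that a $\Pcal$-$\Qcal$-alternating walk $W = w_0 e_0 w_1 e_1 \ldots$ is a $\Qcal$-alternating walk all of whose edges lie in $E(\Pcal)\,\Delta\,E(\Qcal)$, satisfying (W1)--(W4). I would prove two things: first, that each $W\in\Wcal$, if it terminates at a vertex $w_n$, can only have $w_n\in(\ter(\Pcal)\setminus T)\cup S$; and second, that distinct walks in $\Wcal$ are disjoint in the sense defined (edge disjoint, sharing vertices only in $V(\Qcal)$, and with distinct terminal vertices).

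**Where a maximal walk can end.** Suppose $W = w_0 e_0 \ldots e_{n-1} w_n$ is a finite walk in $\Wcal$, and suppose for contradiction that $w_n\notin(\ter(\Pcal)\setminus T)\cup S$. The last edge $e_{n-1}$ lies in $E(\Pcal)\,\Delta\,E(\Qcal)$; I would split into the two cases according to whether $e_{n-1}\in E(\Qcal)$ or $e_{n-1}\in E(\Pcal)$. If $e_{n-1}\in E(\Qcal)$, then by (W1) $e_{n-1}=(w_n,w_{n-1})$, so $w_n$ is the tail of a $\Qcal$-edge, hence $w_n\in V(\Qcal)$ and $w_n$ is not the initial vertex of its $\Qcal$-path; thus either $w_n\in S=\ini(\Qcal)$ — excluded — or there is a further $\Qcal$-edge out of $w_n$, which is not $e_{n-1}$, and following it keeps us inside $E(\Pcal)\,\Delta\,E(\Qcal)$ provided that edge is not also in $E(\Pcal)$; here I would use that $\Qcal$-edges leaving $w_n$ that also lie in $\Pcal$ cannot be chosen, but then $w_n\in V(\Pcal)$ forces a $\Pcal$-edge incident to $w_n$ (by (W4)) which is available to extend $W$. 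The symmetric case $e_{n-1}\in E(\Pcal)$ is handled the same way with the roles of $\Pcal$ and $\Qcal$ and conditions (W3)/(W4) swapped: the continuing edge is a $\Qcal$-edge out of $w_n$ or, if none exists, $w_n\in\ter(\Pcal)$ or $w_n$ has a $\Pcal$-edge available. In every subcase, the hypothesis that $w_n$ avoids $(\ter(\Pcal)\setminus T)\cup S$ lets us append one more edge without violating (W1)--(W4) or repeating an edge, contradicting maximality. One has to be careful that the appended edge is genuinely new — this follows because a repeated edge would have to have been traversed in the opposite $\Qcal$-orientation, which (W1) forbids.

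**Disjointness of the walks.** For two walks $W_1,W_2\in\Wcal$ starting at distinct vertices of $\ini(\Pcal)$, I would argue edge disjointness first: if they shared an edge $e$, trace both walks backward from their first common edge; because each $w_i\notin V(\Qcal)$ has in- and out-degree at most one in $\Qcal$ and likewise in $\Pcal$, the backward continuation from a common edge is forced, so $W_1$ and $W_2$ would agree all the way back to a common start, contradicting distinct initial vertices — unless they first meet at a vertex of $V(\Qcal)$, which is exactly what the disjointness definition permits. This same forcing argument, run at a shared vertex $w\notin V(\Qcal)$, shows $V(W_1)\cap V(W_2)\subseteq V(\Qcal)$: at such a $w$ the incoming and outgoing edges used by each walk are determined by $w$'s unique $\Pcal$- and $\Qcal$-incidences, so the two walks would locally coincide and, propagating backward, share a start. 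Finally, for distinct terminal vertices: if $W_1$ and $W_2$ both ended at the same vertex $w$, then since $w\notin V(\Qcal)$ would force them to coincide, we may assume $w\in V(\Qcal)$; but from the ending analysis above, a walk can only stop at $w\in S$ or $w\in\ter(\Pcal)\setminus T$, and in either case the terminal configuration (which $\Qcal$- or $\Pcal$-edge was last used, in which direction) is pinned down by $w$, again forcing the two walks to share their final edge and hence, backward, their start.

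**Main obstacle.** The delicate point is the maximality argument: one must check in each of the several local cases that the extending edge exists, is not already used by $W$, and that appending it preserves all of (W1)--(W4) — in particular that adding a vertex $w_{n+1}$ doesn't retroactively violate (W2) (no forbidden repetition) or (W3)/(W4) at $w_n$ itself. Organizing this case analysis cleanly — ideally by a single lemma-like statement of the form "at an interior or terminal vertex $w$ of an alternating walk, the pair of used edges at $w$ is one of the following finitely many types" — is where the real work lies; the disjointness part is then a routine consequence of the same local rigidity.
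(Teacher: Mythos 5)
Your strategy is the paper's: show that a maximal walk can only stop in $(\ter(\Pcal)\setminus T)\cup S$ because any other ending admits a one-edge extension, and derive disjointness from local rigidity --- at an interior vertex of a $\Pcal$-$\Qcal$-alternating walk either of the two incident walk-edges determines the other, so a shared edge, or a shared vertex outside $V(\Qcal)$, propagates backward to force a common initial vertex, contradicting that the walks start at distinct vertices of $\ini(\Pcal)$. The ending analysis, the edge-disjointness argument, and the proof that $V(W_1)\cap V(W_2)\subseteq V(\Qcal)$ all match the paper's.

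The one step that fails as stated is your argument for $\ter(W_1)\neq\ter(W_2)$. You claim that if two maximal walks end at the same vertex $w$ (necessarily in $V(\Qcal)$ by the previous step), then ``the terminal configuration \ldots is pinned down by $w$'', so they share their last edge and rigidity finishes the job. That is false in the mixed case: one walk may reach $w$ by traversing backward the unique $\Qcal$-edge whose tail is $w$, while the other reaches $w$ forward along the unique $\Pcal$-edge whose head is $w$; these are distinct edges, and no rigidity is available. (Rigidity does dispose of the two pure cases, since $\Qcal$ has at most one edge with tail $w$ and $\Pcal$ at most one edge with head $w$.) The paper resolves the mixed case by another maximality argument rather than by rigidity: since $w$ is the tail of a $\Qcal$-edge it is not a sink, hence the path of $\Pcal$ through $w$ does not terminate there, and the walk that ended with the backward $\Qcal$-edge can be prolonged along that path of $\Pcal$, contradicting its maximality. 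With that substitution your proof goes through; the remaining bookkeeping about (W1)--(W4) for appended edges is as you describe.
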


\begin{proof}
Let $W = w_0 e_0 w_1 \ldots$ be a maximal $\Pcal$-$\Qcal$-alternating walk. Then $W$ is a trivial walk if and only if $w_0 \in (\ter(\Pcal) \setminus T) \cup S$. If $W$ is nontrivial then $e_0 \in E(\Qcal)$ if and only if $w_0 \in V(\Qcal)$.

Let $W_1$ and $W_2\in \Wcal$. 
Note that for any interior vertex $w_i$ of a $\Pcal$-$\Qcal$-alternating walk, it follows from the definition that either edge in $\{e_{i-1}, e_i \}$ determines uniquely the other. 
So if $W_1$ and $W_2$ share an edge, then a reduction to their common initial vertex shows that they are equal by their maximality. 
Moreover if the two walks share a vertex $v\notin V(\Qcal)$, then they are equal since they share the edge of $\Pcal$ whose terminal vertex is~$v$.

Therefore, if $W_1\neq W_2$ and they end at the same vertex $v$, then $v\in V(\Pcal) \cap V(\Qcal)$. More precisely, we may assume that $v$ is the initial vertex of an edge in $E(\Qcal) \cap E(W_1)$ and the terminal vertex of an edge $e \in E(\Pcal) \cap E(W_2)$ (both the last edges of their alternating walk). Since $v$ is the initial vertex of some edge, it cannot be in $B_0$, so the path (or ray) in $\Pcal$ containing $e$ does not end at $v$. Hence we can extend $W_1$ contradicting its maximality.

Similarly we can extend a $\Pcal$-$\Qcal$-alternating walk that ends in some vertex $v \in \ter(\Pcal) \cap \ter(\Qcal)$ by the edge in $E(\Qcal)$ that has $v$ as its terminal vertex, unless $v\in \ini(\Qcal)$.
So $\Wcal$ is a set of disjoint $\Pcal$-$\Qcal$-alternating walks that can only end in $(\ter(\Pcal) \setminus T) \cup S$.
\end{proof}

Now we investigate when a dimaze and its $\Qcal$-shifted dimaze present the same strict gammoid.

\begin{lem}\label{thm:linkableInShifted2linkableInDB0}
Suppose that $\Qcal$ is a linkage from $S$ onto $T$ and $I$ a set linkable in $(D_1, B_1)$. Then $I$ is linkable in $(D, B_0)$ if (i) $I \setminus S$ is finite or (ii) $(D, B_0)$ is $C^O$-free.
\end{lem}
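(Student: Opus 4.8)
The plan is to transform a linkage witnessing that $I$ is linkable in the shifted dimaze $(D_1,B_1)$ into a linkage in $(D,B_0)$, using the correspondence between paths/rays in $D_1$ and $\Qcal$-alternating walks in $D$ established in \autoref{thm:AW-path_shift}, and then the ``untwisting'' lemma \autoref{thm:aws2paths}. Concretely, let $\Pcal_1$ be a linkage from $I$ to $B_1$ in $(D_1,B_1)$. Split $I$ according to whether the corresponding path in $\Pcal_1$ ends in $B_0\setminus T$ or in $S$; equivalently, look at $\cev{\Qcal}(\Pcal_1)$, which by \autoref{thm:AW-path_shift}(ii) is a set $\Wcal$ of disjoint $\Qcal$-alternating walks starting in $I$, each ending in $B_1=(B_0\setminus T)\cup S$ (or infinite — but a linkage in $(D_1,B_1)$ consists only of finite paths, so actually all walks are finite here; still, the $C^O$-free hypothesis will be what rules out rays appearing after the untwisting step). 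Set $J:=I\setminus S$; then apply \autoref{thm:aws2paths} to $J$ and $\Wcal$ to obtain a set of disjoint paths or rays from $X=J\cup(S\setminus\ter(\Wcal))$ to $Y=T\cup(\ter(\Wcal)\cap B_0)$ inside $\Qcal\,\Delta\,\Wcal$.

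The point is that $\Qcal$ links $S$ onto $T$, so $\Qcal$ itself contributes the missing part: combining the paths/rays from \autoref{thm:aws2paths} with the sub-paths of $\Qcal$ that start at the vertices of $S$ not used as endpoints of $\Wcal$-walks, and observing that $S\setminus(I\setminus S)$-bookkeeping works out, we should get disjoint paths or rays covering all of $I$ and ending in $T\cup(\text{some exits in }B_0)\subseteq B_0$. More carefully: the vertices of $X$ that need to be connected to $B_0$ are exactly $J\cup(S\setminus\ter(\Wcal))$; the elements of $S\setminus\ter(\Wcal)$ are initial vertices of paths of $\Qcal$, which end in $T\subseteq B_0$, and these can be glued on; and $T\subseteq B_0$ while $\ter(\Wcal)\cap B_0\subseteq B_0$. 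So modulo checking disjointness (which \autoref{thm:aws2paths} and the disjointness of $\Wcal$ give, together with the fact that $\Qcal$-segments not meeting $\ter(\Wcal)$ stay out of the symmetric difference appropriately), we obtain a family of disjoint paths or rays from $I$ into $B_0$.

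The remaining — and genuinely necessary — step is to deal with the rays. A ray cannot be part of a linkage, so if the family produced above contains a ray $R$, we must do something. This is exactly where the two hypotheses (i) and (ii) come in. Under (i), $I\setminus S$ is finite; one argues that only finitely many of the constructed paths/rays start in $I\setminus S$, and after removing the finitely many walks in $\Wcal$ one can check that no ray arises, or that any ray can be rerouted — essentially because outside a finite set the construction follows $\Qcal$, which is a genuine (finite-path) linkage onto $T$, so beyond some point each component reaches $T\subseteq B_0$ and terminates. Under (ii), $(D,B_0)$ is $C^O$-free, so no outgoing comb is present; but a ray $R$ in $\Qcal\,\Delta\,\Wcal$ that hits infinitely many vertices of $V(\Qcal)$ would, by \autoref{thm:OR2OC} (applicable once we note a linkage is in particular a topological linkage), be the spine of an outgoing comb — contradiction; and a ray eventually disjoint from $V(\Qcal)$ would be a tail of a walk in $\Wcal$, but $\Wcal$ consists of finite walks, so that cannot happen either. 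Hence under (ii) no ray appears at all, and we directly obtain a linkage in $(D,B_0)$, proving $I\in\Ical(M_L(D,B_0))$.

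I expect the main obstacle to be the careful bookkeeping of which vertices of $S$ are endpoints of $\Wcal$-walks versus initial vertices of $\Qcal$-paths, and verifying that gluing $\Qcal$-segments onto the output of \autoref{thm:aws2paths} preserves disjointness and does not create unwanted rays — in other words, making precise the informal claim ``outside a finite set everything follows $\Qcal$.'' The ray-elimination argument under hypothesis (i) is the delicate one: one must confirm that finiteness of $I\setminus S$ forces the symmetric difference $\Qcal\,\Delta\,\Wcal$ to have only finitely many ``active'' components and that each such component, after finitely many alternations, merges into a $\Qcal$-path and so terminates in $T$. Under (ii) the argument is cleaner because \autoref{thm:OR2OC} does the work, but one still needs the small observation that a linkage, having all terminal vertices in $B_0$, qualifies as a topological linkage so that \autoref{thm:OR2OC} applies.
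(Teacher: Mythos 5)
Your proposal follows essentially the same route as the paper: convert the linkage in $(D_1,B_1)$ into finite $\Qcal$-alternating walks via \autoref{thm:AW-path_shift}, pass to the walks starting in $J=I\setminus S$, apply \autoref{thm:aws2paths}, and rule out rays using finiteness of $E(\Wcal)$ in case (i) and \autoref{thm:OR2OC} together with $C^O$-freeness in case (ii). Two small corrections: \autoref{thm:aws2paths} requires $\Wcal$ to consist precisely of the walks starting in $J$ (so discard those starting in $I\cap S$ rather than splitting by endpoints), and no extra gluing of $\Qcal$-segments is needed, since that lemma already produces disjoint paths or rays from all of $J\cup(S\setminus\ter(\Wcal))\supseteq I$ to a subset of $B_0$.
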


\begin{proof}
There is a set of disjoint finite paths from $I$ to $B_1$ in $(D_1, B_1)$, which, by \autoref{thm:AW-path_shift}, gives rise to a set of disjoint finite $\Qcal$-alternating walks from $I$ to $B_1$ in $(D, B_0)$.
Let $\Wcal$ be the subset of those walks starting in $J := I \setminus S$. Then \autoref{thm:aws2paths} provides a set $\Pcal$ of disjoint paths or rays from $J \cup (S \setminus \ter(\Wcal)) \supseteq I$ to $Y \subseteq B_0$. 
It remains to argue that $\Pcal$ does not contain any ray. Indeed, any such ray meets infinitely many paths in $\Qcal$. But by \autoref{thm:OR2OC}, the ray is the spine of an outgoing comb, which is a contradiction. 
\end{proof}

In fact the converse of (ii) holds.

\begin{lem}\label{thm:linkableInDB02linkableInShifted_noOC}
Suppose that $(D, B_0)$ is $C^O$-free, and $\Qcal$ is a linkage from $S$ onto $T$ such that there exists no linkage from $S$ to a proper subset of $T$. Then a linkable set $I$ in $(D, B_0)$ is also linkable in $(D_1, B_1)$,
and $(D_1, B_1)$ is $C^O$-free.
\end{lem}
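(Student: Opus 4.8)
The plan is to run the correspondence between $\Qcal$-alternating walks in $(D,B_0)$ and paths/rays in $(D_1,B_1)$ from \autoref{thm:AW-path_shift} in the reverse direction from the previous lemma, using as input a linkage $\Pcal$ of $I$ in $(D,B_0)$. First I would take the linkage $\Pcal$ of $I$ to $B_0$ in $(D,B_0)$ together with the given linkage $\Qcal$ from $S$ onto $T$, and form a set $\Wcal$ of maximal $\Pcal$-$\Qcal$-alternating walks, one starting from each vertex of $\ini(\Pcal)=I$ (a trivial walk at a vertex already in $(\ter(\Pcal)\setminus T)\cup S = B_0 \cup S$ minus $T$-part is allowed). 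By \autoref{thm:maximalPQAWsDisjoint}, these walks are disjoint and can only end in $(\ter(\Pcal)\setminus T)\cup S \subseteq B_1$. By \autoref{thm:AW-path_shift}(i), applying $\vec{\Qcal}$ sends each such walk to a path or ray in $(D_1,B_1)$, disjoint, with the finite walks ending at their original endpoints in $B_1$; the union is a set of disjoint paths/rays in $(D_1,B_1)$ starting from $\vec{\Qcal}(I)$.

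The first obstacle is that $\vec{\Qcal}$ acts as the identity only off $V(\Qcal)$, so the images start from $\vec{\Qcal}(I)$ rather than $I$; I need to argue $\vec{\Qcal}(I) = I$, i.e.\ that $I$ avoids $V(\Qcal)$ entirely — wait, that need not hold. The correct fix is that a vertex $x\in I\cap V(\Qcal)$ begins an alternating walk; if $x\notin S$ then since $x$ is not an exit the walk is nontrivial and leaves along the edge of $\Qcal$ out of $x$, so $x$ is \emph{not} deleted by $\vec{\Qcal}$ and the image ray/path still starts at $x$; if $x\in S$ the trivial walk at $x$ maps to $x$ as well. So in all cases the starting vertices are preserved and we do get a linkage of $I$ in $(D_1,B_1)$. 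The second, and genuinely substantive, obstacle is ruling out \emph{rays} among the $\vec{\Qcal}$-images: a finite linkage $\Pcal$ of a possibly infinite set $I$ can still produce an infinite alternating walk. Here is where $C^O$-freeness and the minimality of $T$ enter. An infinite $\Pcal$-$\Qcal$-alternating walk $W$ must use infinitely many edges of $\Qcal$ (since $\Pcal$ consists of finite paths), hence its image ray $R=\vec{\Qcal}(W)$ meets $V(\Qcal)$ infinitely; but $\Qcal$ is (the core of) a topological linkage, so by \autoref{thm:OR2OC} (applied in $(D,B_0)$ after translating $R$ back via $\cev{\Qcal}$, or directly) such a ray would yield an outgoing comb in $(D,B_0)$, contradicting $C^O$-freeness. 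Alternatively, and perhaps more cleanly, an infinite alternating walk together with \autoref{thm:AW2Linkage}(i)-style augmentation along $\Qcal$ would produce a linkage from $S$ to a \emph{proper} subset of $T$ — contradicting the minimality hypothesis — since the walk would reroute some $\Qcal$-path away from its terminus in $T$ and down the tail; I would pick whichever of these two arguments is cleanest once the indices are pinned down. Either way, $\Pcal_1 := \vec{\Qcal}(\Wcal)$ is a genuine linkage of $I$ to $B_1$ in $(D_1,B_1)$.

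For the second assertion, that $(D_1,B_1)$ is $C^O$-free, I would argue by contradiction: suppose $(D_1,B_1)$ contains a subdivided $C^O$, with spine ray $R_1$ and infinitely many disjoint spikes ending at exits in $B_1=(B_0\setminus T)\cup S$. Pull everything back with $\cev{\Qcal}$: by \autoref{thm:AW-path_shift}(ii) the spine maps to an infinite $\Qcal$-alternating walk and each spike to a finite $\Qcal$-alternating walk ending in $B_1$, all disjoint. Then \autoref{thm:aws2paths} (or \autoref{thm:ORinSymmetricDifference2OC}) converts these disjoint alternating walks into disjoint rays/paths in $\Qcal\Delta\Wcal \subseteq D$; the key point from \autoref{thm:ORinSymmetricDifference2OC} is that any resulting ray is itself the spine of an outgoing comb in $(D,B_0)$, because $\Qcal$ is a topological linkage. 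Since the pulled-back configuration still has infinitely many disjoint finite pieces feeding into a ray, this produces an outgoing comb in $(D,B_0)$, contradicting $C^O$-freeness. The main technical care needed throughout is bookkeeping the endpoints — ensuring the finite pieces really terminate in $B_0$ and that the disjointness clauses of \autoref{thm:maximalPQAWsDisjoint} and \autoref{thm:AW-path_shift} are met — but no new idea beyond the shifting correspondence and \autoref{thm:OR2OC} should be required.
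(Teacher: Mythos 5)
Your overall strategy for the first assertion -- take a linkage $\Pcal$ of $I$, form the maximal $\Pcal$-$\Qcal$-alternating walks from $I$, invoke \autoref{thm:maximalPQAWsDisjoint} and \autoref{thm:AW-path_shift} -- is exactly the paper's, but your treatment of a possible infinite walk $W$ has a real gap. You offer two ``alternative'' ways to kill $W$ and say you would pick whichever is cleaner; in fact neither works alone, because they are the two complementary cases of one argument. The correct move (the paper's) is to apply \autoref{thm:aws2paths} to $\{W\}$, obtaining in $\Qcal\Delta\{W\}\subseteq D$ a set $\Rcal$ of disjoint paths or rays from $S+v_0$ to $T$. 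If some component is a ray, \autoref{thm:ORinSymmetricDifference2OC} (whose hypothesis is vacuous here, since the forward segments of $W$ are subsegments of the finite paths in $\Pcal$) makes it the spine of an outgoing comb, contradicting $C^O$-freeness; if all components are paths, $\Rcal$ is a linkage of $S+v_0$ to $T$ and hence $S$ is linked to a \emph{proper} subset of $T$, contradicting minimality. When $S$ is infinite the all-paths case genuinely occurs, so $C^O$-freeness alone does not suffice; conversely the ray case blocks the augmentation, so minimality alone does not suffice. Your version of the $C^O$ argument is also mechanically wrong: the ray $\vec{\Qcal}(W)$ lives in $D_1$, where $\Qcal$ is not a topological linkage (its edges are reversed), so \autoref{thm:OR2OC} does not apply ``directly'', and pulling it back via $\cev{\Qcal}$ returns the walk $W$, not a ray. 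The outgoing comb must be found in $\Qcal\Delta\{W\}$, not in the image of $W$.

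For the second assertion the gap is larger. You pull back the whole comb and invoke \autoref{thm:aws2paths} and \autoref{thm:ORinSymmetricDifference2OC}, but the hypothesis of \autoref{thm:ORinSymmetricDifference2OC} -- that the infinite forward segments of the walks in $\Wcal$ are already spines of outgoing combs -- is precisely what is unknown for $\cev{\Qcal}(R_1)$, which is the unique infinite walk in your collection; you cannot cite that corollary here. Moreover, the symmetric difference destroys the comb structure, so ``infinitely many finite pieces feeding into a ray'' does not survive, and you only claim a contradiction with $C^O$-freeness, leaving the case where $\Qcal\Delta\Wcal$ has no ray component unhandled. The paper instead pulls back only the spine to $W:=\cev{\Qcal}(R_1)$ and splits on whether $W$ has an infinite forward tail $R'$. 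If it does, $R'$ carries an infinite subset of the spine that is linkable to $B_1$ in $(D_1,B_1)$ via the spikes; by \autoref{thm:linkableInShifted2linkableInDB0}(ii) that subset is linkable in $(D,B_0)$, and \autoref{thm:OR2OC} then makes $R'$ the spine of a forbidden outgoing comb. If it does not, one repeats the two-case analysis from the first part on $\{W\}$, reaching a contradiction with either $C^O$-freeness or the minimality of $T$. Your proposal is missing both this case split and the use of \autoref{thm:linkableInShifted2linkableInDB0}(ii).
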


\begin{proof}
For the linkability of $I$ it suffices by \autoref{thm:AW-path_shift} to construct a set of disjoint finite $\Qcal$-alternating walks from $I$ to $B_1$.
Let $\Pcal$ be a linkage of $I$ in $(D, B_0)$. 

For each vertex $v \in I$ let $W_v$ be the maximal $\Pcal$-$\Qcal$-alternating walk starting in $v$. By \autoref{thm:maximalPQAWsDisjoint}, $\Wcal := \{ W_v: v \in I \}$ is a set of disjoint $\Qcal$-alternating walks that can only end in $(\ter(\Pcal) \setminus T) \cup S \subseteq B_1$. 

If there is an infinite alternating walk $W = W_{v_0}$ in $\Wcal$, then \autoref{thm:aws2paths} applied on just this walk gives us a set $\Rcal$ of disjoint paths or rays from 
$S + v_0$ to $T$. 
Since the forward segments of $W$ are subsegments of paths in $\Pcal$, by \autoref{thm:ORinSymmetricDifference2OC} any ray in $\Rcal$ would extend to a forbidden outgoing comb.
Thus, $\Rcal$ is a linkage of $S+v_0$ to $T$. In particular, $S$ is linked to a proper subset of $T$ contradicting the minimality of $T$. 
Hence $\Wcal$ consists of finite disjoint $\Qcal$-alternating walks, as desired.

For the second statement suppose that $(D_1, B_1)$ contains an outgoing comb whose spine $R$ starts at $v_0 \notin S$. Then $W :=\cev{\Qcal}(R)$ is a $\Qcal$-alternating walk in $(D, B_0)$ by \autoref{thm:AW-path_shift}.
Any infinite forward segment $R'$ of $W$ contains an infinite subset linkable to $B_1$ in $(D_1, B_1)$. By \autoref{thm:linkableInShifted2linkableInDB0}(ii) this subset is also linkable in $(D, B_0)$, so $R'$ is the spine of an outgoing comb by \autoref{thm:OR2OC}, which is a contradiction. 

On the other hand, suppose that $W$ does not have an infinite forward tail. By investigating $W$ as we did with $W_{v_0}$ above, we arrive at a contradiction. Hence, there does not exist any outgoing comb in $(D_1, B_1)$.
\end{proof}

For later applications, we note the following refinement. 

\begin{cor}\label{thm:noFanInShifted}
If $(D, B_0)$ is $F^\infty$-free as well, then so is $(D_1, B_1)$.
\end{cor}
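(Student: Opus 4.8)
The plan is to mimic the proof of the second statement of \autoref{thm:linkableInDB02linkableInShifted_noOC}, replacing ``outgoing comb'' by ``linking fan'' throughout, while using the fact that $(D,B_0)$ is now both $C^O$-free and $F^\infty$-free. So suppose, for contradiction, that $(D_1,B_1)$ contains a subdivided linking fan $\mathcal F$: infinitely many disjoint paths sharing only a common initial vertex $c$ (the centre) and ending at exits in $B_1$. I would first reduce to the case $c\notin S$: if $c\in S$, note that in $D_1$ the only edges out of $c$ that were not already in $D$ come from reversing the $\Qcal$-edge out of $c$ in $D$, which changes only one edge at $c$; so deleting at most one spike of $\mathcal F$ yields a linking fan whose centre we may re-root one step along, and after finitely many such steps (or simply by choosing the centre to be an interior vertex of one long spike, which is still the centre of an infinite sub-fan) we may assume $c\notin S$.

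Next, pull the fan back through $\cev{\Qcal}$. Each spike $P_i$ of $\mathcal F$ (a path in $D_1$ from $c$ to some $t_i\in B_1$) maps under $\cev{\Qcal}$ to a finite $\Qcal$-alternating walk $W_i:=\cev{\Qcal}(P_i)$ in $(D,B_0)$ ending in $t_i$, by \autoref{thm:AW-path_shift}(ii); and disjointness of the $P_i$ away from $c$ gives disjointness of the $W_i$ away from $V(\Qcal)$ (and away from the single vertex $\cev{\Qcal}(c)$ if $c\in V(\Qcal)$). Now apply \autoref{thm:aws2paths} with $J:=\{\,\cev{\Qcal}(c)\,\}$ — or more precisely, argue as in \autoref{thm:linkableInShifted2linkableInDB0} that the union $\Qcal\,\Delta\,\{W_i\}$ decomposes into paths and rays. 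Because the $W_i$ all emanate from the single vertex $\cev{\Qcal}(c)$, following the component structure of $\Qcal\,\Delta\,\{W_i\}$ one obtains, for each $i$, either a path from a neighbourhood of $\cev{\Qcal}(c)$ to an exit in $B_0$, or a ray; and since $(D,B_0)$ is $C^O$-free, \autoref{thm:OR2OC} (applied via the observation in \autoref{thm:ORinSymmetricDifference2OC}, as the forward segments of the $W_i$ are subsegments of paths of $\mathcal F$) rules out rays. Hence we recover infinitely many paths in $(D,B_0)$, all essentially starting near $\cev{\Qcal}(c)$ and reaching $B_0$, pairwise meeting only in a bounded region. By a standard fan-lemma/pigeonhole argument — infinitely many disjoint-except-near-a-point paths to $B_0$, after pruning a finite part — this produces a subdivided linking fan $F^\infty$ inside $(D,B_0)$, contradicting $F^\infty$-freeness.

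The main obstacle I expect is the bookkeeping at the centre: unlike the $C^O$ case, where a single spine is pulled back, here infinitely many spikes share the vertex $c$, and under $\cev{\Qcal}$ the vertex $c$ (if in $V(\Qcal)$) gets replaced by a short detour $(\cev{\Qcal}(c))\,w\,(\dots)$, so the images $W_i$ need not literally share a common first vertex — they share a common \emph{initial segment} of bounded length. One must check that after passing to $\Qcal\,\Delta\,\{W_i\}$ and extracting paths/rays, the resulting paths to $B_0$ still emanate from a common finite set of vertices, so that a genuine $F^\infty$ (after contracting/subdividing that finite core) appears; this is where I would be most careful, using that only finitely many of the $W_i$ can use any fixed vertex of $V(\Qcal)$ near $c$ (disjointness of the $P_i$ forces each vertex of $V(\Qcal)$ to lie on at most one $W_i$, except $\cev{\Qcal}(c)$ itself). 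Everything else is a direct transcription of the already-established machinery, so the corollary should follow with a short paragraph once this point is handled.
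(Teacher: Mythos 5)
Your strategy---pull the fan back through $\cev{\Qcal}$, use $C^O$-freeness to exclude rays, and reassemble a linking fan in $(D,B_0)$---is in essence the paper's route, but it strains exactly where you say it does, and that strain is a genuine gap as written. \autoref{thm:AW-path_shift}(ii) and \autoref{thm:aws2paths} are stated only for \emph{disjoint} paths and \emph{disjoint} walks; your $W_i=\cev{\Qcal}(P_i)$ all pass through $\cev{\Qcal}(c)$, so in $\Qcal\,\Delta\,\{W_i\}$ that vertex has infinite out-degree, its component is neither a path nor a ray, and the step ``for each $i$ one obtains a path or a ray'' does not go through without first surgering the centre. Your proposed repair (a pigeonhole argument on paths meeting only in a ``bounded region'', then contracting a finite core) is left as a sketch precisely at the load-bearing point.

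The clean fix is to discard the centre \emph{before} pulling back, which is what the paper does. Let $X$ be the infinite set of first vertices of the spikes after $c$; the tails of the spikes are genuinely disjoint, so $X$ is linkable in $(D_1,B_1)$, and \autoref{thm:linkableInShifted2linkableInDB0}(ii)---which already packages the $\cev{\Qcal}$-pullback, \autoref{thm:aws2paths}, and the $C^O$-free exclusion of rays that you re-derive by hand---gives that $X$ is linkable in $(D,B_0)$. Since every out-neighbour of $c$ in $D_1$, with at most one exception, is an out-neighbour of $\cev{\Qcal}(c)$ in $D$, this linkage yields a forbidden linking fan with centre $\cev{\Qcal}(c)$; no core-contraction is needed. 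Two further remarks: your preliminary reduction to $c\notin S$ is unnecessary, since the vertices of $S$ are sinks of $D_1$ (the only $D_1$-edge at $s\in S$ arising from its $\Qcal$-edge points \emph{into} $s$) and so cannot be centres of linking fans---and the re-rooting argument you offer for that reduction would itself need justification; and the claim that each vertex of $V(\Qcal)$ lies on at most one $W_i$ is not quite right, because the detour step of $\cev{\Qcal}$ can insert $\cev{\Qcal}(v)$ into one walk while $\cev{\Qcal}(v)$ already lies on another, though such sharing inside $V(\Qcal)$ is permitted by the paper's notion of disjointness.
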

\begin{proof}
Suppose that $(D_1, B_1)$ contains a subdivision of $F^\infty$ with centre $v_0$. Then an infinite subset $X$ of the out-neighbourhood of $v_0$ in $(D_1, B_1)$ is linkable. By \autoref{thm:linkableInShifted2linkableInDB0}(ii), $X$ is also linkable in $(D, B_0)$. As $X$ is a subset of the out-neighbourhood of $\cev{\Qcal}(v_0)$, a forbidden linking fan in $(D, B_0)$ results.
\end{proof}

\begin{pro}\label{thm:S_shiftedToExits}
Suppose $(D, B_0)$ is $C^O$-free and 
$\Qcal$ is a linkage from $S$ onto $T$ such that $S$ cannot be linked to a proper subset of $T$. 
Then $M_L(D_1, B_1) = M_L(D, B_0)$. 
\end{pro}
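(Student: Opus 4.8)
The plan is to deduce \autoref{thm:S_shiftedToExits} directly from the two preceding lemmas, \autoref{thm:linkableInShifted2linkableInDB0} and \autoref{thm:linkableInDB02linkableInShifted_noOC}, which together handle the two inclusions of independent sets. First I would observe that, by hypothesis, $\Qcal$ is a linkage from $S$ onto $T$ with $S$ not linkable to any proper subset of $T$, so the hypotheses of both lemmas are met once we also invoke $C^O$-freeness of $(D,B_0)$.

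For the inclusion $\Ical(M_L(D,B_0)) \subseteq \Ical(M_L(D_1,B_1))$, I would take an arbitrary set $I$ linkable in $(D,B_0)$ and apply \autoref{thm:linkableInDB02linkableInShifted_noOC}: its first assertion gives precisely that $I$ is linkable in $(D_1,B_1)$. Conversely, for $\Ical(M_L(D_1,B_1)) \subseteq \Ical(M_L(D,B_0))$, I would take a set $I$ linkable in $(D_1,B_1)$ and apply \autoref{thm:linkableInShifted2linkableInDB0}(ii), whose hypothesis ``$(D,B_0)$ is $C^O$-free'' is exactly what we have assumed; this yields that $I$ is linkable in $(D,B_0)$. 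Since $M_L(D_1,B_1)$ and $M_L(D,B_0)$ have the same ground set $V$ and the same independent sets, they are equal as matroids (or rather as linkability systems), which is the desired conclusion.

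Since this proposition is essentially a repackaging of the two lemmas, there is no real obstacle; the only point requiring care is checking that the hypotheses line up. In particular, \autoref{thm:linkableInDB02linkableInShifted_noOC} requires that there be no linkage from $S$ to a proper subset of $T$, which is stated verbatim in the proposition's hypothesis, and \autoref{thm:linkableInShifted2linkableInDB0}(ii) requires only $C^O$-freeness of $(D,B_0)$ and that $\Qcal$ link $S$ onto $T$. One should also note that both lemmas implicitly use that $\Qcal$ is a \emph{linkage} (so that its rays are in fact finite paths into $B_0$), which is part of the hypothesis here; this is what makes $\vec{\Qcal}$ and $\cev{\Qcal}$ behave as in \autoref{thm:AW-path_shift}. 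I would therefore write the proof in two short sentences, one per inclusion, each citing the relevant lemma.

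\begin{proof}
The ground sets of $M_L(D_1, B_1)$ and $M_L(D, B_0)$ both equal $V$, so it suffices to show that a set $I \subseteq V$ is linkable in $(D_1, B_1)$ if and only if it is linkable in $(D, B_0)$. If $I$ is linkable in $(D, B_0)$, then by \autoref{thm:linkableInDB02linkableInShifted_noOC} (whose hypotheses are exactly those assumed here) $I$ is linkable in $(D_1, B_1)$. Conversely, if $I$ is linkable in $(D_1, B_1)$, then since $(D, B_0)$ is $C^O$-free and $\Qcal$ is a linkage from $S$ onto $T$, \autoref{thm:linkableInShifted2linkableInDB0}(ii) shows that $I$ is linkable in $(D, B_0)$.
\end{proof}
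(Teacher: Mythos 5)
Your proof is correct and is essentially identical to the paper's own, which likewise combines \autoref{thm:linkableInShifted2linkableInDB0}(ii) and \autoref{thm:linkableInDB02linkableInShifted_noOC} to get the two inclusions. Your hypothesis-checking is accurate and nothing is missing.
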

\begin{proof}
By \autoref{thm:linkableInShifted2linkableInDB0}(ii) and \autoref{thm:linkableInDB02linkableInShifted_noOC}, a set $I \subseteq V$ is linkable in $(D, B_0)$ if and only if it is linkable in $(D_1, B_1)$.
\end{proof}

We remark that in order to show that $M_L(D, B_0) = M_L(D_1, B_1)$, the assumption in \autoref{thm:S_shiftedToExits} that $(D, B_0)$ is $C^O$-free can be slightly relaxed. 
Only outgoing combs constructed in the proofs of \autoref{thm:linkableInShifted2linkableInDB0}(ii) and \autoref{thm:linkableInDB02linkableInShifted_noOC} which have the form that all the spikes are terminal segments of paths in the linkage $\Qcal$ need to be forbidden.

\begin{thm}\label{thm:gammoidOCMinors}
The class of $C^O$-free gammoids is minor-closed.
\end{thm}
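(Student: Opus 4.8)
The plan is to reduce the statement to the case of a single contraction by an independent set, and then to the case treated by \autoref{thm:S_shiftedToExits} via an appropriate choice of the linkage $\Qcal$. Since matroid deletion and contraction commute and the class of gammoids is closed under deletion by definition, it suffices to show that a contraction minor $M.X = M/S$ of a $C^O$-free strict gammoid $M$ is again a $C^O$-free gammoid. Here we should be a little careful: a $C^O$-free gammoid is a restriction $M_L(D,B_0)|X$ of a strict gammoid admitting a $C^O$-free presentation, so after commuting deletions past the contraction we are left with contracting inside a strict gammoid that has a $C^O$-free presentation $(D,B_0)$. By \autoref{thm:wlogContractIndependent} we may assume the contracted set $S$ is independent in $M_L(D,B_0)$.

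Next I would fix a $C^O$-free presentation $(D,B_0)$ of the strict gammoid and an independent set $S$ to be contracted. Since $S$ is independent, it is linkable; choose a linkage $\Qcal$ from $S$ to $B_0$, and — crucially — choose it so that $T := \ter(\Qcal)$ is inclusion-minimal among all targets of linkages of $S$ (such a minimal linkage exists: start from any linkage and repeatedly apply \autoref{thm:AW2Linkage}, or rather its ``no alternating walk'' direction, to see a minimal target is reached; alternatively minimality of $T$ can be phrased as: there is no $\Qcal$-alternating walk from $\emptyset$ to $B_0\setminus T$, but one must argue such a minimal $T$ exists — in the $C^O$-free setting one can extract it by a standard argument). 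Now $\Qcal$ is a linkage from $S$ onto $T$ such that $S$ cannot be linked to a proper subset of $T$, so \autoref{thm:S_shiftedToExits} applies and yields $M_L(D_1,B_1) = M_L(D,B_0)$, where $(D_1,B_1)$ is the $\Qcal$-shifted dimaze. Moreover \autoref{thm:linkableInDB02linkableInShifted_noOC} tells us $(D_1,B_1)$ is again $C^O$-free, and by construction $B_1 = (B_0\setminus T)\cup S \supseteq S$.

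We have thus produced a $C^O$-free presentation $(D_1,B_1)$ of $M_L(D,B_0)$ in which the independent set $S$ to be contracted is a subset of the exits. Now \autoref{thm:contractSubsetOfB_0} gives that $M_L(D_1 - S, B_1\setminus S)$ is a presentation of $M_L(D_1,B_1)/S = M/S$; and deleting vertices from a dimaze clearly preserves $C^O$-freeness (any subdivided $C^O$ in a subdimaze would already be present in the larger dimaze), so $(D_1 - S, B_1\setminus S)$ is a $C^O$-free presentation of the strict gammoid $M/S$. Finally, re-attaching the deletions (the $D$-part we commuted out at the start) as a matroid restriction, and noting a restriction of a $C^O$-free strict gammoid is by definition a $C^O$-free gammoid, we conclude that the original minor is a $C^O$-free gammoid.

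The main obstacle I anticipate is the choice of the minimal linkage $\Qcal$: one must guarantee that among all linkages of the independent set $S$ there is one whose terminal set $T$ is inclusion-minimal, so that the hypothesis ``$S$ cannot be linked to a proper subset of $T$'' of \autoref{thm:S_shiftedToExits} is met. In finite dimazes this is immediate by finiteness, but here $S$ and $B_0$ may be infinite, so one needs either a Zorn-type argument on the poset of possible terminal sets, or an argument via \autoref{thm:AW2Linkage} showing that a linkage admitting no alternating walk to $B_0\setminus T$ (equivalently, one whose target cannot be shrunk) exists — and the existence of such a ``pushed-down'' linkage in the infinite, $C^O$-free setting is the delicate point. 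A secondary, more bookkeeping-type issue is the careful commuting of deletion past contraction at the level of presentations together with the fact that the $C^O$-free hypothesis is stated for \emph{some} presentation, not all, so one must keep track of which dimaze is being manipulated at each stage.
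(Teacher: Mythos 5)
Your overall strategy is the paper's: reduce to contracting an independent set $S$ via \autoref{thm:wlogContractIndependent}, shift along a linkage $\Qcal$ of $S$ whose terminal set $T$ cannot be shrunk, invoke \autoref{thm:linkableInDB02linkableInShifted_noOC} and \autoref{thm:S_shiftedToExits} to get an equivalent $C^O$-free presentation with $S\subseteq B_1$, and finish with \autoref{thm:contractSubsetOfB_0}. The bookkeeping about commuting deletion past contraction and about $C^O$-freeness being a property of \emph{some} presentation is handled correctly.

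The genuine gap is exactly the point you flag but do not close: the existence of a linkage $\Qcal$ from $S$ onto a set $T$ such that $S$ cannot be linked to a proper subset of $T$. Neither of your suggested remedies clearly works. A Zorn argument on terminal sets ordered by reverse inclusion would require a decreasing chain of terminal sets to have a lower bound that is again realized by a linkage of $S$, which is not evident. And \autoref{thm:AW2Linkage} is an augmentation tool: it enlarges $\Ini(\Qcal)$ and $\Ter(\Qcal)$ simultaneously, so iterating it does not shrink the terminal set of a linkage with fixed initial set $S$, and the condition ``$T$ cannot be shrunk'' is not the negation of ``there is a $\Qcal$-alternating walk from $X\setminus\Ini(\Qcal)$ to $B_0\setminus\Ter(\Qcal)$'' (with $X=S$ that hypothesis is vacuous). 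The paper sidesteps the issue entirely by using the matroid axiom (IM): extend $S$ to a base $B_1$ of $M_L(D,B_0)$ \emph{inside} $S\cup B_0$ and set $T:=B_0\setminus B_1$. Then $S$ is linkable onto $T$ (for instance because, by \autoref{thm:contractSubsetOfB_0}, $S$ is a base of $M/(B_0\cap B_1)=M_L(D-(B_0\cap B_1),T)$, and missing a vertex of $T$ would make $B_1$ non-maximal), and a linkage of $S$ to a proper subset $T'\subsetneq T$ would make $S\cup(B_0\setminus T')$ an independent proper superset of the base $B_1$, a contradiction. With that replacement for your ``minimal linkage'' step, the rest of your argument goes through and coincides with the paper's proof.
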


\begin{proof}
Let $N := M_L(D, B_0)$ be a strict gammoid. 
It suffices to show that any minor $M$ of $N$ is a gammoid. 
By \autoref{thm:wlogContractIndependent}, we have $M= N/S \- R$ for some independent set $S$ and coindependent set $R$. 
First extend $S$ in $B_0$ to a base $B_1$.
This gives us a linkage $\Qcal$ from $S$ onto $T := B_0 \setminus B_1$ such that there exists no linkage from $S$ to a proper subset of $T$. 

Assume that $(D, B_0)$ is $C^O$-free. Then by \autoref{thm:linkableInDB02linkableInShifted_noOC}, $(D_1, B_1)$ is $C^O$-free,  and by \autoref{thm:S_shiftedToExits}, $M_L(D, B_0) = M_L(D_1, B_1)$.  Since $S\subseteq B_1$, $M = M_L(D_1, B_1) / S \setminus R = M_L(D_1 - S, B_1 \setminus S)\setminus R$ is a $C^O$-free gammoid.
\end{proof}

A partial converse of \autoref{thm:linkableInShifted2linkableInDB0}(i) can be proved by analyzing the proof of the linkage theorem. 

\begin{lem}\label{thm:linkableInMinor2linkableInShifted_finite}
Let $M = M_L(D, B_0)$ be a strict gammoid, $\Qcal$ a linkage from $S$ onto $T$ such that $B_1$ is a base and $I \subseteq V \setminus S$ such that $S \cup I$ is linkable in $(D, B_0)$. If $I$ is finite, then it is linkable in $(D_1 - S, B_1 \setminus S)$.
\end{lem}

\begin{proof}
By \autoref{thm:AW-path_shift} it suffices to construct a set of disjoint finite $\Qcal$-alternating walks from $I$ to $B_0 \setminus T$.

Let $\Pcal$ be a linkage of $S \cup I$ in $(D, B_0)$. We apply the linkage theorem of Pym \cite{Pym69} to get a linkage $\Qcal^\infty$ from $S \cup I$ onto some set $Y^\infty \supseteq T$ in the following way (the notation used here was introduced in \cite{ALM}):

For each $x\in S \cup I$, let $P_x$ be the path in $\Pcal$ containing $x$ and $f^0_x := x$. Let $\Qcal^0 := \Qcal$.
For each $i> 0$ and each $x\in S \cup I$, let $f_x^i$ be the last vertex $v$ on $f_x^{i-1}P_x$ such that $(f_x^{i-1} P_x\mathring{v}) \cap V(\Qcal^{i-1}) = \emptyset$.
For $y\in T$, let $Q_y$ be the path in $\Qcal$ containing $y$ and $t_y^i$ be the first vertex $v\in Q_y$ such that the terminal segment $\mathring{v}Q_y$ does not contain any $f_x^i$. Define the linkage $\Qcal^i := \Bcal^i \cup \Ccal^i$ with

\vspace{-0.2in}
\begin{align*}
\Bcal^{i} &:= \{ P_xf_x^iQ_y: x\in S \cup I, y\in T \mbox{ and } f_x^i =t_y^i\} \text{, }\\
\Ccal^{i} &:= \{ P_x\in \Pcal: f_x^i\in B_0 \setminus T\}.
\end{align*}
There exist integers $i_x, i_y\geq 0$ such that $f_x^{i_x} = f_x^k$, $t_y^{i_y} = t_y^l$ for all integers $k\geq i_x$ and $l\geq i_y$. Define $f^\infty_x := f^{i_x}_x, t^\infty_y := t^{i_y}_y$ and 

\vspace{-0.2in}
\begin{align*}
\Bcal^{\infty} &:= \{ P_xf_x^\infty Q_y: x\in S \cup I, y\in T \mbox{ and } f_x^\infty =t_y^\infty\},\\
\Ccal^{\infty} &:= \{ P_x\in \Pcal: f_x^\infty\in B_0 \setminus T\}.
\end{align*}
Then $\Qcal^\infty:=
\Bcal^\infty \cup \Ccal^\infty$ is the linkage given by the linkage theorem. 

Let $Y := Y^\infty \setminus T$ and $B_2$ an extension to a base of the independent set $(B_0 \setminus Y^\infty) \cup (S \cup I)$ inside $B_1$. Then $B_2 \setminus B_1 = I$ and $B_1 \setminus B_2 \subseteq Y$ and so, by \cite[Lemma 3.7]{BDKPW}, $\left|I\right| = \left|B_2 \setminus B_1\right| = \left|B_1 \setminus B_2\right| \leq \left|Y\right|$.

\bigskip
 Let $v \in V(D)$ be a vertex with the property that $v = f_{x_{j+1}}^{j+1}$ for some integer $j$ and a vertex $x_{j+1} \in S \cup I$ such that $f_{x_{j+1}}^j \neq f_{x_{j+1}}^{j+1}$. We backward inductively construct a walk $W(v)$ that starts from $I$ and ends in $v$ as follows:

Given $x_{i+1}$ for a positive integer $i \leq j$, let $Q_i$ be the path in $\Qcal$ containing $f_{x_{i+1}}^{i}$ (if there is no such path, then $f_{x_{i+1}}^i \in I$ and $i = 0$). 
Since $f_{x_{i+1}}^i \neq f_{x_{i+1}}^{i+1}$, it follows that $\Fcal^i\cap \mathring f_{x_{i+1}}^{i} Q_i\neq \emptyset$, where $\Fcal^i := \{ f_x^i : x \in S \cup I\}$. 
Let $x_i$ be such that $f_{x_i}^i$ is the first vertex of $\Fcal^i$ on $\mathring f_{x_{i+1}}^{i} Q_i$. 
Moreover, since $f_{x_{i+1}}^i \in Q_i$, $\Fcal^{i-1} \cap \mathring f_{x_{i+1}}^{i} Q_i = \emptyset$, so $f_{x_i}^{i-1} \neq f_{x_i}^i$. Hence we can complete the construction down to $i=1$ and define:
\begin{align}
\label{eqn:unhappinessCoin}
 W(v) := f_{x_1}^0 P_1 f_{x_1}^1 \cup \bigcup_{0 < i < j} f_{x_{i+1}}^i Q_i f_{x_i}^{i}   \cup  f_{x_{i+1}}^i P_{i+1} f_{x_{i+1}}^{i+1} . 
 \end{align}

Note that $f_{x_1}^0 \neq f_{x_1}^1$ and for any $x\in S$, the definition of $f_x^1$ implies $f_x^0 = f_x^1$. Hence, $f_{x_1}^0$, the initial vertex of $W(v)$, is in $(S \cup I) \setminus S = I$.
Now we examine the interaction between two such walks:

\begin{clmnn}
Let $x, x' \in S \cup I$ be given such that $f_x^{j+1} \neq f_x^j$ and $f_{x'}^{j'+1} \neq f_{x'}^{j'}$.

\phantomsection\label{clm:distinctInitial}
(i) If $j = j'$ and $f_x^{j+1} \neq f_{x'}^{j'+1}$, then $\ini(W(f_x^{j+1})) \neq \ini(W(f_{x'}^{j'+1}))$.

\phantomsection\label{clm:sameInitial=subwalk}
(ii) If $W(f_x^{j+1})$ and $W(f_{x'}^{j'+1})$ start at the same vertex in $I$, then one is a subwalk of the other.
\end{clmnn}

\begin{proof}
For \hyperref[clm:distinctInitial]{(i)} we first note that $f_x^{j+1}$ and $f_{x'}^{j'+1}$ are on distinct paths in $\Pcal$ and apply induction on $j$. If $j = j' = 0$, then $\ini(W(f_x^{j+1})) = x \neq x' = \ini(W(f_{x'}^{j'+1}))$. For $j>0$ the walk $W(f_{x'}^{j'+1})$ has the form $W(f_{x'_j}^{j}) \cup f_{x'_{j+1}}^j Q'_j f_{x'_j}^{j}   \cup  f_{x'_{j+1}}^j P'_{j+1} f_{x'_{j+1}}^{j+1}$ and analogue $W(f_x^{j+1})$. The vertices $f_{x'_{j+1}}^j$ and $f_{x_{j+1}}^j$ are on distinct paths in $\Pcal$ and therefore distinct. Then it follows from the definition that $f_{x_j}^{j} \neq f_{x'_j}^{j}$ and we use the induction hypothesis to see that $\ini(W(f_{x'_j}^{j})) \neq \ini(W(f_{x_j}^{j}))$ and hence $\ini(W(f_{x'_{j+1}}^{j+1})) \neq \ini(W(f_{x_{j+1}}^{j+1}))$, as desired.

For \hyperref[clm:sameInitial=subwalk]{(ii)} suppose that $f_x^{j+1} \neq f_{x'}^{j'+1}$, then \hyperref[clm:distinctInitial]{(i)} implies $j \neq j'$, say $j < j'$. If $f_{x'_{j+1}}^{j+1} \neq f_{x_{j+1}}^{j+1}$, then ,by \hyperref[clm:distinctInitial]{(i)}, $\ini(W(f_x^{j+1})) \neq \ini(W(f_{x'_{j+1}}^{j+1})) = \ini(W(f_{x'}^{j'+1}))$. Hence $W(f_x^{j+1})$ is a subwalk of $W(f_{x'}^{j'+1})$.
\end{proof}

Each vertex $y \in Y \setminus I$ is on a non-trivial path in $\Qcal^\infty$, so there exists a least integer $i_y>0$ such that $y = f_{x_{i_y}}^{i_y}$ for some $x_{i_y} \in S \cup I$. For $y \in Y \cap I$ let $W(y)$ be the trivial walk at $y$, so that we can define $\Wcal := \{ W(y): y\in Y\}$.

Suppose $y$ and $y'$  are distinct vertices in $Y \setminus I$ such that $\ini(W(y)) = \ini(W(y'))$. Since there is no edge of $\Qcal$ ending in either of these vertices, \hyperref[clm:sameInitial=subwalk]{(ii)} implies that $W(y) = W(y')$ and therefore $y = y'$. Since the initial vertex of a non-trivial walk in $\Wcal$ is not in $B_0$, we have $\ini(W(y)) \neq \ini(W(y'))$ for any two distinct vertices $y, y'$ in $Y$. That means  $\ini(\Wcal) = I$, since $\left|I\right| \leq \left|Y\right|$.  

By \autoref{thm:maximalPQAWsDisjoint}, the maximal $\Qcal^\infty$-$\Qcal$-alternating walks starting in $I$ are disjoint. Thus, to complete the proof, it remains to check that each $\Qcal^\infty$-$\Qcal$-alternating walk starting in $I$ is finite. To that end, let $e$ be an edge of such a walk. As $E(\Wcal)$ is finite, it suffices to show that $e\in E(W)$ for some $W\in \Wcal$. 
By definition, $e \in E(\Qcal^\infty) \Delta E(\Qcal)$. 
The following case analysis completes the proof. 
\begin{enumerate}
\item $e \in E(\Qcal^\infty) \setminus E(\Qcal)$: $e$ is on some initial segment $P_x f_x^\infty$ of a path $P_x$ in $\Pcal$. More precisely, there is an integer $i$, such that $e \in f_x^i P_x f_x^{i+1}$. By construction $e \in W(f_x^{i+1})$ and $\ini(W(f_x^{i+1})) \in I$. Let $W$ be the walk in $\Wcal$ whose initial vertex is $\ini(W(f_x^{i+1}))$, then \hyperref[clm:sameInitial=subwalk]{(ii)} implies that $e$ is on $W$.
\item $e \in E(\Qcal) \setminus E(\Qcal^\infty)$: $e$ is on some initial segment $Q f_x^\infty$ of a path $Q$ in $\Qcal$. More precisely, there is an integer $i$ and $x, x' \in S \cup I$, such that $e \in f_x^i Q f_{x'}^i$. Since $f_x^i \neq f_x^{i+1}$, similar to the previous case, there is a walk in $\Wcal$ containing $e$. \qedhere
\end{enumerate}
\end{proof}

An immediate corollary of the following is that any forbidden minor, of which there are infinitely many (\cite{Ing77}), for the class of finite gammoids is also a forbidden minor for infinite gammoids. 

\begin{thm}\label{thm:gammoidFiniteMinors}
Any finite-rank minor of a gammoid is also a gammoid. 
\end{thm}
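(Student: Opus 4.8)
The plan is to reduce, via \autoref{thm:wlogContractIndependent}, to the following: if $N = M_L(D, B_0)$ is a strict gammoid and $M' = N / S \setminus R$ is a minor of finite rank $k$ with $S$ independent and $R$ coindependent, then $M'$ is a gammoid. (Every minor of a gammoid is a minor of a strict gammoid, since gammoids are restrictions of strict gammoids and restrictions are deletions.) I would then set up the shifting construction exactly as in the proof of \autoref{thm:gammoidOCMinors}: extend $S$ to a base $B_1$ of $N$ using vertices of $B_0$, so that there is a linkage $\Qcal$ from $S$ onto $T := B_0 \setminus B_1$; form the $\Qcal$-shifted dimaze $(D_1, B_1)$ and put $(D_2, B_2) := (D_1 - S, B_1 \setminus S)$, noting that $S \subseteq B_1$ consists of sinks of $D_1$.

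The finite-rank hypothesis enters through the observation that $(D_2, B_2)$ has only finitely many exits. Indeed, since $R$ is coindependent in $N$, the set $E' := V(D) \setminus (S \cup R)$ spans $N / S$, so $r(N / S) = r\big((N/S)|E'\big) = r(M') = k$; as $B_2 = B_1 \setminus S$ is independent in $N/S$, it follows that $|B_2| \le k$. Since a linkage in $(D_2, B_2)$ has at most $|B_2| \le k$ disjoint paths, $M_L(D_2, B_2)$ has rank at most $k$; hence for any independent $I$ and any $X \supseteq I$ the family $\{I' \in \Ical : I \subseteq I' \subseteq X\}$ is nonempty with all members of size at most $k$, so it has a member of maximum size, which is a maximal element, and (IM) holds. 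Since $M_L(D_2, B_2)$ always satisfies (I1)--(I3), it is a matroid, i.e.\ a strict gammoid.

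Next I would transfer independence between $(D, B_0)$ and $(D_2, B_2)$ on finite sets. For finite $J \subseteq V(D) \setminus S$, note first that $S \cup J$ is linkable in $(D_1, B_1)$ if and only if $J$ is linkable in $(D_2, B_2)$, because $S \subseteq B_1$ consists of sinks of $D_1$ (cf.\ the proof of \autoref{thm:contractSubsetOfB_0}). Applying \autoref{thm:linkableInShifted2linkableInDB0}(i) to $I := S \cup J$, whose difference from $S$ is the finite set $J$, and applying \autoref{thm:linkableInMinor2linkableInShifted_finite} in the converse direction, I obtain that $S \cup J$ is linkable in $(D, B_0)$ if and only if $J$ is linkable in $(D_2, B_2)$. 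Consequently, for finite $J \subseteq E'$, the four conditions $J \in \Ical(M')$, $J \in \Ical(N/S)$, ``$S \cup J$ linkable in $(D, B_0)$'', and ``$J$ linkable in $(D_2, B_2)$'' are all equivalent.

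Finally, since $M_L(D_2, B_2)$ has rank at most $k$, every subset of $E'$ linkable in $(D_2, B_2)$ is finite, hence independent in $M'$ by the equivalence above; conversely every independent set of $M'$ has at most $k$ elements, hence is finite, hence linkable in $(D_2, B_2)$. Therefore $\Ical(M') = \Ical\big(M_L(D_2, B_2)|E'\big)$, so $M' = M_L(D_2, B_2)|E'$ is a gammoid, being a matroid restriction of the strict gammoid $M_L(D_2, B_2)$. I expect the crux to be precisely the isolation of this mechanism: finiteness of $r(M')$ forces the shifted dimaze $(D_2, B_2)$ to have finite rank, which simultaneously makes $M_L(D_2, B_2)$ an honest matroid and upgrades the agreement of $M'$ and $M_L(D_2, B_2)|E'$ on finite sets to an agreement everywhere. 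The remaining ingredients --- the existence of the shifting data $(\Qcal, B_1)$, and \autoref{thm:linkableInMinor2linkableInShifted_finite} (which already packages the required use of Pym's linkage theorem) --- are furnished by the preceding development.
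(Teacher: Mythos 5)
Your proof is correct and follows essentially the same route as the paper's: the same reduction via \autoref{thm:wlogContractIndependent}, the same shifting setup from the proof of \autoref{thm:gammoidOCMinors}, with \autoref{thm:linkableInShifted2linkableInDB0}(i) and \autoref{thm:linkableInMinor2linkableInShifted_finite} transferring linkability in the two directions and the finite-rank bound closing the argument. The only (harmless) difference is that you verify (IM) for $M_L(D_1 - S, B_1 \setminus S)$ directly from the rank bound, whereas the paper obtains it by identifying that set system with the matroid $N/S$.
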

\begin{proof}
The setting follows the first paragraph of the proof of \autoref{thm:gammoidOCMinors}. 
Suppose that $M$ has finite rank $r$. 
Since $R$ is coindependent, $V\- R$ is spanning in $N$. Therefore, $N / S$ also has rank $r$. 
Let $I\in M_L(D_1-S, B_1\- S)$, then $r = |B_0 \setminus T| = |B_1\setminus S| \geq |I|$ and, by \autoref{thm:linkableInShifted2linkableInDB0}(i), $I$ is in $\Ical(N / S)$. 
Conversely, if $I\in \Ical(N / S)$, then $I$ is finite. By \autoref{thm:linkableInMinor2linkableInShifted_finite}, $I$ is linkable in $(D_1 - S, B_1\- S)$. 
Hence $M_L(D_1 - S, B_1 \setminus S)$ is a strict gammoid presentation of $N / S$ and $M = M_L(D_1 - S, B_1 \setminus S) \setminus R$ is a gammoid.
\end{proof}

\subsection{Topological gammoids}
\label{sec:TopologicalGammoids}
A topological notion of linkability is introduced in \cite{Car14+}. Roughly speaking, a topological path from a vertex $v$ does not need to reach the exits as long as no finite vertex set avoiding that path can prevent an actual connection of $v$ to $B_0$.

Here we show that in fact, topological gammoids coincide with the finitary gammoids.  As a corollary, we see that topological gammoids are minor-closed.
The difference between a topological linkage and a linkage is that paths ending in the centre of a linking fan and spines of outgoing combs are allowed. Thus, to prove the following, it suffices to give a $\{C^O, F^\infty\}$-free dimaze presentation for the strict topological gammoid.

\begin{lem}\label{thm:topologicalGammoidIsGammoid}
Every strict topological gammoid is a strict gammoid.
\end{lem}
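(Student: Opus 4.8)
The plan is to build, from the given dimaze $(D,B_0)$, a new dimaze $(D',B_0')$ that is $\{C^O,F^\infty\}$-free and presents the same matroid; since a strict gammoid is by definition a matroid of the form $M_L(\cdot,\cdot)$, this proves the lemma.

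First I record why $\{C^O,F^\infty\}$-freeness is the right target and reduces the problem to finite sets. By \autoref{thm:finitarizationOfGammoid}, $M_{TL}(D,B_0)=M_L(D,B_0)^{\mathrm{fin}}$, so it is finitary and is determined by its finite members, which are precisely the finite linkable subsets of $(D,B_0)$ (a finite topologically linkable set is linkable). In \emph{any} $\{C^O,F^\infty\}$-free dimaze every topological path is an ordinary path to the exits: the case ``$P_x$ is the spine of an outgoing comb contained in the dimaze'' is vacuous, and so is ``$P_x$ ends in the centre of a linking fan'', there being no linking fan. Hence for such a dimaze $(D',B_0')$ a topological linkage is just a linkage, so $M_{TL}(D',B_0')=M_L(D',B_0')$, and then by \autoref{thm:finitarizationOfGammoid} once more $M_L(D',B_0')=M_L(D',B_0')^{\mathrm{fin}}$ is finitary. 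Therefore it suffices to produce a $\{C^O,F^\infty\}$-free $(D',B_0')$ with exactly the same finite linkable sets as $(D,B_0)$: then $M_L(D',B_0')=M_L(D',B_0')^{\mathrm{fin}}=M_L(D,B_0)^{\mathrm{fin}}=M_{TL}(D,B_0)$.

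For the construction itself, the guiding intuition is that, as far as the finitarisation is concerned, the centre of a linking fan and the ``end'' along which the spine of an outgoing comb escapes behave like additional exits: a topological path may terminate there, and in a finite linkage at most one path can exploit such a gateway, exactly as for a genuine exit. Concretely I would, working with a maximal (or transfinitely enumerated) family of combs and fans chosen so that overlapping ones are treated coherently: (a) for each linking fan, delete all out-edges of its centre $v$ and place $v$ in the exit set; and (b) for each outgoing comb, create a new target for ``running to infinity along the spine'' and dismantle enough spine--spike incidences that no outgoing comb remains, taking care that this introduces no \emph{shortcut} which would make some finite set that is not topologically linkable in $D$ become linkable in $D'$. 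One then verifies three things: that $(D',B_0')$ is $\{C^O,F^\infty\}$-free (a surviving fan would force its centre to retain infinitely many disjoint routes to exits, impossible after (a); a surviving comb contradicts the dismantling in (b)); that every finite topologically linkable set of $D$ is linkable in $D'$ (truncate each path of a topological linkage at the first fan centre it meets, and replace each spine-following path by its redirection to the corresponding new target, keeping the family disjoint and inside $B_0'$); and that every finite set linkable in $D'$ is topologically linkable in $D$ (pull the linkage back along the construction: a path ending at a converted fan centre is already a topological path of $D$, and a path ending at a new target unwinds to a spine-following topological path of $D$).

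The step I expect to be the main obstacle is (b). A linking fan offers a genuine vertex --- its centre --- to promote to an exit, but an outgoing comb has no finite ``endpoint'': appending an edge from the spine's origin to a new exit creates a shortcut and wrongly enlarges the finite independent sets, while cutting the spine too aggressively destroys genuine finite linkable sets. Getting this surgery right, making it compatible with combs and fans that share vertices (whence a simultaneous or recursive construction), and certifying afterwards that the result is again $\{C^O,F^\infty\}$-free, is the technical heart of the argument; an alternative, which avoids the explicit surgery but seems less transparent, would be to show directly by a structural analysis of $M_{TL}(D,B_0)$ that the finitarisation of a strict gammoid is again a strict gammoid.
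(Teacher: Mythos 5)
Your overall framework is sound and your treatment of linking fans coincides with the paper's: the paper also deletes all out-edges of fan centres and adjoins those centres to the exit set, then checks (via \autoref{thm:OR2OC}) that this does not change $M_{TL}$. But your step (b) --- the surgery on outgoing combs --- is exactly the point you yourself flag as unresolved, and it is a genuine gap, not a routine verification: as you observe, there is no finite vertex one can promote to an exit, and naive edge insertions or spine truncations change the finite linkable sets. The paper closes this gap with the $\Qcal$-shifting machinery it has already built (Lemmas \ref{thm:AW-path_shift}--\ref{thm:maximalPQAWsDisjoint}). Concretely: after the fan-centre step one takes a \emph{base} $S\cup B_0$ of $M_{TL}(D,B_0)$ and a family $\Qcal$ of disjoint spines of outgoing combs starting at $S$, and forms the $\Qcal$-shifted dimaze $(D_1,B_1)$ with $B_1=B_0\cup S$ (here $T=\ter(\Qcal)=\emptyset$ since the elements of $\Qcal$ are rays). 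Reversing the spine edges turns ``escaping to infinity along a spine'' into ``arriving at the spine's origin'', which is now an exit; no new edge and hence no shortcut is introduced. The two directions of the equivalence ``$I$ topologically linkable in $(D,B_0)$ iff $I$ linkable in $(D_1,B_1)$'' are then exactly the correspondence between disjoint $\Qcal$-alternating walks and disjoint paths (\autoref{thm:AW-path_shift}, \autoref{thm:aws2paths}, \autoref{thm:ORinSymmetricDifference2OC}), and the crucial use of maximality is that an \emph{infinite} $\Pcal$-$\Qcal$-alternating walk would yield a topologically linkable proper superset of the base $S\cup B_0$, a contradiction; this is what rules out the ``too aggressive cutting'' failure mode you worry about.

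A secondary difference: the paper does not need to certify that the resulting dimaze is $\{C^O,F^\infty\}$-free inside this lemma; it proves directly that $M_{TL}(D,B_0)=M_L(D_1,B_1)$, and the freeness statement is obtained afterwards, for \emph{every} presentation of a finitary strict gammoid, by a separate circuit argument in \autoref{thm:characterizationFinitaryStrictGammoids} (a vertex linkable to infinitely many exits yields an infinite fundamental circuit). So if you adopt the shifting construction you can also drop your freeness verification and the detour through finitarisations, and conclude the lemma in one step.
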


\begin{proof}
Let $(D', B_0')$ be a dimaze and $F$ be the set of all vertices that are the centre of a subdivision of $F^\infty$. Let $(D, B_0)$ be obtained from $(D', B_0')$ by deleting all edges whose initial vertex is in $F$ from $D'$ and $B_0 := B_0' \cup F$.

We claim that $M_{TL}(D, B_0) = M_{TL}(D', B_0')$.  
Let $\Pcal$ be a topological linkage of $I$ in $(D', B_0')$. 
Then the collection of the initial segments of each element of $\Pcal$ up to the first appearance of a vertex in $F$ forms a topological linkage of $I$ in $(D, B_0)$. 
Conversely, let $\Pcal$ be a topological linkage of $I$ in $(D, B_0)$. Note that any linkage in $(D, B_0)$ is a topological linkage in $(D', B_0')$. In particular the spikes of an outgoing comb whose spine $R$ is in $\Pcal$ form a topological linkage. Hence, $R$ is also the spine of an outgoing comb in $(D', B_0')$ by \autoref{thm:OR2OC}. So $I$ is topologically linkable in $(D, B_0)$.

Let $S \cup B_0$ be a base of $M_{TL}(D, B_0)$ and $\Qcal$ a set of disjoint spines of outgoing combs starting from $S$. 
We show that a set $I$ is topologically linkable in $(D, B_0)$ if and only if it is linkable in the $\Qcal$-shifted dimaze $(D_1, B_1)$.

Let $\Pcal$ be a topological linkage of $I$ in $(D, B_0)$. 
By \autoref{thm:maximalPQAWsDisjoint}, the set $\Wcal$ of maximal $\Pcal$-$\Qcal$-alternating walks starting in $I$ is a set of disjoint $\Qcal$-alternating walks possibly ending in $\ter(\Pcal) \cup S \subseteq B_1$. 
If there were an infinite walk, then it would have to start outside $S$ and give rise to a topologically linkable superset of $S\cup B_0$, by \autoref{thm:aws2paths} and \autoref{thm:OR2OC}. So each walk in $\Wcal$ is finite. By \autoref{thm:AW-path_shift}, $I$ is linkable in $(D_1, B_1)$.

Conversely let $I$ be linkable in $(D_1, B_1)$ and $\Wcal$ a set of disjoint finite $\Qcal$-alternating walks in $(D, B_0)$ from $I$ to $B_1$ provided by \autoref{thm:AW-path_shift}. By \autoref{thm:aws2paths}, $\Qcal \Delta \Wcal$ contains a set $\Rcal$ of disjoint paths or rays in $(D, B_0)$ from $I$ to $B_0$.
By \autoref{thm:ORinSymmetricDifference2OC}, any ray in $\Rcal$ is in fact the spine of an outgoing comb, so $I$ is topologically linkable in $(D, B_0)$.
\end{proof}

Now we can characterize strict topological gammoids among strict gammoids.
\begin{thm}\label{thm:characterizationFinitaryStrictGammoids}
The following are equivalent:
\begin{enumerate}
\item $M$ is a strict topological gammoid;
\item $M$ is a finitary strict gammoid;
\item $M$ is a strict gammoid such that any presentation is $\{C^O, F^\infty\}$-free; 
\item $M$ is a $\{ C^O, F^\infty \}$-free strict gammoid. 
\end{enumerate}
\end{thm}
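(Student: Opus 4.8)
The plan is to prove the four statements equivalent by means of the cycle $(1)\Rightarrow(2)\Rightarrow(3)\Rightarrow(4)\Rightarrow(1)$, with essentially all of the work concentrated in $(2)\Rightarrow(3)$. The other three arcs are short. For $(1)\Rightarrow(2)$: writing $M=M_{TL}(D,B_0)$, the matroid $M$ is a strict gammoid by \autoref{thm:topologicalGammoidIsGammoid} and is finitary by \autoref{thm:finitarizationOfGammoid}. For $(3)\Rightarrow(4)$: a strict gammoid admits at least one presentation, which by $(3)$ is $\{C^O,F^\infty\}$-free. For $(4)\Rightarrow(1)$: fix a $\{C^O,F^\infty\}$-free presentation $(D,B_0)$ of $M$; since $(D,B_0)$ contains neither an outgoing comb nor a linking fan, the only topological paths available in it are ordinary paths ending in $B_0$, so a topological linkage is the same thing as a linkage and $M=M_L(D,B_0)=M_{TL}(D,B_0)$ is a strict topological gammoid. (The same observation shows $M_L(D,B_0)=M_L(D,B_0)^{\mathrm{fin}}$, so $(4)$ also implies $(2)$ directly.)

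The substance of the proof is $(2)\Rightarrow(3)$. Assume $M=M_L(D,B_0)$ is finitary; we must show that an arbitrary presentation $(D,B_0)$ is $\{C^O,F^\infty\}$-free. If it were not, then $(D,B_0)$ would contain a subdivision of $C^O$ or of $F^\infty$. Either way there is a vertex $v$ — the initial vertex of the spine, respectively the centre — with $v\notin B_0$ (it has an outgoing edge, while exits are sinks), together with infinitely many pairwise distinct exits $b_1,b_2,\dots$ each reachable from $v$ in $D$: along the spine and down a spike in the first case, along a fan path in the second. Put $B^*:=\{b\in B_0: \text{there is a path from }v\text{ to }b\text{ in }D\}$, an infinite set with $\{b_j:j\geq 1\}\subseteq B^*$ and $v\notin B^*$. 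Then $\{v\}\cup B^*$ is dependent in $M_L(D,B_0)$: in any linkage, each $b\in B^*$ must be routed by the trivial path at $b$, since it is a sink, so $v$ would have to be linked to an exit outside $B^*$, which is impossible by the definition of $B^*$. However, every finite subset $\{v\}\cup F$ with $F\subseteq B^*$ is linkable: route each $b\in F$ by its trivial path, and route $v$ along any path from $v$ to a vertex $b^*\in B^*\setminus F$; the interior vertices of such a path have outgoing edges, hence are not exits, so the path meets $B_0$ only in $b^*\notin F$ and is disjoint from the trivial paths. Therefore $\{v\}\cup B^*$ lies in $M_L(D,B_0)^{\mathrm{fin}}$ but not in $M_L(D,B_0)$, contradicting the finitariness of $M$.

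The main obstacle is choosing the right infinite circuit in $(2)\Rightarrow(3)$. Blocking only the comb's (or fan's) own exits $\{b_j:j\geq 1\}$ need not produce a contradiction, since in the ambient dimaze $v$ may still reach further exits; the remedy is to block at once the entire set $B^*$ of exits reachable from $v$. That this keeps every finite subset linkable is precisely where one uses that exits are sinks and so never appear as interior vertices of the escape paths. Everything else is a matter of unwinding the definitions of linkage, topological linkage and finitarisation, in combination with \autoref{thm:topologicalGammoidIsGammoid} and \autoref{thm:finitarizationOfGammoid}.
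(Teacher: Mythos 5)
Your proposal is correct and takes essentially the same route as the paper: its $(2)\Rightarrow(3)$ likewise considers the set of all exits to which the offending vertex $v$ is linkable (calling $\{v\}\cup B^*$ the fundamental circuit of $v$ with respect to $B_0$) and notes that a $C^O$ or $F^\infty$ makes this set infinite, contradicting finitariness; you simply verify the dependence and finite-subset-independence by hand rather than quoting the fundamental-circuit fact. The other three implications are handled identically.
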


\begin{proof}
$1. \Rightarrow 2.$ :
By \autoref{thm:finitarizationOfGammoid}, $M$ is a finitary matroid and by \autoref{thm:topologicalGammoidIsGammoid} it is a strict gammoid.

$2. \Rightarrow 3.$ :
Let $M_L(D, B_0)$ be any presentation of $M$.
Note that the union of any vertex $v \in V \setminus B_0$ and all the vertices in $B_0$ to which $v$ is linkable forms a circuit in $M$ (the fundamental circuit of $v$ and $B_0$). Suppose $(D, B_0)$ is not $\{ C^O, F^\infty \}$-free, then there is a vertex linkable to infinitely many vertices in $B_0$. But then $M$ contains an infinite circuit and is not finitary.

$3. \Rightarrow 4.$ : Trivial. 

$4. \Rightarrow 1.$ :
Take a $\{C^O, F^\infty\}$-free presentation of $M$. Then topological linkages coincide with linkages. Hence $M$ is a topological gammoid.
\end{proof}

Next we also characterize topological gammoids among gammoids. 

\begin{cor}\label{thm:characterizationFinitaryGammoids}
The following are equivalent:
\begin{enumerate}
\item $M$ is a topological gammoid;
\item $M$ is a finitary  gammoid;
\item $M$ is a $\{ C^O, F^\infty \}$-free gammoid.
\end{enumerate}
\end{cor}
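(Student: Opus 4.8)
The plan is to reduce this to its strict-gammoid counterpart, \autoref{thm:characterizationFinitaryStrictGammoids}, by passing from strict gammoids to their restrictions; I would establish the cycle $1 \Rightarrow 2 \Rightarrow 3 \Rightarrow 1$.

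For $1 \Rightarrow 2$, I would write a topological gammoid as $M = M_{TL}(D, B_0)|X$. Since $M_{TL}(D, B_0)$ is finitary by \autoref{thm:finitarizationOfGammoid} and is a strict gammoid by \autoref{thm:topologicalGammoidIsGammoid}, its restriction $M$ is a finitary gammoid. For $3 \Rightarrow 1$, I would take a $\{C^O, F^\infty\}$-free presentation $(D, B_0)$ of $M$, so $M = M_L(D, B_0)|X$; in a dimaze with no outgoing comb and no linking fan the only topological paths are ordinary paths to the exits, hence topological linkages coincide with linkages and $M_L(D, B_0) = M_{TL}(D, B_0)$ as set systems. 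The latter is a strict topological gammoid, so $M$, being one of its restrictions, is a topological gammoid.

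The substance lies in $2 \Rightarrow 3$, where from a finitary gammoid I need not merely finitarity but an honest $\{C^O, F^\infty\}$-free presentation. Writing $M = N|X$ with $N := M_L(D, B_0)$ a strict gammoid, I would first observe that finitarization commutes with restriction: a subset of $X$ lies in $(N|X)^{\rm fin}$ precisely when all its finite subsets are linkable in $(D, B_0)$, i.e.\ precisely when it lies in $N^{\rm fin}$. Using $M = M^{\rm fin}$ and $N^{\rm fin} = M_{TL}(D, B_0)$ from \autoref{thm:finitarizationOfGammoid}, this gives $M = M_{TL}(D, B_0)|X$, exhibiting $M$ as a restriction of the strict topological gammoid $M_{TL}(D, B_0)$. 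By \autoref{thm:characterizationFinitaryStrictGammoids} the latter admits a $\{C^O, F^\infty\}$-free presentation $(D_1, B_1)$; then $M = M_L(D_1, B_1)|X$ shows $M$ is a $\{C^O, F^\infty\}$-free gammoid.

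The one point I expect to need care is exactly this detour in $2 \Rightarrow 3$: since a presentation of a gammoid $M$ is a dimaze $(D, B_0)$ with $M = M_L(D, B_0)|X$, and $M_L(D, B_0)$ itself need not a priori be a matroid, producing the required $\{C^O, F^\infty\}$-free presentation by manipulating $(D, B_0)$ directly would be awkward; instead, routing through $M_{TL}(D, B_0)$ and invoking the already-proved strict-case characterization supplies a well-behaved presentation cleanly.
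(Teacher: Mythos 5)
Your proposal is correct and follows essentially the same route as the paper: both reduce the corollary to \autoref{thm:characterizationFinitaryStrictGammoids} together with \autoref{thm:finitarizationOfGammoid}, merely cycling through the implications in a different order. The only substantive variation is in passing from a finitary gammoid to a restriction of $M_{TL}(D,B_0)$, where you use that finitarisation commutes with restriction while the paper compares circuit sets; these amount to the same observation.
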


\begin{proof}
$1. \Rightarrow 3.$ :
There exist a dimaze $(D, B_0)$ and $X \subseteq V$ such that $M = M_{TL}(D, B_0) \setminus X$. 
By \autoref{thm:characterizationFinitaryStrictGammoids}, there is a $\{C^O, F^\infty \}$-free dimaze $(D_1, B_1)$ such that $M_L(D_1, B_1) = M_{TL}(D, B_0)$. Hence, $M$ is a $\{C^O, F^\infty \}$-free gammoid.

$3. \Rightarrow 2.$ :
There exists a $\{ C^O, F^\infty \}$-free presentation of a strict gammoid $N$ of which $M$ is a restriction. By \autoref{thm:characterizationFinitaryStrictGammoids}, $N$ is finitary, thus, so is $M$.

$2. \Rightarrow 1.$ : There exist $(D, B_0)$ and $X\subseteq V$ such that $M = M_L(D, B_0) \- X$. 
Since $M\- X$ is finitary, $\Ccal( M\- X) = \Ccal( M^{\rm fin}\- X)$. By \autoref{thm:finitarizationOfGammoid}, the latter is equal to $\Ccal(M_{TL}(D, B_0)\- X)$. Hence, $M$ is a topological gammoid. 
\end{proof}

\begin{thm}
\label{thm:Top-minor-closed}
The class of finitary gammoids (or equivalently topological gammoids) is closed under taking minors.
\end{thm}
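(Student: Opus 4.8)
The plan is to rerun the proof of \autoref{thm:gammoidOCMinors} while additionally keeping track of $F^\infty$-freeness, which is exactly the point of \autoref{thm:noFanInShifted}.

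By \autoref{thm:characterizationFinitaryGammoids}, a finitary gammoid is the same thing as a $\{C^O, F^\infty\}$-free gammoid, so it suffices to show that the class of $\{C^O, F^\infty\}$-free gammoids is minor-closed. Let $M$ be such a gammoid, and write $M = N \setminus X$ with $N := M_L(D, B_0)$ for some $\{C^O, F^\infty\}$-free dimaze $(D, B_0)$ and some $X \subseteq V$. Since matroid deletion and contraction commute, every minor of $M$ is again a minor of $N$, so it is enough to prove that every minor of $N$ is a $\{C^O, F^\infty\}$-free gammoid.

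Following the first paragraph of the proof of \autoref{thm:gammoidOCMinors}, such a minor has the form $N / S \setminus R$ with $S$ independent and $R$ coindependent (by \autoref{thm:wlogContractIndependent}), and extending $S$ inside $B_0$ to a base $B_1$ yields a linkage $\Qcal$ from $S$ onto $T := B_0 \setminus B_1$ such that $S$ cannot be linked to a proper subset of $T$. Since $(D, B_0)$ is $C^O$-free, \autoref{thm:linkableInDB02linkableInShifted_noOC} shows the $\Qcal$-shifted dimaze $(D_1, B_1)$ is $C^O$-free, and \autoref{thm:S_shiftedToExits} gives $M_L(D_1, B_1) = N$. The extra ingredient needed here is \autoref{thm:noFanInShifted}: because $(D, B_0)$ is moreover $F^\infty$-free, $(D_1, B_1)$ is $F^\infty$-free as well.

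Finally, since $S \subseteq B_1$, \autoref{thm:contractSubsetOfB_0} gives $N / S = M_L(D_1, B_1) / S = M_L(D_1 - S, B_1 \setminus S)$, and $(D_1 - S, B_1 \setminus S)$ is a subdimaze of $(D_1, B_1)$, hence still $\{C^O, F^\infty\}$-free straight from the definition of $\Hcal$-freeness. Thus $N / S \setminus R = M_L(D_1 - S, B_1 \setminus S) \setminus R$ is a $\{C^O, F^\infty\}$-free gammoid, which by \autoref{thm:characterizationFinitaryGammoids} is a finitary (equivalently, topological) gammoid. I expect no real obstacle: all the substantive content has been front-loaded into \autoref{thm:gammoidOCMinors} and \autoref{thm:noFanInShifted}, and what remains is only the two routine observations that a minor of $M$ is a minor of the strict gammoid $N$ and that $\Hcal$-freeness is inherited by subdimazes.
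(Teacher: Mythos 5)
Your proposal is correct and follows essentially the same route as the paper: both reduce via \autoref{thm:characterizationFinitaryGammoids} to showing that $\{C^O, F^\infty\}$-free gammoids are minor-closed, then rerun the shifting argument of \autoref{thm:gammoidOCMinors} with \autoref{thm:noFanInShifted} supplying the preservation of $F^\infty$-freeness. The paper merely states this more tersely; your unpacking of the intermediate steps is accurate.
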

\begin{proof}
Let $M$ be a finitary gammoid. By \autoref{thm:characterizationFinitaryGammoids}, $M$ is a $\{ C^O, F^\infty\}$-free gammoid. Any minor of $M$ is a $C^O$-free gammoid by \autoref{thm:gammoidOCMinors}, and also $F^\infty$-free by \autoref{thm:noFanInShifted}. 
So any minor of $M$ is a finitary gammoid by \autoref{thm:characterizationFinitaryGammoids}.
\end{proof}

\section{Duality} \label{sec:duality}
While a finite strict gammoid is dual to a transversal matroid \cite{IP73}, an infinite strict gammoid need not be. So with the aim of understanding the dual of strict gammoids, we first introduce a natural extension of transversal matroids. This provides a description of the dual of $C^A$-free strict gammoids introduced in \cite{ALM}. However, it turns out that this extension does not contain the dual of all strict gammoids. 

Another result proved in \cite{IP73} states that the dual of a finite gammoid is a gammoid. 
This can be proved by observing that the dual of any finite strict gammoid is a transversal matroid and that finite gammoids are closed under contraction minors. 
The proof remains valid for those infinite gammoids that admit a presentation $(D,B_0)$ such that the underlying graph of $D$ is rayless. In general, the proof breaks down. For example, we shall see that $C^I$ defines a strict gammoid whose dual is not a transversal matroid.  
While this dual is a gammoid, a more badly behaved example exists:~there is a strict gammoid which is not dual to any gammoid. 

\subsection{Strict gammoids and path-transversal matroids} \label{sec:gammoidsTransversalDuality}
The class of path-transversal matroids is introduced as a superclass of transversal matroids, and proved to contain the dual matroids of any $C^A$-free strict gammoid. We shall see that an extra condition forces $C^A$-free strict gammoids to be dual to transversal matroids. 
On the other hand, even though path-transversal matroids extend transversal matroids, they do not capture the dual of all strict gammoids, as we shall see in \autoref{thm:infinitetree-legal}.

\medskip
Let us introduce a dual object of a dimaze.
Given a bipartite graph $G=(V,W)$, we call a matching $m_0$ onto $W$ an \emph{identity matching}, and the pair $(G,m_0)$ a \emph{bimaze}\footnote{Short for \emph{bi}partite \emph{maze}.}. 
We adjust two constructions of \cite{IP73} for our purposes. 

\begin{defi}\label{def:dualityConstruction}
Given a dimaze $(D,B_0)$, define a bipartite graph $D^\star_{B_0}$, with bipartition $(V, (V\- B_0)^\star)$, where $(V\- B_0)^\star: = \{v^\star : v\in V\- B_0\}$ is disjoint from $V$; and $E(D^\star_{B_0}):=m_0 \cup \{vu^\star : (u,v) \in E(D)\}$, where $m_0:=\{vv^\star : v\in V\setminus B_0 \}$. Call $(D,B_0)^\star:=(D^\star_{B_0},m_0)$ the 
\emph{converted bimaze} of $(D, B_0)$.
\end{defi}

Starting from a dimaze $(D, B_0)$, we write $(V\setminus B_0)^\star$, $m_0$ and $v^\star$ for the corresponding objects in \autoref{def:dualityConstruction}.

\begin{defi}\label{def:conDimaze}
Given a bimaze $(G,m_0)$, where $G=(V, W)$, define a digraph $G^\star_{m_0}$ such that $V(G^\star_{m_0}) := V$ and $E(G^\star_{m_0}) := \{(v,w) : wv^\star  \in E(G) \setminus m_0 \}$, where $v^\star$ is the vertex in $W$ that is matched by $m_0$ to $v \in V$. Let $B_0 : = V\- V(m_0)$. Call $(G,m_0)^\star:=(G^\star_{m_0},B_0)$ the \emph{converted dimaze} of $(G, m_0)$. 
\end{defi}

Starting from a bimaze $(G, m_0)$, we write $B_0$ and $v^\star$ for the corresponding objects in \autoref{def:conDimaze} and $(V\setminus B_0)^\star$ for the right vertex class of $G$.

\begin{figure}[htb]
 \centering
 \includegraphics[scale=0.8]{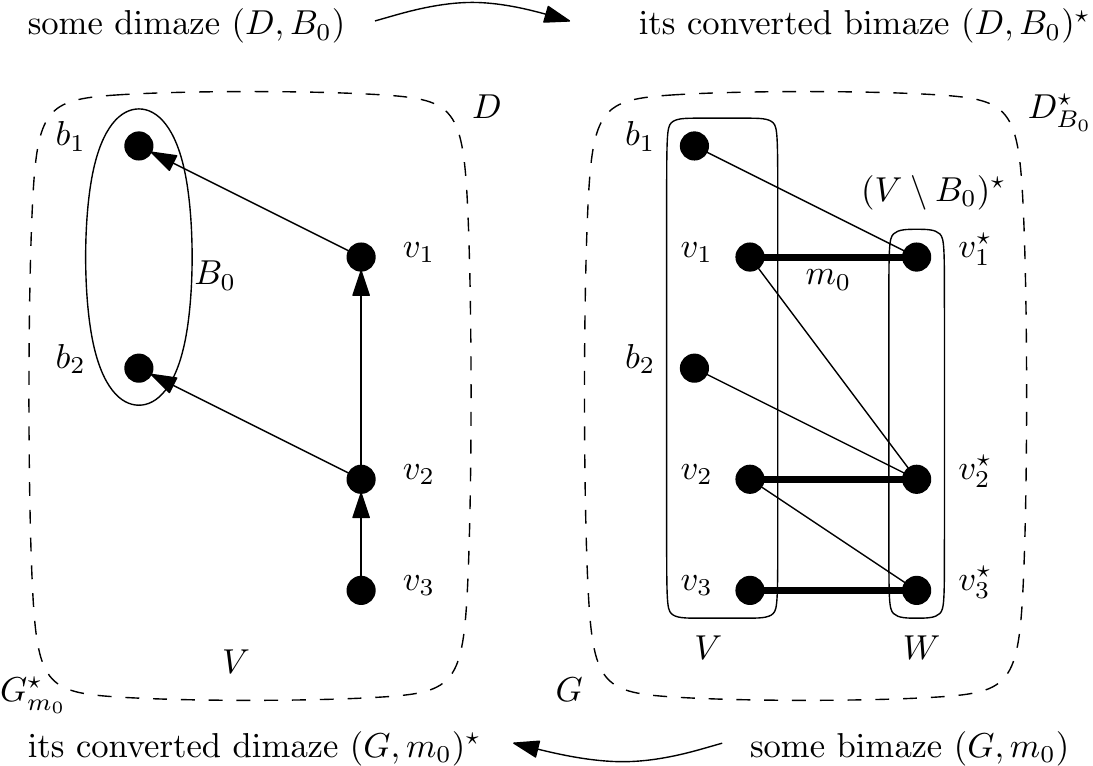}
 \caption{Converting a dimaze to a bimaze and vice versa}
 \label{fig:constructions}
\end{figure} 

Note that these constructions are inverse to each other (see \autoref{fig:constructions}). In particular,   
let $(G,m_0)$ be a bimaze, then 
\begin{equation}
\label{E:intdergraph}
(G,m_0)^{\star\star}=(G,m_0).
\end{equation}

Note that for any matching $m$, each component of $G[m_0\cup m]$ is either a path, a ray or a double ray. 
If $G[m_0\cup m]$ consists of only finite components, then $m$ is called a \emph{$m_0$-matching}. A set $I\subseteq V$ is \emph{$m_0$-matchable} if there is an $m_0$-matching of $I$. 

\begin{defi}
\label{def:path-transversal} 
Given a bimaze $(G,m_0)$, the pair of $V$ and the set of all $m_0$-matchable subsets of $V$ is denoted by $M_{PT}(G, m_0)$. If $M_{PT}(G,m_0)$ is a matroid, it is called a \emph{path-transversal matroid}.
\end{defi}

The correspondence between finite paths and $m_0$-matchings is depicted in the following lemma. 

\begin{lem}\label{thm:legalLinkageLemma}
Let $(D,B_0)$ be a dimaze. Then $B$ is linkable onto $B_0$ in $(D,B_0)$ iff $V\setminus B$ is $m_0$-matchable onto $(V\setminus B_0)^\star$ in $(D,B_0)^\star$.
\end{lem}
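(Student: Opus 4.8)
The plan is to establish the two directions of the biconditional by translating between linkages onto $B_0$ in $(D, B_0)$ and $m_0$-matchings onto $(V\setminus B_0)^\star$ in $(D,B_0)^\star$, using the edge correspondence of \autoref{def:dualityConstruction}: a directed edge $(u,v)\in E(D)$ corresponds to the graph edge $vu^\star$ of $D^\star_{B_0}$, and in addition $D^\star_{B_0}$ carries the identity matching $m_0 = \{vv^\star : v\in V\setminus B_0\}$. First I would fix a linkage $\Pcal$ from $B$ onto $B_0$, i.e.\ a set of disjoint paths $P_v$ (for $v\in B$) with $\Ini(\Pcal)=B$ and $\Ter(\Pcal)=B_0$. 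For each path $P_v = v = u_0, u_1, \dots, u_k$ (with $u_k\in B_0$), each forward edge $(u_{i}, u_{i+1})$ of $D$ is read as the edge $u_{i+1}u_i^\star$ of $D^\star_{B_0}$. The key step is to define a matching $m$ on $V\setminus B$ by taking, for each internal or terminal step, the edge $u_{i+1}u_i^\star$, together with the $m_0$-edges $ww^\star$ for all $w\in V\setminus B$ not touched by any $\Pcal$-edge in the above way; one checks $m$ matches exactly $V\setminus B$ to exactly $(V\setminus B_0)^\star$ (the stars $u_i^\star$ for $i<k$ are used by $\Pcal$-edges, the stars $w^\star$ for untouched $w$ by $m_0$, and no $v^\star$ with $v\in B$ appears since $B$ are initial vertices, while $B_0$ has no stars at all). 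Finiteness of the components of $G[m_0\cup m]$ then follows because every path in $\Pcal$ is finite: tracing a component of $m_0\cup m$ amounts to walking backwards along a path $P_v$ alternating $m$-edges $u_{i+1}u_i^\star$ and $m_0$-edges $u_i u_i^\star$, which terminates at $v\in B$ (no $m$-edge is incident with $v^\star$).

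For the converse, given an $m_0$-matching $m$ of $V\setminus B$ onto $(V\setminus B_0)^\star$, I would analyze the finite components of $G[m_0\cup m]$. Since $m$ saturates $V\setminus B$ and $m_0$ saturates $V\setminus B_0$, and both land in $(V\setminus B_0)^\star$, each finite path component of $m_0\cup m$ has its two endpoints of a controlled type: one endpoint must be an unmatched-by-$m$ left vertex, hence in $B$, and the other must be an unmatched-by-$m_0$ left vertex, hence in $B_0$ (a vertex $v^\star$ cannot be a path endpoint since if $v\in V\setminus B_0$ it is covered by $m_0$, and if the component is a single edge $vv^\star$ then $v\in B\cap B_0$). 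Reading such a component $b = u_0, u_0^\star, u_1, u_1^\star, \dots$ from the $B$-end to the $B_0$-end and dropping the starred vertices recovers a directed path in $D$ from $b\in B$ to some vertex of $B_0$, because consecutive non-$m_0$ edges $u_{i+1}u_i^\star$ are exactly the edges $(u_i, u_{i+1})\in E(D)$. Disjointness of the resulting paths is immediate from the components being vertex-disjoint, and the collection covers all of $B$ (each $b\in B$ is an endpoint of its component) and lands in $B_0$, with surjectivity onto $B_0$ following from a counting/matching argument: every $m_0$-saturated-but-not-$m$-saturated left vertex, i.e.\ every element of $B_0$... wait, $B_0$ vertices are not in $V(m_0)$, so I restate: the components pair up $B$ with $B_0$ bijectively because the starred classes are saturated by exactly one of $m,m_0$ off the overlap.

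The main obstacle I expect is bookkeeping the surjectivity and exhaustiveness claims — ensuring the linkage is \emph{onto} $B_0$ and that \emph{every} vertex of $B$ is covered — cleanly, and in particular making sure the trivial/degenerate components (single $m_0$-edges, isolated vertices in $B\cap B_0$, vertices $v\in B_0\setminus$anything) are accounted for so that the correspondence is genuinely a bijection between linkages onto $B_0$ and $m_0$-matchings onto $(V\setminus B_0)^\star$ rather than merely an injection in each direction. Since $B_0$ consists of sinks of $D$, no edge $(u,v)$ has $u\in B_0$, which guarantees that stars $u_i^\star$ arising along paths have $u_i\notin B_0$, so every such star is a legitimate right vertex; this sink condition is what makes the whole translation well-defined and should be invoked explicitly. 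Once the component analysis is set up, both directions are essentially the same argument read in opposite directions, using that the two constructions of Definitions~\ref{def:dualityConstruction} and~\ref{def:conDimaze} are mutually inverse.
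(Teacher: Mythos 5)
Your proposal is correct and follows essentially the same route as the paper: the forward direction uses the same matching $m=\{vu^\star:(u,v)\in E(\Pcal)\}\cup\{ww^\star:w\notin V(\Pcal)\}$, and your component analysis of $G[m_0\cup m]$ in the converse is exactly the paper's $m_0$-$m$-alternating walks started from the vertices of $B$, read off all at once. One parenthetical slip worth fixing: for $v\in B\setminus B_0$ the star $v^\star$ \emph{is} covered, namely by the $\Pcal$-edge $u_1v^\star$ coming from the first edge of $P_v$ (it must be, or $m$ would fail to be onto $(V\setminus B_0)^\star$); what you presumably mean is that no vertex of $B$ occurs as a \emph{left} endpoint of an $m$-edge, since those left endpoints are precisely the non-initial path vertices together with the vertices off $V(\Pcal)$, i.e.\ exactly $V\setminus B$.
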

\begin{proof}
Suppose a linkage $\Pcal$ from $B$ onto $B_0$ is given. Let 
\[ m : = \{ vu^\star : (u,v) \in E(\Pcal) \} \cup \{ ww^\star : w \notin V(\Pcal) \}.\]
Note that $m$ is a matching from $V \setminus B$ onto $(V\setminus B_0)^\star$ in $D_{B_0}^\star$.
Any component induced by $m_0\cup m$ is finite, since any component which contains more than one edge  corresponds to a path in $\Pcal$. So $m$ is a required $m_0$-matching in $(D,B_0)^\star$.

Conversely let $m$ be an $m_0$-matching from $V \setminus B$ onto $(V\setminus B_0)^\star$. Define a linkage from $B$ onto $B_0$ as follows. 
From every vertex $v\in B$, start an $m_0$-$m$-alternating walk, which is finite because $m$ is an $m_0$-matching. Moreover, the walk cannot end with an $m_0$-edge because $m$ covers $(V\- B_0)^\star$. So the walk is either trivial or ends with an $m$-edge in $B_0$. 
As the $m$-edges on each walk correspond to a path from $B$ to $B_0$,  
together they give us a required linkage in $(D, B_0)$.
\end{proof}

\begin{pro}\label{thm:TmIsLtm}
Let $M_T(G)$ be a transversal matroid and $m_0$ a matching of a base $B$. Then $M_T(G)=M_{PT}(G,m_0)$. 
\end{pro}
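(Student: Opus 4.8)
The plan is to show both inclusions $M_T(G) \supseteq M_{PT}(G,m_0)$ and $M_T(G) \subseteq M_{PT}(G,m_0)$ directly at the level of matchings. The easy inclusion is the first one: every $m_0$-matching is in particular a matching, so any $m_0$-matchable set is matchable, giving $M_{PT}(G,m_0) \subseteq M_T(G)$ immediately from the definitions. The content lies in the reverse inclusion, where we are given a matchable set $I$ and must produce from an arbitrary matching $m$ of $I$ a genuine $m_0$-matching of $I$, i.e.\ one for which $G[m_0 \cup m']$ has only finite components.

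For the reverse inclusion I would argue as follows. Fix a matchable set $I$ and a matching $m$ with $m \cap V = I$. Consider the subgraph $G[m_0 \cup m]$; its components are paths, rays, or double rays. The trivial components ($m_0$-edges at vertices outside $I$) and finite-path components cause no trouble, so the task is to repair the infinite components. Since $m_0$ covers all of $W$, a vertex $w \in W$ can never be an endpoint of a component of $G[m_0 \cup m]$ (it always has its $m_0$-edge), so every endpoint of a finite path component lies in $V$, and in a ray the unique endpoint lies in $V$ as well — that endpoint is either a vertex of $I$ not covered by $m_0$ (impossible, since $I$ is a subset of a base $B$ and $B = V(m_0) \cap V$... wait, here is the subtlety) — more precisely, a ray of $G[m_0\cup m]$ must begin at a vertex of $V$ that is $m$-covered but not $m_0$-covered, hence a vertex of $I \setminus B$, or at a vertex of $V\setminus I$ that is $m_0$-covered but not $m$-covered. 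In either case one can ``rotate'' the ray: along a ray starting with an $m$-edge, replace $m$-edges by $m_0$-edges, producing a matching $m'$ that still covers the same vertices of $V$ that $m$ did except possibly the ray's starting vertex. The key point is that the starting vertex of such a ray is \emph{not} in $I$ (because $m_0$ matches the base $B \supseteq I$, so every vertex of $I$ is $m_0$-covered, hence is an interior vertex of its component), so this rotation does not lose any vertex of $I$. Double rays are handled by splitting at one vertex and rotating one half. Performing these rotations componentwise yields an $m_0$-matching $m'$ of $I$.

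The main obstacle — and the step to be careful about — is verifying that the rotations on different infinite components do not interfere and that after rotation $m'$ is still a matching with $m' \cap V \supseteq I$, i.e.\ that no vertex of $I$ is ever the ``lost'' endpoint of a rotated ray. This is exactly where the hypothesis that $m_0$ matches a \emph{base} $B$ (and $I$ is contained in a base, hence $m_0$-covered on all of $I$) is used: it forces every vertex of $I$ to be an interior vertex of its component in $G[m_0\cup m]$, so the only endpoints that move under rotation lie outside $I$. Once this is established, both inclusions combine to give $M_T(G) = M_{PT}(G,m_0)$. A secondary routine check is that the rotation genuinely kills the infinite component (turns a ray into infinitely many $m_0$-edges plus finite stubs, all finite components), which is immediate since after replacing $m$-edges by $m_0$-edges along a ray the resulting $m_0 \cup m'$ edges along that vertex set form isolated $m_0$-edges.
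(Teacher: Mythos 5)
Your overall strategy --- pass to $G[m_0\cup m]$ and repair the infinite components by swapping their $m$-edges for their $m_0$-edges --- is the same as the paper's, but the step you yourself flag as ``the subtlety'' is exactly where the argument breaks. You justify that no vertex of $I$ is lost by asserting $B\supseteq I$, ``so every vertex of $I$ is $m_0$-covered''. This is false in general: $I$ is an arbitrary matchable set and $B$ is one fixed base; $I$ extends to \emph{some} base, but not necessarily to $B$. Consequently a component of $G[m_0\cup m]$ may a priori be a ray starting at a vertex $v\in I\setminus B$ with an $m$-edge, and your rotation unmatches precisely that vertex, so the resulting $m'$ no longer matches $I$. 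Rotating the other way (keeping the $m$-edges) leaves the component infinite, so neither direction helps: such rays have to be shown not to exist at all.

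That is the one missing idea, and it is where the hypothesis that $B$ is a \emph{base} (rather than just some matchable set) must be used. If $R$ is a ray of $G[m_0\cup m]$ starting at $v\in I\setminus B$ with an $m$-edge, then $m_0\,\Delta\,E(R)$ is a matching covering $B+v$, contradicting the maximality of $B$. (The hypothesis is genuinely needed; if $m_0$ matches a non-maximal set, the proposition fails.) Hence no infinite component meets $V\setminus B$, every left vertex of an infinite component lies in $B$, and replacing the $m$-edges of all infinite components by the $m_0$-edges keeps all of $I$ matched while turning those components into isolated $m_0$-edges --- which is how the paper argues. Two smaller slips to correct as well: a vertex of $W$ that is $m_0$-covered but not $m$-covered has degree $1$ in $G[m_0\cup m]$ and so \emph{is} an endpoint of its component (such rays are harmless, since all their left vertices lie in $B$); and a double ray must have \emph{all} of its $m$-edges replaced --- rotating only one half leaves the other half as an infinite alternating component.
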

\begin{proof}
Suppose $I\subseteq V$ admits a matching $m$. By the maximality of $B$, any infinite component of $m\cup m_0$ does intersect $V\setminus B$. Replacing the $m$-edges of all the infinite components by the $m_0$-edges gives an $m_0$-matching of $I$.
\end{proof}

In fact, the class of path-transversal matroids contains the class of transversal matroids as a proper subclass; see \autoref{thm:ARnotCotransversal} and \autoref{fig:ARIC}. Just as we can extend a linkage to cover the exits by trivial paths, any $m_0$-matching can be extended to cover $W$. 

\begin{lem} \label{thm:extendingToMaximal}
Let $(G,m_0)$ be a bimaze. For any $m_0$-matchable $I$, there is an $m_0$-matching from some $B\supseteq I$ onto $W$.
\end{lem}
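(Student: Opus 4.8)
The plan is to mimic the classical argument that any linkage can be extended to cover the exits by trivial paths, but carried out on the bimaze side with the extra care needed to keep all components of $G[m_0 \cup m]$ finite. Recall that, by the convention established after \autoref{thm:coversRHS}, whenever $M_T(G)$ (or here the relevant set system) has a maximal element we may assume $W$ is covered by a matching; but here we cannot quite invoke that, so instead I would argue directly. Let $I$ be $m_0$-matchable, witnessed by an $m_0$-matching $m$, and let $W' := m(I) \subseteq W$ be the set of right-vertices covered by $m$ (together with the $m_0$-edges $ww^\star$ that $m$ inherits for $w \notin V$-part of $I$, depending on how one writes $m$; in any case $m$ covers some $W' \subseteq W$). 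If $W' = W$ we are done with $B := I$. Otherwise pick $w \in W \setminus W'$ and try to augment.

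First I would look for an $m$-alternating path starting at $w$: since $w$ is unmatched by $m$, follow an edge $wv_1 \in E(G) \setminus m$, then $v_1 v_1^\star$ if $v_1$ is $m$-unmatched we stop, otherwise the $m$-edge out of $v_1$, and so on. Two cases arise. If some such alternating path ends at an $m$-unmatched left vertex, then the symmetric difference of $m$ with this path is a larger matching $m'$ covering $W' + w$; but I must check $m'$ is still an $m_0$-matching, i.e.\ that $G[m_0 \cup m']$ has only finite components. This is the delicate point: flipping along the augmenting path could in principle splice two finite $m_0\cup m$-components into an infinite one, or merge a component with a ray. The key observation is that the augmenting path is itself finite (we can always choose a finite one, or argue that an infinite alternating path from $w$ would let us reach a vertex in $V \setminus (\text{domain of }m)$ only finitely far along), and a finite modification of a matching changes only finitely many components, each of which remains finite because the alternating path has finitely many edges — so $G[m_0 \cup m']$ still has all components finite. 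If instead no augmenting path from $w$ reaches an unmatched left vertex, then $w$ and everything reachable from it by $m$-alternating paths forms a "saturated" region, and I add the edge of $G$ from $w$ into this region in a way that extends the matching by first absorbing $w$; more carefully, in this case one shows $w$ has a neighbour $v$ with $v v^\star$ the only $m_0$-edge at $v$ and $v$ already matched, and reroutes finitely.

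Then I would iterate, but since $W$ may be infinite a single-step induction is not enough: I would apply Zorn's Lemma to the poset of $m_0$-matchings $m'$ with $I \subseteq \Ini$-side and $m \subseteq m'$ along components, ordered so that an upper bound of a chain is the union (which is again a matching because the chain is directed, and is still an $m_0$-matching because any component of $G[m_0 \cup \bigcup m']$ is a component of some $G[m_0 \cup m']$ in the chain — each vertex and edge appears at some stage, and once a component is finite it stays finite since the $m'$ only grow by attaching new finite pieces at unmatched vertices). A maximal element $\hat m$ of this poset must cover all of $W$: if some $w \in W$ were uncovered, the augmentation step above produces a strictly larger $m_0$-matching, contradicting maximality. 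Setting $B$ to be the left-vertices covered by $\hat m$ gives an $m_0$-matching from $B \supseteq I$ onto $W$, as required.

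The main obstacle is the bookkeeping that augmentation preserves the $m_0$-matching property, i.e.\ that we never create an infinite component — this is exactly where the finite case is trivial and the infinite case needs the observation that augmenting paths can be taken finite and that finite symmetric differences preserve "all components finite". A secondary subtlety is verifying that the union over a Zorn chain is still an $m_0$-matching; I expect that to follow cleanly from the fact that membership of any edge in a component is witnessed at a finite stage, but it should be stated carefully. Everything else — the alternating-path analysis, the two-case split on whether $w$ can be augmented — is routine and parallels the standard proof for transversal matroids (cf.\ \cite{BS68}).
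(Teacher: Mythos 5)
There is a genuine gap, and it sits exactly at the point you flagged as a ``secondary subtlety'': the union over a chain of $m_0$-matchings need \emph{not} be an $m_0$-matching, so the Zorn's Lemma step fails. Your justification --- ``once a component is finite it stays finite'' --- is false: a component of $G[m_0\cup\bigcup m']$ is the increasing union of the (finite) components containing a fixed vertex at each stage, and that union can be a ray. Concretely, take $V=\{v_0,v_1,v_2,\dots\}$, $W=\{w_1,w_2,\dots\}$, $m_0=\{v_iw_i: i\geq 1\}$, with additional edges $v_{i-1}w_i$ for $i\geq 1$. Starting from $m=\emptyset$ and augmenting $w_1,w_2,\dots$ greedily along single edges gives the chain $m_k=\{v_{i-1}w_i: 1\leq i\leq k\}$; each $m_k$ is an $m_0$-matching (its long component is the finite path $v_0w_1v_1\cdots w_kv_k$), the chain is even nested as edge sets, yet $m_0\cup\bigcup_k m_k$ contains the ray $v_0w_1v_1w_2\cdots$. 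So this chain has no upper bound in your poset and the maximal element you invoke need not exist. Two further problems: your augmenting paths run along arbitrary edges of $G$, so neither their existence nor their finiteness is guaranteed, and your ``case 2'' is settled only by an unexplained rerouting; and since flipping along an augmenting path removes edges of $m$, the successive matchings are not nested, so the order on your poset is not actually defined.

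All of this disappears if you alternate only along $m_0\cup m$ and perform every augmentation \emph{simultaneously}, which is what the paper does. Since $m$ is an $m_0$-matching, each $w\in W$ not covered by $m$ lies in a \emph{finite} path component of $m_0\cup m$; because $m_0$ covers $W$, this path starts at $w$ with its $m_0$-edge, alternates $m_0,m,m_0,\dots$, and must end at a vertex of $V$ that is covered by $m_0$ but not by $m$. Taking the symmetric difference of $m$ with the union of all components meeting the uncovered part of $W$ replaces, on each such component, the $m$-edges by the $m_0$-edges. The result covers all of $W$, still covers every left vertex that $m$ covered (hence some $B\supseteq I$), and is again an $m_0$-matching because on the modified components it agrees with $m_0$. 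This single global switch needs no iteration and no limit argument, which is precisely the idea your proposal is missing.
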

\begin{proof}
Let $m$ be an $m_0$-matching of $I$. Take the union of all connected components of $m\cup m_0$ that meet $W-m$. The symmetric difference of $m$ and this union is a desired $m_0$-matching of a superset of $I$. 
\end{proof}

We find it convenient to abstract two properties of a dimaze and a bimaze.    
Given a dimaze $(D, B_0)$, let $(\dagger)$ be
\[I\in M_L(D,B_0) \mbox{ is maximal } \Leftrightarrow \exists \mbox{ linkage from } I \mbox{ onto } B_0  \tag{$\dagger$}.\]
Analogously, given a bimaze $(G,m_0)$, let $(\ddagger)$ be
\[I\in M_{PT}(G, m_0) \mbox{ is maximal } \Leftrightarrow \exists \mbox{ $m_0$-matching from } I \mbox{ onto } (V\setminus B_0)^\star.  \tag{$\ddagger$}\]
In some sense $(\dagger)$ and $(\ddagger)$ are dual to each other.

\begin{lem}\label{thm:daggerDuality}
A dimaze $(D,B_0)$ satisfies $(\dagger)$ iff $(D,B_0)^\star$ satisfies $(\ddagger)$. 
\end{lem}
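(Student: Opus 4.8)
The plan is to transport everything through the complementation bijection $B\mapsto V\setminus B$ on $2^{V}$ together with \autoref{thm:legalLinkageLemma}, which already records that $B$ is linkable onto $B_0$ in $(D,B_0)$ if and only if $V\setminus B$ is $m_0$-matchable onto $(V\setminus B_0)^\star$ in $(D,B_0)^\star$. Both $(\dagger)$ and $(\ddagger)$ are biconditionals, but the ``maximal $\Rightarrow$ onto'' direction is automatic on each side and can be disposed of first. On the dimaze side: if $\Pcal$ is a linkage of $I$ and some exit $b$ is missed, then $b$, being a sink, occurs in no path of $\Pcal$, so $\Pcal\cup\{\{b\}\}$ linkably extends $I$; hence a maximal linkable set is already linkable onto $B_0$. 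On the bimaze side: \autoref{thm:extendingToMaximal} extends any $m_0$-matchable set to one matchable onto $(V\setminus B_0)^\star$, so a maximal $m_0$-matchable set is already matchable onto $(V\setminus B_0)^\star$. Thus the real content of the lemma is the equivalence of the two ``onto $\Rightarrow$ maximal'' statements, and that is what I would prove directly.

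For $(\dagger)\Rightarrow(\ddagger)$, assume $(\dagger)$ and let $I$ be $m_0$-matchable onto $(V\setminus B_0)^\star$. By \autoref{thm:legalLinkageLemma} (with $B:=V\setminus I$) the set $V\setminus I$ is linkable onto $B_0$, in particular linkable. Suppose, for contradiction, that $I\subsetneq I'$ with $I'$ $m_0$-matchable; by \autoref{thm:extendingToMaximal} there is $B'\supseteq I'$ with an $m_0$-matching onto $(V\setminus B_0)^\star$, so by \autoref{thm:legalLinkageLemma} the set $V\setminus B'$ is linkable onto $B_0$, whence $(\dagger)$ makes it a \emph{maximal} linkable set. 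But $V\setminus B'\subseteq V\setminus I'\subsetneq V\setminus I$ and $V\setminus I$ is linkable, contradicting maximality of $V\setminus B'$. Hence $I$ is maximal $m_0$-matchable, which is the nontrivial half of $(\ddagger)$; the other half is the automatic one recorded above (extend a maximal $m_0$-matchable set via \autoref{thm:extendingToMaximal}; the extension is matchable onto $(V\setminus B_0)^\star$, hence maximal by the half just proved, hence equal to the original set).

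The converse $(\ddagger)\Rightarrow(\dagger)$ is the mirror argument. Assume $(\ddagger)$ and let $I$ be linkable onto $B_0$ in $(D,B_0)$; by \autoref{thm:legalLinkageLemma} the set $V\setminus I$ is $m_0$-matchable onto $(V\setminus B_0)^\star$, in particular $m_0$-matchable. If $I\subsetneq I'$ with $I'$ linkable, take a linkage $\Pcal'$ of $I'$ and append trivial paths at the exits it misses to obtain $I''\supseteq I'$ linkable onto $B_0$ (legitimate as before, since missed exits are sinks absent from $\Pcal'$). Then $V\setminus I''$ is $m_0$-matchable onto $(V\setminus B_0)^\star$ by \autoref{thm:legalLinkageLemma}, hence maximal $m_0$-matchable by $(\ddagger)$, while $V\setminus I''\subseteq V\setminus I'\subsetneq V\setminus I$ and $V\setminus I$ is $m_0$-matchable --- contradiction. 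So $I$ is a maximal linkable set, and the remaining implication of $(\dagger)$ is again the automatic one.

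I expect the main obstacle to be conceptual rather than computational: in the infinite setting $M_L(D,B_0)$ and $M_{PT}((D,B_0)^\star)$ need not be matroids (they may fail (IM)), so one cannot simply quote matroid duality and ``complements of bases are bases'' to move between maximal sets. The correspondence must be produced by hand, and the device that makes it work is the containment squeeze $V\setminus B'\subseteq V\setminus I'\subsetneq V\setminus I$ (and its mirror): the hypothesis $(\dagger)$ or $(\ddagger)$ is exactly strong enough to upgrade the ``onto''-witness to a maximal independent set, which then collides with the strictly larger independent set produced from the assumed counterexample. The only place demanding a little care is the verification that appending trivial paths at exits really yields a linkage, which is where it matters that the exits are sinks.
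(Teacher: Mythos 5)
Your proof is correct and takes essentially the same route as the paper's: transport the ``onto'' witnesses across the star construction via \autoref{thm:legalLinkageLemma}, upgrade matchable sets with \autoref{thm:extendingToMaximal}, and squeeze a maximal set between two nested independent sets. The only cosmetic difference is that you obtain the forward direction of $(\ddagger)$ directly from \autoref{thm:extendingToMaximal}, where the paper re-runs the underlying $m_0$-$m$-alternating-walk argument by hand.
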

\begin{proof}
Assume $(D, B_0)$ satisfies $(\dagger)$. To prove the backward direction of $(\ddagger)$, suppose there is an $m_0$-matching from $V\setminus B$ onto $(V\setminus B_0)^\star$. 
By \autoref{thm:legalLinkageLemma}, there is a linkage from $B$ onto $B_0$. Therefore, $B$ is maximal in $M_L(D, B_0)$ by $(\dagger)$. By \autoref{thm:extendingToMaximal}, any $m_0$-matchable superset of $V\setminus B$ may be extended to one, say $V\- I$, that is $m_0$-matchable onto $(V\- B_0)^\star$.  As before, $I\subseteq B$ is maximal in $M_L(D, B_0)$, so $I=B$ and hence, $V\- B$ is a maximal $m_0$-matchable set. To see the forward direction of $(\ddagger)$, suppose $V\-B$ is a maximal $m_0$-matchable set witnessed by an $m_0$-matching $m$,
that does not cover $v^\star \in (V\setminus B_0)^\star$. 
As $m$ is an $m_0$-matching, a maximal $m_0$-$m$-alternating walk starting from $v^\star$ ends at some vertex in $B$. So the symmetric difference of this walk and $m$ is an $m_0$-matching of a proper superset of $V\-B$ which is a contradiction.

Assume $(D,B_0)^\star$ satisfies $(\ddagger)$. 
The forward direction of $(\dagger)$ is trivial.
For the backward direction, 
suppose there is a linkage from $B$ onto $B_0$. Then there is an $m_0$-matching from $V\setminus B$ onto $(V\setminus B_0)^\star$ by \autoref{thm:legalLinkageLemma}. By $(\ddagger)$, $V\setminus B$ is maximal in $M_{PT}(D,B_0)^\star$. With an argument similar to the above, we can conclude that $B$ is maximal in $M_L(D, B_0)$.
\end{proof}

Now let us see how $(\dagger)$ helps to identify the dual of a strict gammoid. 

\begin{lem}
\label{thm:duality}
If a dimaze $(D,B_0)$ satisfies $(\dagger)$, then the dual of $M_L(D,B_0)$ is $M_{PT}(D,B_0)^\star$.
\end{lem}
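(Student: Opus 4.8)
The goal is to show that if $(D,B_0)$ satisfies $(\dagger)$, then $M_L(D,B_0)^* = M_{PT}(D,B_0)^\star$. The natural strategy is to compare bases. Recall $M^*$ has as its bases exactly the complements of bases of $M$, so it suffices to verify that the bases of $M_{PT}(D,B_0)^\star$ are precisely the sets of the form $V \setminus B$ where $B$ is a base of $M_L(D,B_0)$. In fact I would go a little further and also check that $M_{PT}(D,B_0)^\star$ is genuinely a matroid (so that ``dual'' makes sense), although if the paper's convention is that $M^*$ is defined purely via the complemented-bases family this may come for free once the base families match up and $M_L(D,B_0)$ is known to be a matroid.

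\textbf{Step 1: Characterize the maximal $m_0$-matchable sets.} Using \autoref{thm:daggerDuality}, the hypothesis $(\dagger)$ on $(D,B_0)$ gives $(\ddagger)$ for $(D,B_0)^\star$: a set $V \setminus B$ is maximal in $M_{PT}(D,B_0)^\star$ if and only if there is an $m_0$-matching from $V \setminus B$ onto $(V \setminus B_0)^\star$. So the bases of $M_{PT}(D,B_0)^\star$ are exactly the sets $J \subseteq V$ such that $J$ is $m_0$-matchable onto $(V\setminus B_0)^\star$.

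\textbf{Step 2: Translate via the linkage--matching correspondence.} By \autoref{thm:legalLinkageLemma}, $V \setminus B$ is $m_0$-matchable onto $(V \setminus B_0)^\star$ in $(D,B_0)^\star$ if and only if $B$ is linkable onto $B_0$ in $(D,B_0)$. Combining with $(\dagger)$ once more, $B$ is linkable onto $B_0$ exactly when $B$ is a base of $M_L(D,B_0)$. Chaining the equivalences: $J$ is a base of $M_{PT}(D,B_0)^\star$ $\iff$ $V \setminus J$ is linkable onto $B_0$ $\iff$ $V\setminus J$ is a base of $M_L(D,B_0)$. Hence the base family of $M_{PT}(D,B_0)^\star$ is exactly $\{\, V \setminus B : B \in \Bcal(M_L(D,B_0)) \,\}$, which is by definition the base family of $M_L(D,B_0)^*$. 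Since a matroid is determined by its bases, $M_L(D,B_0)^* = M_{PT}(D,B_0)^\star$, and in particular the latter is a matroid.

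\textbf{Main obstacle.} The delicate point is handling infinite components: an $m_0$-matching is required to induce only finite components with $m_0$, and a linkage consists of genuine finite paths, so the two subtle directions are (a) an arbitrary matching onto $(V\setminus B_0)^\star$ need not be an $m_0$-matching, and (b) closing up the argument that a linkage onto $B_0$ forces $B$ to be a \emph{base} rather than merely maximal-among-onto-sets. Both of these are exactly what $(\dagger)$/$(\ddagger)$ and \autoref{thm:extendingToMaximal} are designed to control, and they have already been packaged into \autoref{thm:daggerDuality} and \autoref{thm:legalLinkageLemma}; so the proof here should reduce to carefully concatenating those two lemmas in the right order, with the only real care being to ensure ``maximal $m_0$-matchable set'' and ``base of $M_{PT}(D,B_0)^\star$'' are used interchangeably (which is valid because $M_{PT}(D,B_0)^\star$ is being asserted to be a matroid, whose independent sets satisfy (I2), so maximal independent = base).
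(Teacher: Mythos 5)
Your proposal follows the paper's proof essentially verbatim: apply \autoref{thm:daggerDuality} to transfer $(\dagger)$ to $(\ddagger)$, then chain \autoref{thm:legalLinkageLemma} with $(\dagger)$ and $(\ddagger)$ to identify the maximal sets of $M_{PT}(D,B_0)^\star$ with the complements of the bases of $M_L(D,B_0)$. The one step to tighten is your closing ``a matroid is determined by its bases'': since $M_{PT}(D,B_0)^\star$ is a priori only a set system, agreement of maximal elements does not yet give equality of the two set systems, and the non-circular fix is to note that $m_0$-matchability is downward closed and that every $m_0$-matchable set extends to a maximal one by \autoref{thm:extendingToMaximal} together with $(\ddagger)$ --- which is exactly how the paper finishes, and which you already flag (albeit with a circular justification) in your obstacle paragraph.
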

\begin{proof}
By \autoref{thm:daggerDuality}, $(D,B_0)^\star$ satisfies $(\ddagger)$. 
Let $B$ be an independent set in $M_L(D, B_0)$. Then $B$ is maximal if and only if there is a linkage from $B$ onto $B_0$. 
By \autoref{thm:legalLinkageLemma}, this holds if and only if there is an $m_0$-matching from $V\setminus B$ onto $(V\setminus B_0)^\star$, which by $(\ddagger)$ is equivalent to $V\setminus B$ being maximal in $M_{PT}(D,B_0)^\star$.

To complete the proof, it remains to see that every $m_0$-matchable set can be extended to a maximal one, which follows from  \autoref{thm:extendingToMaximal} and $(\ddagger)$.
\end{proof}

Note that while we do not need it,  the twin of \autoref{thm:duality} is true, namely, 
if a bimaze $(G, m_0)$ satisfies $(\ddagger)$, then $M_{PT}(G, m_0)$ is a matroid dual to $M_L(G,m_0)^\star$.

\medskip
To summarize, the dual of strict gammoids examined in \autoref{thm:ACfree} is given as follows.

\begin{thm}\label{thm:legalDuality}
(i) Given a $C^A$-free dimaze $(D, B_0)$, $M_L(D,B_0)$ is a matroid dual to $M_{PT}(D,B_0)^\star$.

(ii) Given a bimaze $(G,m_0)$, if $(G,m_0)^\star$ is $C^A$-free, then $M_{PT}(G,m_0)$ is a matroid dual to $M_L(G,m_0)^\star$.
\end{thm}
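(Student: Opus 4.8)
The plan is to read off both statements from \autoref{thm:ACfree} together with the duality machinery already set up --- property $(\dagger)$, \autoref{thm:legalLinkageLemma}, \autoref{thm:duality} --- and the involutivity \eqref{E:intdergraph} of the conversion operations $(\,\cdot\,)^\star$. For~(i): since $(D,B_0)$ is $C^A$-free, \autoref{thm:ACfree}(ii) gives that $M_L(D,B_0)$ is a strict gammoid, in particular a matroid; and \autoref{thm:ACfree}(i) says that in a $C^A$-free dimaze the sets linkable onto $B_0$ are exactly the maximal linkable ones, which is precisely the assertion that $(D,B_0)$ satisfies $(\dagger)$. Feeding this into \autoref{thm:duality} yields that the dual of $M_L(D,B_0)$ equals $M_{PT}(D,B_0)^\star$; since the dual of a matroid is a matroid, $M_{PT}(D,B_0)^\star$ is in particular a path-transversal matroid, and it has been identified as the dual of $M_L(D,B_0)$. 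That is~(i).

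For~(ii): apply~(i) to the dimaze $(G,m_0)^\star$, which is $C^A$-free by hypothesis. This tells us that $M_L\big((G,m_0)^\star\big)$ is a matroid dual to $M_{PT}\big(((G,m_0)^\star)^\star\big)$. By \eqref{E:intdergraph} we have $((G,m_0)^\star)^\star=(G,m_0)$, so the right-hand matroid is just $M_{PT}(G,m_0)$; taking duals once more (using $M^{**}=M$) gives that $M_{PT}(G,m_0)$ is a matroid whose dual is $M_L\big((G,m_0)^\star\big)=M_L(G,m_0)^\star$, which is~(ii).

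Essentially all of the substantive content lives in \autoref{thm:ACfree} and in \autoref{thm:legalLinkageLemma}--\autoref{thm:duality}, so there is no genuine obstacle here; the only thing that requires care is the bookkeeping of the two $\star$-operations --- in~(ii) it is the converted dimaze $(G,m_0)^\star$, not $(G,m_0)$ itself, that must be $C^A$-free for \autoref{thm:ACfree} to apply --- and making sure that $C^A$-freeness is used to supply $(\dagger)$ for a dimaze rather than $(\ddagger)$ for a bimaze, so that \autoref{thm:duality} is invoked in the correct direction. (Should one prefer to argue on the bimaze side, \autoref{thm:daggerDuality} is exactly what translates $(\dagger)$ into $(\ddagger)$.)
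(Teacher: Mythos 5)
Your proposal is correct and follows exactly the paper's own argument: part (i) is obtained by combining \autoref{thm:ACfree} (which supplies both matroidhood and property $(\dagger)$) with \autoref{thm:duality}, and part (ii) by applying (i) to $(G,m_0)^\star$ together with the involution \eqref{E:intdergraph}. The only difference is that you spell out the bookkeeping that the paper leaves implicit.
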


\begin{proof}
(i) This is the direct consequence of \autoref{thm:ACfree} and \autoref{thm:duality}.

(ii) Apply part (i) and \eqref{E:intdergraph}.
\end{proof}

One might hope that, in the first part of the theorem, the path-transversal matroid $M_{PT}(D, B_0)^\star$ is in fact the transversal matroid $M_T(D, B_0)^\star$.
However, the dimaze $R^I$ defines a strict gammoid whose dual is not the transversal matroid defined by the converted bimaze.
It turns out that $R^I$ is the only obstruction to this hope.

\begin{thm}
\label{thm:ARIRduality}
(i) Given an $\{R^I,C^A\}$-free dimaze $(D,B_0)$, $M_L(D,B_0)$ is a matroid dual to $M_T(D^\star_{B_0})$. 

(ii) Given a bimaze $(G,m_0)$, if $(G,m_0)^\star$ is $\{R^I,C^A\}$-free, then $M_T(G)$ is a matroid dual to $M_L(G,m_0)^\star$.
\end{thm}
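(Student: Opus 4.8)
The plan is to derive both parts from \autoref{thm:legalDuality} by showing that, under the additional hypothesis, the path-transversal matroid occurring there is actually a transversal matroid. For (i): \autoref{thm:legalDuality}(i) already gives that $M_L(D,B_0)$ is a matroid whose dual is $M_{PT}(D,B_0)^\star = M_{PT}(D^\star_{B_0},m_0)$; in particular the latter is a matroid, and hence so is $M_T(D^\star_{B_0})$ once the two are shown to have the same independent sets. As every $m_0$-matching is a matching, $M_{PT}(D^\star_{B_0},m_0)\subseteq M_T(D^\star_{B_0})$ trivially, so the whole content is the converse: every subset of $V$ that is matchable in $G:=D^\star_{B_0}$ is in fact $m_0$-matchable, and this is where $R^I$-freeness of $(D,B_0)$ enters. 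Part (ii) then follows from (i) and \eqref{E:intdergraph} exactly as \autoref{thm:legalDuality}(ii) follows from \autoref{thm:legalDuality}(i): take $(D,B_0):=(G,m_0)^\star$, which is $\{R^I,C^A\}$-free by hypothesis, apply (i), and use $(G,m_0)^{\star\star}=(G,m_0)$ together with the fact that matroid duality is involutive.

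For the core claim, fix a matching $m$ of a set $I$ in $G$ and let $H:=G[m_0\cup m]$. Since $H$ is the union of two matchings, each of its components is a finite path, a finite cycle, a ray, or a double ray. I would turn $m$ into an $m_0$-matching of a superset of $I$ by rerouting along the infinite components: set $m':=m\,\Delta\bigcup_{K}E(K)$, where $K$ runs over the infinite components of $H$. Everything hinges on the observation, justified below, that \emph{for every infinite component $K$ of $H$ the set $m_0\cap E(K)$ is a perfect matching of $V(K)$}. Granting this and using that components of $H$ are vertex-disjoint and contain every $m_0$- and $m$-edge incident to their vertices, one checks that the edges of $m'$ incident to a given infinite component $K$ are exactly $m_0\cap E(K)$, while $m'$ agrees with $m$ off $\bigcup_K V(K)$; consequently $m'$ is a matching that covers every vertex $m$ covers (hence all of $I$) as well as every vertex on an infinite component of $H$, and in $G[m_0\cup m']$ each $V(K)$ carries only single-edge components while the rest coincides with the (finite) components of $H$. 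Thus all components of $G[m_0\cup m']$ are finite, i.e.\ $m'$ is an $m_0$-matching of a set containing $I$, proving $I$ is $m_0$-matchable.

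It remains to establish the observation. If $K$ is a double ray, or a ray whose unique end-edge lies in $m_0$, then $m_0\cap E(K)$ consists of every second edge of $K$ (including an edge incident to the end, in the ray case), and a direct check shows these edges cover $V(K)$, so they form a perfect matching. The only other possibility is that $K$ is a ray whose end-edge $e_0$ lies in $m$ but not in $m_0$; I claim $R^I$-freeness rules this out. The endpoint $p$ of $K$ is then incident in $H$ only to the $m$-edge $e_0$, hence is not covered by $m_0$; since in $G$ every left vertex in $V\setminus B_0$ and every right vertex is covered by $m_0$, this forces $p\in B_0$. Traversing $K$ from $p$, with its edges alternating between $m_0$ and $m$, one reads off distinct vertices $p=u_0,\ u_1=v_1^\star,\ u_2=v_1,\ u_3=v_2^\star,\ u_4=v_2,\dots$ with all $v_j\in V\setminus B_0$, so that the $m$-edges of $K$ are $pv_1^\star,\ v_1v_2^\star,\ v_2v_3^\star,\dots$, each outside $m_0$. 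By the definition of $D^\star_{B_0}$, an edge $xy^\star\notin m_0$ of $G$ corresponds to the edge $(y,x)$ of $D$, so these $m$-edges translate to the $D$-edges $(v_1,p),\ (v_2,v_1),\ (v_3,v_2),\dots$, which together with their endpoints form a subdivision of $R^I$ inside $(D,B_0)$ with exit $p$ --- contradicting that $(D,B_0)$ is $R^I$-free.

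The part I expect to require the most care is the bookkeeping around the simultaneous rerouting: checking that $m\,\Delta\bigcup_K E(K)$ is still a matching and still covers $I$ --- this relies precisely on the fact that in a union of two matchings a component absorbs all incident edges of either matching, and that an infinite component contains no edge of $m_0\cap m$ --- together with verifying that the translated ray really is a subdimaze isomorphic to a subdivision of $R^I$, in particular that its endpoint is an exit and that no interior vertex of $K$ lies in $B_0$ (true since a $B_0$-vertex has $m$-degree at most one and is uncovered by $m_0$, so could only occur as an endpoint of $K$).
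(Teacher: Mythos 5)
Your proposal is correct and takes essentially the same route as the paper: the paper deduces (i) from \autoref{thm:legalDuality}(i) together with the fact that $M_T(D^\star_{B_0})=M_{PT}(D,B_0)^\star$ for $R^I$-free dimazes, whose proof it omits as being ``similar to'' that of \autoref{thm:TmIsLtm} --- precisely the rerouting of $m$-edges to $m_0$-edges along infinite components of $G[m_0\cup m]$ that you carry out, with $R^I$-freeness ruling out the one bad ray configuration. Part (ii) is obtained in both cases by applying (i) to $(G,m_0)^\star$ and invoking $(G,m_0)^{\star\star}=(G,m_0)$.
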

\begin{proof}
(i) This follows from \autoref{thm:legalDuality}(i) and the fact that for
an $R^I$-free dimaze $(D,B_0)$, we have $M_T(D^\star_{B_0})=M_{PT}(D,B_0)^\star$.
The proof of the latter is similar to the one given to \autoref{thm:TmIsLtm} and omitted.

(ii) Apply part (i) and \eqref{E:intdergraph}. 
\end{proof}

It appears that $C^A$ is a natural constraint in the above theorem. 
\begin{exm}
\label{thm:ARnotCotransversal}
The strict gammoid defined by the dimaze $C^A$ (\autoref{fig:ARIC}a) is not cotransversal.
\end{exm}
\begin{proof}
Since $V\setminus B_0 + v$ is a base for every $v \in B_0$, $B_0$ is an infinite cocircuit. On the other hand, every vertex $v$ of $B_0$ is contained in a finite cocircuit, namely $v$ and its in-neighbours. 
So by \autoref{thm:TransversalInfiniteCircuit}, the dual is not transversal.
\end{proof}

Here is a question which is in some sense converse to \autoref{thm:ARIRduality}(i).

\begin{que}
\label{que:cotransversalACIR}
Is every cotransversal strict gammoid $\{C^A, R^I\}$-free? 
\end{que}

Although the class of path-transversal matroids contains that of transversal matroids properly, not every strict gammoid has its dual of this type. To show this, we first note that in a path-transversal matroid $M_{PT}(G, m_0)$, if $C$ is the fundamental circuit of $u$, then 
$N(C) = m_0(C-u)$. Indeed, $N(u) \subseteq m_0(C-u)$; and for any $v\in C-u$, since there is an $m_0$-alternating path from $u$ ending in $v$, $v$ cannot have any neighbour outside $m_0(C-u)$. 

\begin{exm}\label{thm:infinitetree-legal} 
Let $T$ be a rooted tree such that each vertex has infinitely many children, with edges directed towards $B_0$, which consists of the root and vertices on alternating levels. Then $M_L(T,B_0)$ is a strict gammoid that is not dual to any path-transversal matroid.
\end{exm}

\begin{proof}
In \cite[Theorem 3.4]{ALM}, it was proved that $M:=M_L(T, B_0)$ is a matroid. Suppose that $M^*=M_{PT}(G, m')$. Let $\Qcal$ be a linkage of $B:=V - m'$ to $B_0$. 
Since $(T, B_0)$ is $C^O$-free, by \autoref{thm:S_shiftedToExits}, we have $M= M_L(D_1, B_1)$ where $(D_1, B_1)$ is the $\Qcal$-shifted dimaze. 
By construction, the underlying graph of $D_1$ is also a tree. 

By \cite[Corollary 3.6]{ALM}, $(D_1, B_1)$ contains a subdivision $R$ of $C^A$. Let $\{s_i: i \geq 1\} :=R\cap B_1$ and  
$U:=\{ u_i: i \geq 1\}$ be the set of vertices of out-degree 2 on $R$ such that $u_i$ is joined to $s_i$ and $s_{i+1}$ in $R$. 
Let 
$U_i := \{ v\in T: v\mbox{ is separated from $R\cap B_1$ by $u_i$}\}$ and 
$S_i := \{ v\in T: v$ can be linked to $s_i$ in $D_1\- U\}$.

Since $D_1$ is a tree, $\{ U_i, S_i: i\geq 1 \}$ is a collection of pairwise disjoint sets. 

Let $C := \bigcup_{i\geq 1} S_i$. Any linkable set in $V\- C$ has a linkage that misses an exit in $R\cap B_1$. Since $D_1$ is a tree, $(B_1- R)\cup U + c$ for any $c\in C$ is a base of $M$. Hence, $C$ is a circuit in $M^*$. For a contradiction, we construct an $m'$-matching of $C$ in $(G, m')$. 

In $M^*$, the fundamental circuit of $s_i$ with respect to $B_1$ is $S_i\cup U_{i-1}\cup U_i$ (with $U_0 = \emptyset$). By the remark before the example,  $N(S_i\cup U_{i-1}\cup U_i) = m'(S_i\cup U_{i-1}\cup U_i - s_i)$ for $i \geq 1$. 

We claim that for $i\geq 1$, in any $m'$-matching $m$ of $\bigcup_{j\leq i} S_j$, the maximal $m$-$m'$-alternating walk from $s_j$ ends in $m'(U_j)$ for $j\leq i$. Note that such a walk cannot end in $m'(S_j)$ as those vertices are incident with $m$-edges. 
Since $N(S_1)\subseteq m'(S_1 \cup U_1)$, the claim is true for $i=1$. Assume that it is true for $i-1$. Consider an $m'$-matching $m_1$ of $\bigcup_{j\leq i} S_j$. Let $P_j$ be the maximal $m_1$-$m'$-alternating walk starting from $s_j$. By assumption, $P_j$ ends in $m'(U_j)$ for each $j<i$. As $P_i$ ends in $m'(U_{i-1}\cup U_i)$, we are done unless it ends in $m'(U_{i-1})$. In that case, the union of an $m'$-matching of $C\- \bigcup_{j\leq i} S_j$ with  
\[ (m'  \upharpoonright {\bigcup_{j\leq i} S_j}) \Delta \bigcup_{j\leq i} E(P_j)  \]
is an $m'$-matching of $C$, a contradiction. 

Therefore, there is a collection of pairwise disjoint $m'$-alternating walks $\{ P'_i : i\geq 1\}$ where $P'_i$ starts from $s_i$ and ends in $m'(U_i)$. Then $m' \Delta \bigcup_{i\geq 1} E(P'_i)$ is an $m'$-matching of $C$, a contradiction which completes the proof. 
\end{proof}

Here is a question similar to \autoref{que:cotransversalACIR} akin to \autoref{thm:legalDuality}.

\begin{que}
\label{que:ACclt}
Is every strict gammoid which is dual to a path-transversal matroid $C^A$-free?
\end{que} 

It may be interesting to investigate path-transversal systems further. For example, while a path-transversal system need not satisfy (IM), it might be the case that (I3) always holds. 
\begin{con}
\label{con:legal_I3}
Any path-transversal system satisfies (I3). 
\end{con}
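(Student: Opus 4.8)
The plan is to translate the exchange axiom into a statement about alternating paths between matchings, imitating the proofs of \autoref{thm:legalLinkageLemma} and \autoref{thm:extendingToMaximal}. Fix an $m_0$-matchable set $I$ that is not maximal, together with an $m_0$-matching $m$ covering exactly $I$ on the left, and a maximal $m_0$-matchable set $I'$ with an $m_0$-matching $m'$. By \autoref{thm:extendingToMaximal} there is an $m_0$-matching from some $B\supseteq I'$ onto $W$; since $I'$ is maximal, $B=I'$, so we may assume that $m'$ covers exactly $I'$ on the left and all of $W$ on the right. That $I$ is not maximal means there is a $z\notin I$ with $I+z$ an $m_0$-matchable set, witnessed by some $m_0$-matching $m_z$.

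Next I would analyse the graph $m\,\triangle\,m'$, each component of which is a finite path, a finite even cycle, a ray, or a double ray alternating between $m$ and $m'$. A degree count shows that its degree-one vertices are precisely those in $(I\,\triangle\,I')\cup(W\setminus V(m))$, and a parity argument then shows that the component through any $x\in I'\setminus I$ is of one of three types: (i) a finite path $P$ from $x$ to a vertex $w^{*}\in W\setminus V(m)$; (ii) a finite path from $x$ to a vertex $y\in I\setminus I'$; or (iii) a ray starting at $x$. In case (i) the matching $m\,\triangle\,E(P)$ covers $I+x$, and since it differs from $m$ only at edges incident to the finite set $V(P)$, every component of $m_0\cup(m\,\triangle\,E(P))$ remains finite; hence $I+x$ is $m_0$-matchable and we are done. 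So the real content of the conjecture is the case where \emph{every} $x\in I'\setminus I$ is of type (ii) or (iii), and here the non-maximality of $I$ must finally be used. In $m\,\triangle\,m_z$ the vertex $z$ is the unique degree-one left vertex, so it lies on a path or ray ending, if finite, at some $w^{*}\in W\setminus V(m)$; the idea would be to overlay this augmenting structure for $m$ with the type-(ii) components of $m\,\triangle\,m'$, re-routing through $m'$-edges until the walk terminates at a left vertex of $I'\setminus I$, and then flip along it to obtain an $m_0$-matching of $I+x$.

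The main obstacle is the ray case (iii), compounded by the absence of any Menger- or cardinality-type count in the infinite setting: a ray can be flipped only by changing infinitely many edges, which need not preserve the $m_0$-matching property, and it cannot be truncated to a finite augmenting path without uncovering a vertex again; moreover $m_z$ itself need not arise from $m$ by a finite augmentation, so the re-routing may also meet infinite alternating components. This is exactly the phenomenon that on the dual side is governed by $C^A$-freeness: under \autoref{thm:legalLinkageLemma} the $m_0$-matchings-onto-$W$ of $(G,m_0)$ correspond to the linkages onto $B_0$ of the converted dimaze $(G,m_0)^{\star}$, and the rays above correspond to $C^A$-type configurations, which by \autoref{thm:ACfree}(i) are precisely the obstruction to ``linkable onto $B_0$'' coinciding with ``maximally linkable''. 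I would therefore first establish (I3) when $(G,m_0)^{\star}$ is $C^A$-free — where \autoref{thm:legalDuality}(i) already exhibits $M_{PT}(G,m_0)$ as a matroid, so that (I3) is automatic — and then attempt to reduce the general case to this one, for instance by importing the ``shifting along a linkage'' machinery of \autoref{sec:minor}, or Pym's linkage theorem, to tame the infinite alternating components.
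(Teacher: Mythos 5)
The statement you were addressing is \autoref{con:legal_I3}, which the paper leaves open (it is listed again among the open problems in \autoref{sec:problems}); there is no proof in the paper to compare against, and your text does not in fact close the conjecture. Judged as a proof attempt it has a genuine gap. The setup is sound: taking $m$ an $m_0$-matching of the non-maximal set $I$ and $m'$ an $m_0$-matching of the maximal set $I'$ onto $W$ (via \autoref{thm:extendingToMaximal}), the component analysis of $m\,\triangle\,m'$ and the disposal of case (i) are correct --- in particular your observation that a finite symmetric difference cannot create an infinite component of $m_0\cup(m\,\triangle\,E(P))$ is right, since each new component decomposes into finitely many pieces lying in finite components of $m_0\cup m$. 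But cases (ii) and (iii) are exactly where the conjecture lives, and neither is resolved. In case (ii), flipping along the path from $x$ to $y\in I\setminus I'$ yields a matching of $I+x-y$, not of $I+x$, so one must splice in an augmenting structure for $m$ at $y$ or at $z$; the proposed ``overlay and re-route'' through $m\,\triangle\,m_z$ is not a construction: the union of the two symmetric differences has vertices of degree up to three, there is no canonical alternating walk to follow, and no argument that such a walk terminates, that it is finite, or that the final flip keeps every component of its union with $m_0$ finite. Case (iii) is conceded outright, and it is precisely the configuration (a ray in $m\,\triangle\,m'$, dually an alternating comb) that obstructs all the finite arguments.

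The fallback is circular rather than reductive: establishing (I3) when $(G,m_0)^\star$ is $C^A$-free is exactly the content of \autoref{thm:legalDuality}(ii), where $M_{PT}(G,m_0)$ is shown to be a matroid, so nothing new is obtained there; and the ``reduction of the general case to this one'' via shifting or Pym's theorem is only named, not performed. Note also that the shifting machinery of \autoref{sec:minor} is developed for dimazes that already define matroids and for $C^O$-free presentations, so it does not obviously apply to an arbitrary bimaze whose converted dimaze may fail $(\dagger)$. In short: correct framework and correct easy case, but the two hard cases that make this a conjecture rather than a theorem remain open in your write-up just as they do in the paper.
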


\begin{figure}
 \centering
 \includegraphics[width = 8cm]{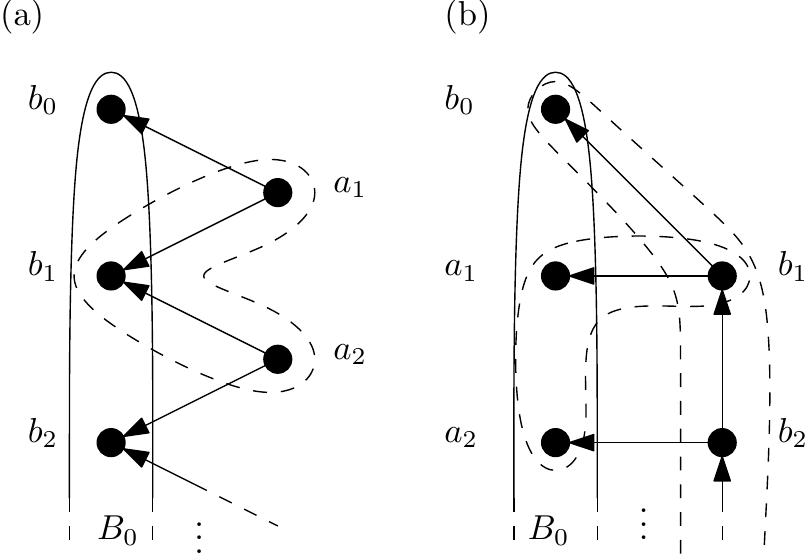}
 \caption{An alternating ray and an isomorphic incoming comb}
 \label{fig:ARIC}
 \end{figure}

\subsection{Finitary transversal matroids}\label{sec:NotCoStrictGammoid}
Our aim in this section is to give a transversal matroid that is not dual to any strict gammoid. To this end, we extend some results in \cite{Bon72} and \cite{BW71}. The following identifies edges that may be added to a presentation of a finitary transversal matroid without changing the matroid. 
\begin{lem}\label{thm:finite-transversal-non-edge}
Suppose that $M_T(G)$ is finitary. Let $K$ be a subset of $\{vw\notin E(G) : v\in V, w\in W\}$. Then the following are equivalent:
\begin{enumerate}
\item $M_T(G)\neq M_T(G+K)$;
\item there are $vw \in K$ and a circuit $C$ with $v\in C$ and $w\notin N(C)$;
\item there is $vw\in K$ such that $v$ is not a coloop of $M_T(G)\- N(w)$.
\end{enumerate} 
\end{lem}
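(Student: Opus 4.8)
The plan is to prove the cycle of implications $1 \Rightarrow 2 \Rightarrow 3 \Rightarrow 1$, exploiting the finitarity of $M_T(G)$ throughout so that all circuits in play may be taken finite, and using the standard description of fundamental circuits in transversal matroids via alternating paths.

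\textbf{$1 \Rightarrow 2$.} Suppose $M_T(G) \neq M_T(G+K)$. Then some set is matchable in $G+K$ but not in $G$; since $M_T(G)$ is finitary (and $M_T(G+K) \supseteq M_T(G)$ only adds matchings using edges of $K$), we may pass to a finite witness: there is a finite set $A$ and a matching $m$ of $A$ in $G+K$ using at least one edge of $K$, with $A \notin M_T(G)$. Among all such finite $A$, pick one of minimum size; then $A$ is a circuit of $M_T(G)$ (every proper subset is matchable in $G$; and $A$ itself is dependent in $G$ — otherwise, by matroid exchange arguments in the finite restriction, one could reduce). Let $vw \in K$ be an edge of $m$ with $v \in A$. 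I claim $w \notin N(C)$ for $C := A$: the matching $m$ matches every vertex of $C - v$ to $W$-vertices other than $w$ (as $m$ matches $v$ to $w$), and by minimality/circuit structure these $|C|-1$ vertices are exactly the deficient set, so $N(C - v)$ has size $|C| - 1$ and avoids $w$; since $N(C) = N(C-v) \cup N(v)$ and $vw \notin E(G)$, we get $w \notin N(C)$. (Here I use the standard fact that for a circuit $C$ of a transversal matroid, $C - v$ is matchable onto a set of size $|C|-1$ that does not depend on $v$, and $N(C-v)$ equals that set.)

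\textbf{$2 \Rightarrow 3$.} Given $vw \in K$ and a circuit $C$ with $v \in C$, $w \notin N(C)$: then $C \subseteq V \setminus N(w)$ essentially — more precisely, no vertex of $C$ is adjacent to $w$ except possibly... no: $w \notin N(C)$ means exactly that no vertex of $C$ is adjacent to $w$, so $C$ is a circuit of the restriction $M_T(G) \setminus N(w)$ wait — we delete $N(w)$ from $V$? Here $N(w) \subseteq V$ is the set of $V$-neighbours of $w$, and $w \notin N(C)$ (reading $N(C) \subseteq W$) says $C \cap N(w) = \emptyset$. Since $C$ is a circuit disjoint from $N(w)$, $C$ survives as a circuit in $M_T(G) \setminus N(w)$, so $v$ lies in a circuit of $M_T(G) \setminus N(w)$ and hence is not a coloop there.

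\textbf{$3 \Rightarrow 1$.} Suppose $vw \in K$ and $v$ is not a coloop of $M' := M_T(G) \setminus N(w)$. Then $v$ lies in some circuit $C$ of $M'$; by finitarity $C$ may be taken finite. Since $C$ avoids $N(w)$, we have $w \notin N(C)$. Now in $G + vw$, consider a matching $m_0$ witnessing that $C - v$ is independent, with $m_0$ matching $C - v$ onto $N(C-v) = N(C)$ (the fundamental-circuit description again, valid since $C$ is a circuit of $M_T(G)$: here I must check $C$ is indeed a circuit of $M_T(G)$, which it is, being a circuit of a restriction that happens not to touch $N(w)$). Then $m_0 + vw$ matches all of $C$ in $G + vw$, but $C$ is dependent in $M_T(G)$; hence $M_T(G) \neq M_T(G+K)$.

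\textbf{Main obstacle.} The delicate point is $1 \Rightarrow 2$: extracting a \emph{finite} circuit as the witness and verifying that the $K$-edge used can be chosen incident to a vertex of that circuit with the other endpoint outside the circuit's neighbourhood. This requires the precise structural fact about circuits of (finitary) transversal matroids — that a circuit $C$ has $|N(C)| = |C| - 1$ and $N(C-v) = N(C)$ for every $v \in C$ — together with a minimality argument to ensure the matching in $G+K$ that fails in $G$ can be localized to such a circuit. I would state this circuit-structure fact explicitly (it is folklore, cf. the references \cite{Bon72}, \cite{BW71} cited just before), and lean on finitarity to reduce everything to the finite case where these facts are classical. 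The implications $2 \Rightarrow 3$ and $3 \Rightarrow 1$ are then essentially bookkeeping about restrictions and fundamental circuits.
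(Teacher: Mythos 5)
Your proposal is correct and follows essentially the same route as the paper: finitariness reduces everything to finite circuits, the equivalence of 1 and 2 rests on the structure of a finite transversal circuit $C$ (namely $|N(C)|=|C|-1$ and $N(C-v)=N(C)$), and the equivalence of 2 and 3 is just the observation that the circuits of $M_T(G)\setminus N(w)$ are the circuits of $M_T(G)$ avoiding $N(w)$. One small repair is needed in your $1\Rightarrow 2$: you fix an \emph{arbitrary} $K$-edge $vw$ of the matching $m$ and assert $w\notin N(C)$, which can fail for a badly chosen edge (some other vertex of $C$ may be $G$-adjacent to $w$); instead, since $m$ matches $C$ onto $|C|>|N_G(C)|$ vertices of $W$, some $v_1\in C$ is matched to a $w_1\notin N_G(C)$, and that edge is then automatically in $K$ --- take $vw:=v_1w_1$.
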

\begin{proof}
1.~holds if and only if there is a circuit $C$ in $M_T(G)$ which is matchable in $G+K$. This, since $C$ is finite, in turn holds if and only if there is $v\in C$ that can be matched outside $N(C)$ in $G+K$, i.e.~2. holds. 

The equivalence between 2.~and 3.~is clear since a vertex is not a coloop if and only if it lies in a circuit.
\end{proof} 

Given a bipartite graph $G$, recall that a presentation of a transversal matroid $M$ as $M_T(G)$ is maximal if $M_T(G+vw)\neq M_T(G)$ for any $vw\notin E(G)$ with $v\in V, w\in W$. Thus, the previous lemma implies that if $M_T(G)$ is finitary, then $G$ is maximal if and only if $M\- N(w)$ is coloop-free for any $w\in W$.
Bondy \cite{Bon72} asserted that there is a unique maximal presentation for any finite coloop-free transversal matroid; where two presentations of a transversal matroid by bipartite graphs $G$ and $H$ are \emph{isomorphic} if there is a graph isomorphism from $G$ to $H$ fixing the left vertex class pointwise. 
 
\begin{pro}
\label{thm:transversalMax}
Every finitary transversal matroid $M$ has a
\! unique maximal presentation. 
\end{pro}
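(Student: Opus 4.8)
The plan is to prove existence and uniqueness separately, leaning on \autoref{thm:finite-transversal-non-edge} throughout.

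\emph{Existence.} Fix any presentation $G=(V,W)$ of $M$ respecting the standing convention that $W$ is covered by a matching, and let $K$ be the set of all non-edges $vw$ (with $v\in V$, $w\in W$) such that $M_T(G+vw)=M$. I claim $M_T(G+K)=M$, so that $G+K$ is again a presentation of $M$. Indeed, apply \autoref{thm:finite-transversal-non-edge} to the whole set $K$ (legitimate since $M_T(G)=M$ is finitary): if $M_T(G+K)\neq M$ then there are $vw\in K$ and a circuit $C$ of $M$ with $v\in C$ and $w\notin N_G(C)$; but applying the same lemma to the singleton $\{vw\}$, and using $M_T(G+vw)=M$, shows that no such circuit can exist — a contradiction. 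Moreover $G+K$ is maximal: for any non-edge $vw$ of $G+K$ we have $vw\notin K$, hence $M_T(G+vw)\neq M$, and therefore $\Ical(M)\subsetneq\Ical(M_T(G+vw))\subseteq\Ical(M_T((G+K)+vw))$ by monotonicity of $M_T$ in the edge set; so adding $vw$ to $G+K$ still changes the matroid. Adding edges only enlarges the set of matchings, so $G+K$ inherits the convention that $W$ is covered by a matching.

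\emph{Uniqueness: reduction to an invariant.} For a presentation $G=(V,W)$ write $\Ncal(G)$ for the family $\bigl(N_G(w)\bigr)_{w\in W}$ of right-vertex neighbourhoods, regarded up to relabelling of $W$, i.e.\ as a multiset of subsets of $V$. If $G=(V,W)$ and $H=(V,W')$ are maximal presentations of $M$ with $\Ncal(G)=\Ncal(H)$, then any bijection $\varphi\colon W\to W'$ with $N_G(w)=N_H(\varphi(w))$ for every $w$, extended by the identity on $V$, is a graph isomorphism $G\to H$ fixing the left vertex class pointwise. So it suffices to show that $\Ncal(G)$ depends only on $M$ whenever $G$ is maximal.

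\emph{Uniqueness: neighbourhoods are cyclic flats.} I would first observe that in \emph{any} presentation $G$, each set $Z_w:=V\setminus N_G(w)$ is a flat of $M$. Indeed, if $v\in\mathrm{cl}(Z_w)\setminus Z_w$, let $C$ be the fundamental circuit of $v$ over a maximal independent subset of $Z_w$, so $v\in C$ and $C-v\subseteq Z_w$. Then $C-v$ is independent, hence matchable by some matching $m$; since no vertex of $C-v$ is adjacent to $w$, the vertex $w$ is unused by $m$, so $m\cup\{vw\}$ is a matching of $C$, contradicting that $C$ is a circuit. When $G$ is maximal, the remark following \autoref{thm:finite-transversal-non-edge} tells us in addition that $M|_{Z_w}$ has no coloop, so each $Z_w$ is a \emph{cyclic flat} of $M$. (A coloop of $M$ would be a coloop of any $M|_{Z_w}$ containing it, so by maximality it belongs to every $N_G(w)$ and to no $Z_w$, which is consistent.) It remains to determine, intrinsically from $M$, which cyclic flats $Z\neq V$ arise as some $Z_w$ and with what multiplicity; extending the analysis of the finite case in \cite{Bon72}, one expects the number of $w$ with $Z_w=Z$ to be a cardinal determined by the lattice of cyclic flats of $M$, so that $\Ncal(G)$ is indeed a matroid invariant and the proof concludes.

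\emph{The main obstacle} is exactly this last step: proving that the neighbourhood-complements of a maximal presentation are precisely the cyclic flats occurring with positive multiplicity, and that those multiplicities are functions of $M$ alone, thereby carrying Bondy's finite bookkeeping over to the finitary setting. Finitarity of $M$ (all circuits, hence all relevant cyclic flats, are finite) is what should make it possible to reduce this count to finite subconfigurations; the standing convention that $W$ is covered by a matching is what keeps $\Ncal(G)$ well-defined even in the presence of coloops or infinite rank — without it, a free matroid on a countably infinite set would admit non-isomorphic maximal presentations with right classes of different cardinalities — and the (harmless) coloops of $M$ must be tracked carefully through the reduction.
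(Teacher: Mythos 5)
Your existence argument is sound and is essentially the paper's: adding every non-edge $vw$ for which no circuit $C$ has $v\in C$ and $w\notin N(C)$ yields a maximal presentation by \autoref{thm:finite-transversal-non-edge}. Your reduction of uniqueness to the invariance of the multiset $\Ncal(G)=(N_G(w))_{w\in W}$ is also the right framing, and your observations that each $V\setminus N_G(w)$ is a flat, and a cyclic flat when $G$ is maximal, are correct. But the step you flag as ``the main obstacle'' is not a technical loose end --- it is the entire content of the uniqueness half of the proposition, and your proposal does not prove it. Asserting that ``one expects'' the multiplicity of each cyclic flat among the $Z_w$ to be a cardinal determined by $M$ is precisely the statement to be established; without it nothing rules out two maximal presentations in which some neighbourhood $A$ occurs with different multiplicities. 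Moreover, it is not even clear that the intrinsic characterization you aim for (which cyclic flats occur, and with what multiplicity, read off from the lattice of cyclic flats) is the most tractable route: the paper never formulates the multiplicities intrinsically at all.

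Instead, the paper compares two maximal presentations $G$ and $H$ directly. First it shows (Claim~1) that for every finite $F\subseteq V$ the induced subgraphs $G[F\cup N_G(F)]$ and $H[F\cup N_H(F)]$ are isomorphic fixing $F$ pointwise: one collects, for each $v\in F$ a circuit through $v$ and for each non-edge between $F$ and $N_G(F)$ a certifying circuit as in \autoref{thm:finite-transversal-non-edge}, obtains a finite set $F'$, extends the induced presentations of $M|F'$ to maximal ones, and invokes Bondy's uniqueness theorem for finite coloop-free transversal matroids. If $\Ncal(G)\neq\Ncal(H)$, some $A\subseteq V$ occurs as a neighbourhood with multiplicities $g<h$ in $G$ and $H$ respectively; the paper then passes to $M.A$, presented in each graph by the subgraph on $A\cup\{w: N(w)\subseteq A\}$ (using \autoref{thm:coversRHS} to see these subgraphs do present $M.A$ and are again maximal), and uses a compactness claim (Claim~2: if every finite subfamily of a family of finite subsets of $W$ has intersection of size at least $k$, so does the whole family) to produce a finite $F\subseteq A$ whose common neighbourhood in the $G$-side subgraph has exactly $g$ elements --- contradicting Claim~1, which forces at least $h>g$ common neighbours on the $H$-side. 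You would need to supply arguments of this kind (or a genuine intrinsic computation of the multiplicities, which is harder) before your proof is complete.
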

\begin{proof}
Let $M=M_T(G)$. 
Adding all $vw$ with the property that there is not any circuit $C$ with $v\in C$ and $w\notin N(C)$ gives a maximal presentation of $M$ by \autoref{thm:finite-transversal-non-edge}. In particular, any coloop is always adjacent to every vertex in $W$. So without loss of generality, we assume that $M$ is coloop-free. 

Now let $G$ and $H$ be distinct maximal presentations of $M$.

{\bf Claim 1.} For any finite subset $F$ of $V$, the induced subgraphs $G[F\cup N_G(F)]$ and $H[F\cup N_H(F)]$ are isomorphic.

For every $v\in F$ pick a circuit $C_v$ with $v\in C_v$. By \autoref{thm:finite-transversal-non-edge}, for every $vw\in \{ xy\notin E(G) : x\in F, y\in N_G(F)\}$, there is a circuit $C_{vw}$ with $v\in C$ and $w\notin N_G(C)$. Let $F_G$ be the union of all $C_v$'s and $C_{vw}$'s. Analogously define $F_H$ and let $F'=F_G\cup F_H$. Extend the presentations $G[F'\cup N_G(F')]$ and $H[F'\cup N_H(F')]$ of $M|F'$ to maximal ones $G'$ and $H'$ respectively, between which there is a graph isomorphism fixing the left vertex class pointwise by Bondy's result. Restricting the isomorphism to $F\cup N_G(F)$ is an isomorphism of $G[F\cup N_G(F)]$ and $H[F\cup N_H(F)]$, as by definition of $F'$ and \autoref{thm:finite-transversal-non-edge}, no non-edge between $F$ and $N_G(F)$ is an edge in $G'$ (analogously between $F$ and $N_H(F)$ in $H'$).

\medskip
Without loss of generality, there is an $A\subseteq V$ such that $g:=|\{w\in W(G): N_G(w)=A\}| < |\{w\in W(H): N_H(w) = A\}|=:h$. Note that as $H$ is a maximal presentation, by \autoref{thm:finite-transversal-non-edge}, $M\-A$ is coloop-free.

As $M$ is coloop-free, so is $M.A$. 
Let $B_1$ be a base of $M.A$ and extend $B_1$ to a base of $M$ which admits a matching $m$; thus $m$ contains a matching of a base of $M\- A$. 
Since $M\- A$ is coloop-free, by \autoref{thm:coversRHS}, the neighbourhood of each vertex matched by $m$ to a vertex in $B_1$ is a subset of $A$. 
Thus, $M.A$ can be presented with the subgraphs induced by $A\cup \{w\in W: N(w)\subseteq A\}$ in both graphs; 
call these subgraphs $G_1$ and $H_1$. For any $w\in W(G_1)$, since $M\- N_{G_1}(w)$ is coloop-free, so is $M.A\- N_{G_1}(w)$.  By \autoref{thm:finite-transversal-non-edge}, $G_1$ (analogously $H_1$) is a maximal presentation of $M.A$.

{\bf Claim 2.} Given a family $(N_j)_{j\in J}$ of finite subsets of $W$, if the intersection of any finite subfamily has size at least $k$, then the intersection of the family has size at least $k$. 

Let $N = \bigcap_{j\in J} N_j$. Suppose $|N| < k$. Fix some $j_0\in J$ and for each element $y\in N_{j_0}\- N$ pick some $N_y$ such that $y\notin N_y$. Then $|N_{j_0} \cap \bigcap_{y\in N_{j_0} \- N} N_y|= |N| < k$, which is a contradiction. 

\medskip
By Claim 2, there is a finite set $F\subseteq A$ such that $|\bigcap_{v\in F} N_{G_1}(v)|=g$. But Claim 1 says that $F$ has at least $h>g$ common neighbours in $H_1$; this contradiction completes the proof. 
\end{proof}

In \cite{ALM14}, it is proved that a cofinitary strict gammoid always admits a presentation that is $\{R^I, C^A\}$-free. 
To show that the following finitary transversal matroid is not dual to a strict gammoid, it suffices to show that there is no bimaze presentation whose converted dimaze is $C^A$-free. 

\begin{exm}
\label{thm:transversalNotDSG}
Define a bipartite graph $G$ as $V(G) =\{v_i, A_i: i\geq 1\}$ and $E(G) = \{ v_1A_1, v_2A_1, v_1A_3, v_2A_3\} \cup \{v_{2i-3}A_i, v_{2i-2}A_i, v_{2i-1}A_i, v_{2i}A_i: i\geq 2\}$. Then $M=M_T(G)$ is not dual to a strict gammoid. 
\end{exm}
\begin{proof}
As $G$ is left locally finite, $M$ is a finitary matroid. Assume for a contradiction that $M^* = M_L(D, B_0)$. By a characterization of cofinitary strict gammoids in \cite{ALM14}, we may assume that $(D, B_0)$ is $\{R^I, C^A\}$-free. Then by \autoref{thm:ARIRduality}, $M = M_T(D, B_0)^\star$. 

Now it can be checked that all $M\- N(w_i)$ are coloop-free. 
By \autoref{thm:finite-transversal-non-edge}, $G$ is the maximal presentation of $M$. The same lemma also implies that any minimal presentation $G'$ is obtained by deleting edges from $\{v_1A_3, v_2A_3\}$ and at most one from $\{v_1A_2, v_2A_2\}$. In particular, all presentations of $M$ differ from $G$ only finitely. It is not difficult to check that with any matching $m_0$ of a base, $(G, m_0)^\star$ contains a subdivision of $C^A$. Hence, there is no bimaze presentation of $M$ such that the converted dimaze is $C^A$-free, contradicting that $(D, B_0)^\star$ is such a presentation. 
\end{proof}

We remark that the above transversal matroid is dual to a gammoid, see \autoref{fig:TMnotDualTo_SG}. However, in the next section, we give a transversal matroid that is not dual to any gammoid. 

\begin{figure}
\centering
 \includegraphics[scale=0.9]{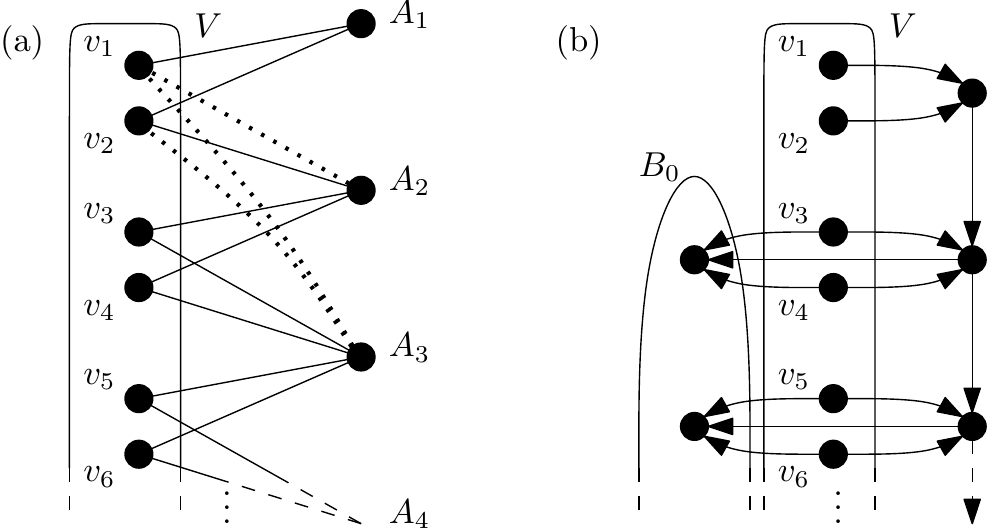}
 \caption{A transversal matroid which is not dual to a strict gammoid and a gammoid presentation of its dual}
 \label{fig:TMnotDualTo_SG}
\end{figure}

\subsection{Infinite tree and gammoid duality}\label{sec:discussionDuality}
To show that there is a strict gammoid not dual to a gammoid, we prove the following lemmas, whose common setting is that a given dimaze $(D, B_0)$ defines a matroid $M_L(D, B_0)$. For a linkage $\Qcal$ and any $X\subseteq \Ini(\Qcal)$, $\Qcal \upharpoonright X:=\{Q\in \Qcal : \Ini(Q)\in X \}$; when $X=\{x\}$, we write simply $Q_x$. 

\begin{lem}\label{thm:circuitAWexistence}
 Let $b$ be an element in an infinite circuit $C$, $\Qcal$ a linkage from $C-b$. Then $b$ can reach infinitely many vertices in $C$ via $\Qcal$-alternating walks. 
\end{lem}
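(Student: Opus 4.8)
The plan is to prove the stronger statement that $b$ reaches \emph{every} vertex of $C-b$ via a $\Qcal$-alternating walk; since $C$ is infinite, $C-b$ is infinite, so this is enough. Fix $c\in C-b$. As $C$ is a circuit, its proper subset $C-c$ is independent, hence linkable; pick a linkage $\Pcal_c$ with $\ini(\Pcal_c)=C-c$. The idea is to compare $\Pcal_c$ with the given linkage $\Qcal$ (which has $\ini(\Qcal)=C-b$) by means of maximal $\Pcal_c$-$\Qcal$-alternating walks, and to read off from the one starting at $b$ a $\Qcal$-alternating walk from $b$ to $c$.

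Two facts are immediate from $C$ being dependent, hence not linkable in $(D,B_0)$. First, there is no $\Qcal$-alternating walk from $b$ to $B_0\setminus\ter(\Qcal)$: by \autoref{thm:AW2Linkage}(i) with $X:=C$ (so $\ini(\Qcal)=C-b\subseteq C$ and $C\setminus\ini(\Qcal)=\{b\}$), such a walk would yield a linkage whose source set lies strictly between $C-b$ and $C$, that is, a linkage of $C$. Second, if $b\in B_0$ then $b\in\ter(\Qcal)$: otherwise $b$ is a sink lying on no path of $\Qcal$, so $\Qcal$ together with the trivial path at $b$ links $C$.

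Now apply \autoref{thm:maximalPQAWsDisjoint} to $\Pcal_c$ and $\Qcal$ (so there $S=\ini(\Qcal)=C-b$ and $T=\ter(\Qcal)$): the maximal $\Pcal_c$-$\Qcal$-alternating walks $W_v$ based at the distinct vertices $v\in\ini(\Pcal_c)=C-c$ are pairwise disjoint, and each $W_v$ ends in $(\ter(\Pcal_c)\setminus\ter(\Qcal))\cup(C-b)$. For $v\in C-b-c$ the walk $W_v$ is trivial: it cannot leave $v$, because the one $\Qcal$-edge at $v=\ini(Q_v)$ is oriented out of $v$, while by (W1) and (W3) the first edge of any nontrivial $\Qcal$-alternating walk based at a vertex of $V(\Qcal)$ is a $\Qcal$-edge oriented into that vertex; so $\ter(W_v)=v$. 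Next consider $W_b$, using $b\in\ini(\Pcal_c)$. Since $\ter(\Pcal_c)\subseteq B_0$ and, by the second fact above, $b\in B_0$ forces $b\in\ter(\Qcal)$, we have $b\notin\ter(\Pcal_c)\setminus\ter(\Qcal)$; also $b\notin C-b$; hence $b$ is not in the set $(\ter(\Pcal_c)\setminus\ter(\Qcal))\cup(C-b)$, so $W_b$ is nontrivial. As a $\Pcal_c$-$\Qcal$-alternating walk, $W_b$ is in particular a $\Qcal$-alternating walk, so the first fact above rules out $\ter(W_b)\in\ter(\Pcal_c)\setminus\ter(\Qcal)\subseteq B_0\setminus\ter(\Qcal)$. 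Therefore $\ter(W_b)=:t$ lies in $C-b$. Finally, disjointness of the family $\{W_v:v\in C-c\}$ makes the terminal vertices pairwise distinct, so $t\neq\ter(W_v)=v$ for all $v\in C-b-c$; with $t\in C-b$ this forces $t=c$. Thus $W_b$ is a $\Qcal$-alternating walk from $b$ to $c$, completing the argument.

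The step that needs care is the control of $W_b$: that a maximal $\Pcal_c$-$\Qcal$-alternating walk starting at $b$ is a well-defined finite walk whose terminus lies in $(\ter(\Pcal_c)\setminus\ter(\Qcal))\cup(C-b)$. This is precisely the content of \autoref{thm:maximalPQAWsDisjoint} (resting on the fact that such walks continue in a forced way), and once it is invoked the rest is the bookkeeping above — in particular the small case distinction about whether $b$ lies in $B_0$ or in $\ter(\Qcal)$, each possibility being excluded or absorbed purely by using that $C$ is a circuit and hence not linkable.
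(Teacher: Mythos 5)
Your strategy coincides with the paper's: fix a target $c\in C-b$, link $C-c$ by a linkage $\Pcal_c$, and examine the maximal $\Pcal_c$-$\Qcal$-alternating walk $W_b$ starting at $b$. Your bookkeeping for the case that $W_b$ is finite is correct, and in places more explicit than the paper's (the two preliminary facts, the triviality of $W_v$ for $v\in C-b-c$, and the use of disjointness to force $\ter(W_b)=c$ are all sound). The gap is at exactly the step you single out as needing care: \autoref{thm:maximalPQAWsDisjoint} does \emph{not} say that $W_b$ is finite. It says that the maximal walks are disjoint and \emph{can only end in} $(\ter(\Pcal)\setminus T)\cup S$, which constrains the terminus only if there is one; its proof shows nothing about termination. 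Infinite maximal $\Pcal$-$\Qcal$-alternating walks are a genuine possibility --- this is precisely why the proofs of \autoref{thm:linkableInDB02linkableInShifted_noOC} and \autoref{thm:topologicalGammoidIsGammoid} must treat the infinite walk as a separate case and exclude it using $C^O$-freeness or maximality of a base --- and no such hypothesis is available in the present lemma. If $W_b$ is infinite, your argument produces no walk to $c$, so neither your strengthened claim (that $b$ reaches \emph{every} vertex of $C-b$) nor, as written, the lemma itself is established.

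The paper's own proof disposes of this case separately (``if $W$ is infinite, then we are done''), and some argument is genuinely needed there. One way to see it: since all paths in $\Pcal_c$ and $\Qcal$ are finite, an infinite $W_b$ cannot have an infinite forward run, so it uses infinitely many edges of $E(\Qcal)$ and hence meets infinitely many paths $Q\in\Qcal$ in an edge. For each such $Q$, take the edge $e$ of $E(W_b)\cap E(Q)$ nearest to $\ini(Q)$ along $Q$, truncate $W_b$ just after it traverses $e$ (i.e., at the tail $u$ of $e$), and append the backward traversal of $\ini(Q)Qu$; by the choice of $e$ the appended edges do not occur in the truncated walk, and the result is a $\Qcal$-alternating walk from $b$ to $\ini(Q)\in C-b$. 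This already yields infinitely many reachable vertices of $C$, which is all the lemma asserts. To repair your write-up, either add this case (and drop the ``every vertex of $C-b$'' claim, which the infinite case does not deliver) or supply a reason why $W_b$ must terminate; I do not see one.
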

\begin{proof}
Given any $x\in C-b$, let $\Pcal$ be a linkage of $C-x$. Let $W$ be a maximal $\Pcal$-$\Qcal$-alternating walk starting from $b$. If $W$ is infinite, then we are done. Otherwise, $W$ ends in either $\Ter(\Pcal) \- \Ter(\Qcal)$ or $\Ini(\Qcal) \- \Ini(\Pcal) = \{x\}$. The former case does not occur, since it gives rise to a linkage of $C$ by \autoref{thm:AW2Linkage}(i), contradicting $C$ being a circuit. As $x$ was arbitrary, the proof is complete. 
\end{proof}

\begin{lem}\label{thm:AWs-NotMeetTwo}
For $i=1, 2$, let $C_i$ be a circuit of $M$, $x_i, b_i$ distinct elements in $C_i \- C_{3-i}$. 
Suppose that $(C_1\cup C_2)\-\{b_1,b_2\}$ admits a linkage $\Qcal$. Then any two $\Qcal$-alternating walks $W_i$ from $b_i$ to $x_i$, for $i=1, 2$, are disjoint.
\end{lem}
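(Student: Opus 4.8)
The plan is to transport the two alternating walks to the $\Qcal$-shifted dimaze, where they become ordinary paths and can be spliced without fuss. Write $S := (C_1\cup C_2)\setminus\{b_1,b_2\} = \Ini(\Qcal)$ and $T := \ter(\Qcal)$, and let $(D_1,B_1)$ with $B_1 = (B_0\setminus T)\cup S$ be the $\Qcal$-shifted dimaze. I would first observe that $\ter(W_1)=x_1\neq x_2=\ter(W_2)$ holds automatically, since $x_1\in C_1\setminus C_2$ and $x_2\in C_2\setminus C_1$; thus a failure of disjointness of $W_1,W_2$ can only be a shared edge or a shared vertex outside $V(\Qcal)$. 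By \autoref{thm:AW-path_shift}, each $P_i:=\vec{\Qcal}(W_i)$ is a finite path from $b_i$ to $x_i\in B_1$ in $D_1$, nontrivial as $b_i\neq x_i$; and since $\cev{\Qcal}(\vec{\Qcal}(W))=W$ and \autoref{thm:AW-path_shift} preserves disjointness in both directions, $W_1,W_2$ are disjoint as $\Qcal$-alternating walks if and only if $P_1,P_2$ are vertex-disjoint paths.

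Assuming for contradiction that $P_1$ and $P_2$ share a vertex $v$, I would concatenate the segment of $P_1$ from $b_1$ to $v$ with the segment of $P_2$ from $v$ to $x_2$ to obtain a $b_1$--$x_2$ walk in $D_1$, and extract from it a $b_1$--$x_2$ path $P$ (nontrivial since $b_1\neq x_2$). Pulling back, $W:=\cev{\Qcal}(P)$ is a finite $\Qcal$-alternating walk from $b_1$ to $x_2$ in $(D,B_0)$ by \autoref{thm:AW-path_shift}. Now I would apply \autoref{thm:aws2paths} to the one-element family $\Wcal:=\{W\}$ with $J:=\{b_1\}$ (legitimate: $W$ starts at $b_1\notin S$ and ends at $x_2\in S\subseteq B_1$), obtaining disjoint paths or rays in $\Qcal\Delta\Wcal$ from $X:=\{b_1\}\cup(S\setminus\{x_2\})=(C_1\cup C_2)\setminus\{b_2,x_2\}$ to $Y:=T\cup(\{x_2\}\cap B_0)\subseteq B_0$. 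Since $E(W)$ is finite and every member of the linkage $\Qcal$ is finite, no ray can arise: a ray in $\Qcal\Delta\Wcal$ could pass between distinct members of $\Qcal$ only along the finitely many edges of $W$, so it would be confined to finitely many finite paths of $\Qcal$. Hence $X$ is linkable in $(D,B_0)$. But $b_2,x_2\notin C_1$, so $C_1\subseteq X$, making $X$ dependent -- the desired contradiction. Therefore $P_1,P_2$ are vertex-disjoint and $W_1,W_2$ are disjoint.

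The hard part is precisely the splicing of the two alternating walks: concatenating $W_1$ and $W_2$ directly in $(D,B_0)$ at a common vertex of $V(\Qcal)$ may violate the alternation rule \hyperref[aw3]{(W3)}, and routing through the shifted dimaze via \autoref{thm:AW-path_shift} is what makes the splice trivial. The price is checking carefully that \autoref{thm:AW-path_shift} really yields the claimed disjointness equivalence -- in particular that $\vec{\Qcal}$ fixes each $b_i$, which holds because the first edge of $W_i$ must lie in $E(\Qcal)$ whenever $b_i\in V(\Qcal)$, by \hyperref[aw3]{(W3)} with $e_{-1}:=e_0$. A direct alternative is to splice in $(D,B_0)$ with a brief case analysis on the junction vertex, producing either a $b_1$--$x_2$ walk or a $b_2$--$x_1$ walk and deriving, in the second case, the symmetric contradiction $C_2\subseteq\{b_2\}\cup(S\setminus\{x_1\})$.
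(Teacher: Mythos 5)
Your argument is correct, and it reaches the same pivotal object as the paper — a single $\Qcal$-alternating walk from $b_1$ to $x_2$ whose existence contradicts the dependence of $C_1$ — but by a genuinely different route on both sides of that pivot. The paper performs the splice directly in $(D,B_0)$: it takes the first vertex $v$ of $W_1$ witnessing the failure of disjointness, checks in a short case analysis (either $v\notin V(\Qcal)$, or $v\in V(\Qcal)$ with a shared $\Qcal$-edge) that $W_3:=W_1vW_2$ is again a $\Qcal$-alternating walk, then truncates $W_3$ at the first vertex of $V(\Qcal\upharpoonright (C_2-b_2)\setminus C_1)$ and follows that path of $\Qcal$ to its exit; this yields a $(\Qcal\upharpoonright(C_1-b_1))$-alternating walk from $b_1$ to an exit outside $\Ter(\Qcal\upharpoonright(C_1-b_1))$, and \autoref{thm:AW2Linkage}(i) links $C_1$ in one stroke, with no rays to worry about. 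You instead outsource the splice to the $\Qcal$-shifted dimaze via \autoref{thm:AW-path_shift} (where it is a triviality about paths) and outsource the augmentation to \autoref{thm:aws2paths}, which obliges you to rule out rays in $\Qcal\Delta\Wcal$; your finiteness argument for that ($E(W)$ finite, every member of the linkage $\Qcal$ finite) is sound, as are the supporting checks that $\vec{\Qcal}$ fixes $b_i$ by \hyperref[aw3]{(W3)} and that $C_1\subseteq(C_1\cup C_2)\setminus\{b_2,x_2\}$. The trade is roughly even: the paper's proof is shorter and stays inside the alternating-walk calculus, while yours avoids verifying \hyperref[aw1]{(W1)}--\hyperref[aw3]{(W3)} for the concatenation at the cost of the round trip through $\vec{\Qcal}$ and $\cev{\Qcal}$ and the no-ray argument. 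The ``direct alternative'' you sketch at the end is essentially the paper's actual proof.
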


\begin{proof}
Suppose that $W_1 = w^1_0 e^1_0 w^1_1 \ldots w^1_n$ and $W_2 = w^2_0 e^2_0 w^2_1 \ldots w^2_m$ are not disjoint. Then there exists a first vertex $v = w^1_j$ on $W_1$ such that $v = w^2_k \in W_2$ and either $v \in V(\Qcal)$ and $e^1_j = e^2_k \in E(\Qcal)$ or $v \notin V(\Qcal)$.
In both cases $W_3 := W_1 v W_2$ is a $\Qcal$-alternating walk from $b_1$ to $x_2$. 
Let $v'$ be the first vertex of $W_3$ in $V(\Qcal\upharpoonright (C_2 - b_2) \setminus C_1)$ and $Q$ the path in $\Qcal$ containing~$v'$.
Then $W_3 v' Q$ is a $(\Qcal\upharpoonright (C_1 - b_1))$-alternating walk from $b_1$ to $B_0\-\Ter(\Qcal\upharpoonright (C_1 - b_1))$, which by \autoref{thm:AW2Linkage}(i) contradicts the dependence of $C_1$. Hence $W_1$ and $W_2$ are disjoint.
\end{proof}

\begin{lem}\label{thm:AWs-NotMeetInf}
Let $\{C_i : i\in N\}$ be a set of circuits of $M$; $x_i, b_i$ distinct elements in $C_i\-\bigcup_{j\neq i} C_j$. 
Suppose that $\bigcup_{i\in N} C_i\-\{b_i : i\in N\}$ admits a linkage $\Qcal$. Let $W_i$ be a $\Qcal$-alternating walk from $b_i$ to $x_i$. If $X\subseteq V$ is a finite set containing $C_i\cap C_j$ for any distinct $i, j$, then only finitely many of $W_i$ meet $\Qcal\upharpoonright X$.
\end{lem}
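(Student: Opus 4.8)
The plan is to argue by contradiction: suppose infinitely many of the walks $W_i$ meet $\Qcal\upharpoonright X$. Since $X$ is finite, there is a single path $Q\in\Qcal\upharpoonright X$ that is met by infinitely many $W_i$, say $W_i$ for $i$ in an infinite index set $I_0$. Write $s:=\Ini(Q)$; note $s\in C_k-b_k$ for some fixed $k$ (since $Q\in\Qcal$ and $\Qcal$ is a linkage of $\bigcup C_i\setminus\{b_i\}$, the initial vertex $s$ lies in some $C_k$ and is not one of the $b_i$). The idea is that each $W_i$ ($i\in I_0$), upon first reaching $Q$, can be ``diverted'' along $Q$ to produce an alternating walk witnessing too much independence, and that these diversions can be combined disjointly to contradict the dependence of some fixed circuit.

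First I would set up the local picture along $Q$. For $i\in I_0$, let $v_i$ be the first vertex of $W_i$ lying on $Q$; by (W3) applied at $v_i\in V(\Qcal)$, the walk $W_i$ enters or leaves $v_i$ along the $Q$-edge, so the initial segment $b_iW_iv_i$ followed by the terminal segment $v_iQ\,\mathrm{(towards }\,\Ter(Q)\mathrm{)}$ or $\mathring v_iQ s$ (towards $s$) is again a $\Qcal$-alternating walk. The key combinatorial point: since $Q$ is a single path and the $v_i$ are (by the disjointness from \autoref{thm:AWs-NotMeetTwo}, applied pairwise using that $x_i,b_i\in C_i\setminus C_j$) distinct vertices of $Q$, they are linearly ordered along $Q$; moreover the portions of the $W_i$ before reaching $Q$ are pairwise disjoint off $V(\Qcal)$ by \autoref{thm:AWs-NotMeetTwo}. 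Pick $i_0\in I_0$ with $v_{i_0}$ closest to $\Ini(Q)=s$ along $Q$. Then for every other $i\in I_0$, the walk $W_i$ reaches $Q$ at a vertex $v_i$ strictly after $v_{i_0}$, and we may form the walk $W_i':=b_iW_iv_i\,Q^{-}\, \mathring v_{i_0}$, i.e.\ follow $W_i$ to $v_i$ then run backwards along $Q$ past $v_{i_0}$ — this is a $\Qcal$-alternating walk because traversing $Q$ against its orientation is exactly using $Q$-edges in the allowed direction, and property (W2) is respected since the only repeated vertices lie on $V(\Qcal)$.

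The heart of the argument is then to derive a contradiction with the dependence of $C_k$ (the circuit containing $s$). Consider the linkage $\Qcal\restric(C_k-b_k)$. I would show that, using the diverted walks together with the tail $\mathring v_{i_0}Q s$, one can build a $(\Qcal\restric(C_k-b_k))$-alternating walk from some $b_i$ (with $i\ne k$, chosen so that $C_i\cap C_k\subseteq X$) that ends in $B_0\setminus\Ter(\Qcal\restric(C_k-b_k))$ or reaches $s=\Ini(Q)$ and continues — exactly the configuration that \autoref{thm:AW2Linkage}(i) converts into a linkage of $C_k$, contradicting that $C_k$ is a circuit. The role of the finiteness and of $X\supseteq C_i\cap C_j$ is to guarantee that the diverted walk, once it leaves the vertices of $X$, stays inside the ``private'' part of $C_k$ so that it is genuinely a $\Qcal\restric(C_k-b_k)$-alternating walk (every intermediate $\Qcal$-vertex it meets lies on a path with initial vertex in $C_k-b_k$); this is the same localization trick used in the proof of \autoref{thm:AWs-NotMeetTwo}, where the walk was truncated at the first vertex of $V(\Qcal\restric(C_2-b_2)\setminus C_1)$.

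The main obstacle I anticipate is the bookkeeping in the last step: one must be careful that the concatenation $b_iW_iv_i$ then $Q$ then (possibly) a tail inside $C_k$ genuinely satisfies (W1)–(W3) at the splice points — in particular at $v_i$ and $v_{i_0}$ where an incoming $\Qcal$-edge meets the $Q$-segment, and that no vertex is repeated off $V(\Qcal)$. The choice of $i_0$ as the walk reaching $Q$ earliest is what makes this work, because it ensures the backward run along $Q$ from $v_i$ does not collide with the pre-$Q$ portion of $W_{i_0}$ except possibly on $V(\Qcal)$, which is permitted. Once a single such bad alternating walk for $C_k$ is produced, \autoref{thm:AW2Linkage}(i) finishes the contradiction, so in fact we only need \emph{one} diverted walk, not infinitely many — the infinitude is used solely to ensure that some $W_i$ with $i$ satisfying the privacy condition $C_i\cap C_k\subseteq X$ actually meets $Q$, which holds because all but finitely many indices satisfy that condition.
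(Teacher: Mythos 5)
Your first two steps --- invoking \autoref{thm:AWs-NotMeetTwo} to get that the $W_i$ are pairwise disjoint, and then pigeonholing to find a single path $Q\in\Qcal\upharpoonright X$ met by infinitely many $W_i$ --- are exactly where the paper's proof lives, and at that point you are already done: disjoint $\Qcal$-alternating walks are by definition edge-disjoint, and by \hyperref[aw3]{(W3)} any walk passing through a vertex $v\in V(Q)$ must use one of the at most two $\Qcal$-edges at $v$ (the only remaining way to ``meet'' $Q$ is to terminate on it, which happens for at most $|V(Q)|$ of the walks since disjoint walks have distinct terminal vertices). Since $Q$ is a finite path, only finitely many edge-disjoint walks can do this --- contradiction. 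The paper's entire proof is this two-line counting argument; no augmentation, no circuits, no appeal to \autoref{thm:AW2Linkage} is needed.

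The elaborate diversion argument you build instead contains a genuine gap at its crucial final step. You propose to produce a $\bigl(\Qcal\upharpoonright(C_k-b_k)\bigr)$-alternating walk starting at some $b_i$ with $i\neq k$ and to conclude via \autoref{thm:AW2Linkage}(i) that $C_k$ is linkable. But \autoref{thm:AW2Linkage}(i) applied to the linkage $\Qcal\upharpoonright(C_k-b_k)$ and a walk starting at $b_i$ would only yield a linkage of $(C_k-b_k)+b_i$, which is not $C_k$ and gives no contradiction; to contradict the dependence of $C_k$ the walk must start at $b_k$ itself, and to contradict the dependence of $C_i$ it must be a $\bigl(\Qcal\upharpoonright(C_i-b_i)\bigr)$-alternating walk ending in $B_0\setminus\Ter\bigl(\Qcal\upharpoonright(C_i-b_i)\bigr)$ --- which is precisely the configuration already exploited in \autoref{thm:AWs-NotMeetTwo}, not the one you construct. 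Moreover, reaching $s=\Ini(Q)$ is not an augmenting configuration at all: $s$ is a \emph{source} of the linkage, whereas \autoref{thm:AW2Linkage}(i) requires the walk to reach an exit outside the terminals of the linkage; your phrase ``reaches $s=\Ini(Q)$ and continues'' never specifies such an endpoint. (A smaller inaccuracy: the first meeting vertices $v_i$ need not be distinct, since disjoint walks may share vertices of $V(\Qcal)$; each such vertex is shared by at most two walks, so this is repairable, but it is not given by disjointness alone as you claim.)
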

\begin{proof}
By \autoref{thm:AWs-NotMeetTwo}, the walks $W_i$ are pairwise disjoint. Since $\Qcal\upharpoonright X$ is finite, it can be met by only finitely many $W_i$'s. 
\end{proof} 

We are now ready to give a counterexample to classical duality in gammoids.

\begin{exm}\label{thm:InfiniteTreeNotCogammoid}
Let $(T,B_0)$ be the dimaze defined in \autoref{thm:infinitetree-legal}.
The dual of the strict gammoid $M=M_L(T,B_0)$ is not a gammoid.
\end{exm}
\begin{proof}
Suppose that $M^* = M_L(D,B_1)\upharpoonright V$, where $V:=V(T)$. Fix a linkage $\Qcal$ of $V\- B_0$ in $(D, B_1)$. 
For $b\in B_0$, let $C_b$ be the fundamental cocircuit of $M$ with respect to $B_0$. 
Then for any (undirected) ray $b_0 x_0 b_1 x_1 \cdots $in $T$, $C:=\bigcup_{k\in \mathbb N} C_{b_k} \- \{x_k: k\in\mathbb N\}$ is a cocircuit of $M$. We get a contradiction by building a linkage for $C$ in $(D, B_1)$ inductively using disjoint $\Qcal$-alternating walks. 

Let $b_0$ be the root of $T$. By \autoref{thm:circuitAWexistence}, there is a $\Qcal$-alternating walk $W_0$ from $b_0$ to one of its children $x_0$. 
At step $k>0$, from each child $b$ of $x_{k-1}$ in $T$, by \autoref{thm:circuitAWexistence}, there is a $\Qcal$-alternating walk $W_b$ in $(D,B_1)$ to a child $x$ of $b$. 
Applying \autoref{thm:AWs-NotMeetInf} on $\{C_i: i\in N^-(x_{k-1})-b_{k-1}\}$ with $X = \{x_{k-1}\}$, we may choose $b_k:=b$, $x_k:=x$ such that $W_k:= W_b$ avoids $Q_{x_{k-1}}$. 

By \autoref{thm:AWs-NotMeetTwo}, distinct $W_k$ and $W_{k'}$ are disjoint. 
Moreover, as each $W_k$ avoids $Q_{x_{k-1}}$, \autoref{thm:AW2Linkage}(i) implies that $W_k$ can only meet $\Qcal$ at $Q_x$ where $x\in C_{b_k}- x_{k-1}$. Then $E(\Qcal) \triangle \bigcup_{k \in \mathbb{N}} E(W_k)$ contains a linkage of $C$.
\end{proof}

By ignoring the direction of the edges of $T$ and adding a leaf to each vertex in $B_0$, we can present $M_L(T, B_0)$ as a transversal matroid \cite{ALM}. Thus, not every transversal matroid is dual to a gammoid. 
It is also possible to prove that any $R^A$-free strict gammoid is dual to a gammoid. In fact, we expect more. 

\begin{con}\label{con:closedDuality}
The class of $C^A$-free gammoids is closed under duality.
\end{con}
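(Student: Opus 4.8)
The plan is to reduce to strict gammoids using \autoref{thm:legalDuality} and then to supply two ingredients that are not in the excerpt: that the dual of a $C^A$-free strict gammoid is a $C^A$-free gammoid, and that the class of $C^A$-free gammoids is closed under minors. Concretely, let $M$ be a $C^A$-free gammoid, so it has a $C^A$-free presentation $(D,B_0)$ and, by \autoref{thm:ACfree}(ii), $N:=M_L(D,B_0)$ is a $C^A$-free strict gammoid with $M=N\restric X$ for some $X\subseteq V(D)$. Then $M^*=N^*/(V(D)\setminus X)$ is a contraction minor of $N^*$, so it suffices to prove (a) $N^*$ is a $C^A$-free gammoid and (b) every minor of a $C^A$-free gammoid is a $C^A$-free gammoid (for which it is enough to treat minors of $C^A$-free strict gammoids, since any $C^A$-free gammoid is a restriction of one).

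For (a), \autoref{thm:legalDuality}(i) already identifies $N^*$ with the path-transversal matroid $M_{PT}(D^\star_{B_0},m_0)$ of the converted bimaze $(G,m_0):=(D,B_0)^\star$. What is missing is a $C^A$-free dimaze presenting this matroid. The intended construction mimics the Ingleton--Piff presentation of a transversal matroid as a gammoid, but carried out relative to the identity matching $m_0$: one ``unfolds'' $(G,m_0)$ on the vertex set $V\cup W$, orienting each $m_0$-edge one way and every other edge the other, and declaring an appropriate set of sinks to be the exits. A set $C\subseteq V$ should then be linkable in this dimaze exactly when it is $m_0$-matchable in $(G,m_0)$, because an $m_0$-matching is precisely a union of vertex-disjoint finite alternating $m_0$/$m$-paths, and these become the directed paths to the exits after unfolding. (When $(D,B_0)$ is additionally $R^I$-free one may instead invoke \autoref{thm:ARIRduality}, so that $N^*=M_T(D^\star_{B_0})$ is genuinely transversal; but the obvious gammoid presentation of a transversal matroid, orienting all edges from $V$ into $W$, need not be $C^A$-free, so a normalization is required even then.) The crucial point is to verify that the unfolded dimaze — or a suitable normalization of it — is $C^A$-free: a subdivided alternating ray there would have to be pulled back, by $\cev{\Qcal}$-style alternating-walk arguments as in \autoref{sec:minor}, to a forbidden alternating ray already present in $(D,B_0)$.

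For (b) one would develop the $C^A$-analogue of the shifting machinery of \autoref{sec:minor}. Given a $C^A$-free strict gammoid $M_L(D,B_0)$ and a minor $M_L(D,B_0)/S\setminus R$, \autoref{thm:wlogContractIndependent} lets us assume $S$ independent and $R$ coindependent; extending $S$ inside $B_0$ to a base gives a linkage $\Qcal$ from $S$ onto a set $T$ with the minimality needed in \autoref{thm:S_shiftedToExits}, and the task is to prove the $C^A$-versions of \autoref{thm:linkableInDB02linkableInShifted_noOC} and \autoref{thm:S_shiftedToExits}, namely that the $\Qcal$-shifted dimaze $(D_1,B_1)$ still satisfies $M_L(D_1,B_1)=M_L(D,B_0)$ and remains $C^A$-free. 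Granting this, $M_L(D,B_0)/S\setminus R=M_L(D_1-S,B_1\setminus S)\setminus R$ has a $C^A$-free presentation, exactly as in the proof of \autoref{thm:gammoidOCMinors}. Combining (a) and (b) then finishes the argument: $M^*$ is a minor of the $C^A$-free gammoid $N^*$, hence a $C^A$-free gammoid.

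I expect the main obstacle to be the control of $C^A$-freeness under both operations — passing to the dual presentation in (a) and shifting along a linkage in (b). Every structural lemma in \autoref{sec:minor} that keeps a shifted dimaze comb-free is stated for outgoing combs ($C^O$), not for alternating rays ($C^A$), and shifting along a linkage can genuinely create a subdivision of $C^A$ — this is precisely what occurs in the proof of \autoref{thm:infinitetree-legal}, which is why that example is excluded from the conjectured class. The heart of any proof will therefore be a lemma isolating a canonical, shift-stable form of a $C^A$-free dimaze (or, alternatively, a self-dual description of $C^A$-free gammoids by forbidden configurations in their duals); once such a normal form is available, the duality bookkeeping of \autoref{sec:duality} together with \autoref{thm:legalDuality} should close the gap.
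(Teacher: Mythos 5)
First, note that the statement you are proving is \autoref{con:closedDuality}, which the paper states as an \emph{open conjecture}: the authors offer no proof, so there is no argument of theirs to compare yours against, only the surrounding partial results. Your reduction is the natural one and is sound as far as it goes: writing $M=N|X$ for a $C^A$-free strict gammoid $N=M_L(D,B_0)$ gives $M^*=N^*/(V(D)\setminus X)$, so it would indeed suffice to know (a) that $N^*$ is a $C^A$-free gammoid and (b) that the class of $C^A$-free gammoids is minor-closed.

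However, neither (a) nor (b) is proved, in your proposal or in the paper, and together they constitute essentially the entire content of the conjecture rather than a route to it. For (a), \autoref{thm:legalDuality}(i) only identifies $N^*$ as a path-transversal matroid; your proposed ``unfolding'' of the bimaze $(D,B_0)^\star$ into a dimaze on $V\cup W$ is left unspecified exactly at the decisive point, namely why the resulting dimaze (or any normalization of it) is $C^A$-free and why linkability there coincides with $m_0$-matchability. This is not a routine verification: the simplest test case is the strict gammoid of \autoref{thm:ARnotCotransversal}, which is $C^A$-free via the incoming-comb presentation of \autoref{fig:ARIC} and whose dual is path-transversal but not transversal; the paper does not even record whether that dual is a $C^A$-free gammoid. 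For (b), you correctly observe that every preservation lemma of \autoref{sec:minor} (\autoref{thm:linkableInDB02linkableInShifted_noOC}, \autoref{thm:noFanInShifted}) controls $C^O$ and $F^\infty$ rather than $C^A$, but you supply no substitute. Worse, a $C^A$-free dimaze may well contain outgoing combs (e.g.\ $C^O$ itself is $C^A$-free), so even the identity $M_L(D_1,B_1)=M_L(D,B_0)$ of \autoref{thm:S_shiftedToExits} is unavailable in your setting, since that proposition assumes $C^O$-freeness of $(D,B_0)$. In short, what you have written is a correct framing of the problem together with an honest list of the obstacles; the two missing ingredients you name --- a shift-stable normal form for $C^A$-free presentations and a $C^A$-free gammoid presentation of a path-transversal matroid --- are not gaps in an otherwise complete argument but the open problem itself.
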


\section{Open problems}
\label{sec:problems}
For easy reference for open problems, we list the following: Questions \ref{que:cotransversalACIR} and \ref{que:ACclt}, Conjectures \ref{con:legal_I3} and \ref{con:closedDuality}. A general closure problem is to find a reasonable superclass of gammoids that is closed under duality and taking minors. A possible first step is to identify the duals of strict topological gammoids, which is an interesting problem on its own. 

In \autoref{sec:duality}, we saw that a finitary transversal matroid has a unique maximal presentation. On the other hand, minimal presentations \cite{BW71} are presentations in which the removal of any edge changes the transversal system. It is easy to see that any finitary transversal matroid has a minimal presentation.  
Indeed, let $M=M_T(G)$ be a finitary transversal matroid where $G=(V,W)$ is left locally finite. Then for every finite subset $F$ of $V$, there are finitely many minimal presentations of $M \upharpoonright F$. By compactness, there is a minimal presentation for the whole of $M$. However, it is open whether any infinite transversal matroid has one.
\begin{que}
Does every transversal matroid admit a minimal presentation? 
\end{que}

\end{document}